\newtheorem{theorem}{Theorem}[section]
\newtheorem{lemma}[theorem]{Lemma}
\newtheorem{proposition}[theorem]{Proposition}
\newtheorem{corollary}[theorem]{Corollary}
\newtheorem{conjecture}[theorem]{Conjecture}
\theoremstyle{definition}
\newtheorem{definition}[theorem]{Definition}
\newtheorem{notation}[theorem]{Notation}
\newtheorem{remark}[theorem]{Remark}
\newtheorem{example}[theorem]{Example}
\newtheorem*{theorem*}{Theorem}
\newtheorem*{corollary*}{Corollary}
\newtheorem*{conjecture*}{Conjecture}
\newtheorem*{question*}{Question}
\numberwithin{equation}{section}
\newtheorem{question}[theorem]{Question}
\newcommand{\newterm}{\textsf}
\newcommand{\dbcoh}[1]{\operatorname{D}^{\operatorname{b}}(\operatorname{coh }#1)}
\newcommand{\dbmod}[1]{\operatorname{D}^{\operatorname{b}}(\operatorname{mod }#1)}
\newcommand{\dabs}[1]{\operatorname{D}^{\operatorname{abs}}[#1]}
\newcommand{\Hom}{\operatorname{Hom}}
\newcommand{\Z}{\mathbb{Z}}
\newcommand{\R}{\mathbb{R}}
\newcommand{\C}{\mathbb{C}}
\newcommand{\Q}{\mathbb{Q}}
\newcommand{\A}{\mathbb{A}}
\def\O{\mathcal{O}}
\def\P{\mathbb{P}}
\newcommand{\End}{\operatorname{End}}
\newcommand{\spec}{\operatorname{Spec}}
\newcommand{\Rhom}{\operatorname{RHom}}
\newcommand{\Rhomi}[1]{\operatorname{R}^{#1}\operatorname{Hom}}
\newcommand{\Rshom}{\operatorname{R}\mathcal{H}\mathit{om}}
\newcommand{\Ext}{\operatorname{Ext}}
\newcommand{\gldim}{\operatorname{gl}\dim}
\newcommand{\tot}{\operatorname{tot}}
\newcommand{\cone}{\operatorname{Cone}}
\newcommand{\E}{\mathcal{E}}
\newcommand{\Fact}[1]{\operatorname{Fact }(#1)}
\newcommand{\fact}[1]{\operatorname{fact }(#1)}
\newcommand{\conv}{\operatorname{Conv}}
\title[Towards NCCRs of affine toric Gorenstein varieties]{Towards non-commutative crepant resolutions of affine toric Gorenstein varieties}
\author{Aimeric Malter \textsuperscript{1}}
\email{aimericmalter@bimsa.cn}
\author{Artan Sheshmani \textsuperscript{1,2}}
\email{artan@mit.edu}
\address{\textsuperscript{1}Beijing Institute of Mathematical Sciences and Applications, No. 544, Hefangkou Village, Huaibei Town, Huairou District, Beijing 101408}
\address{\textsuperscript{2}Massachusetts Institute of Technology, IAiFi Institute, 77 Massachusetts Ave, 26-555. Cambridge, MA 02139}
\begin{document}

\bibliographystyle{alpha}
\begin{abstract}
In this paper we prove a common generalisation of results by \v{S}penko-Van den Bergh \cite{SVdBtoricII} and Iyama-Wemyss \cite{IW14b} that can be used to generate non-commutative crepant resolutions (NCCRs) of some affine toric Gorenstein varieties.
We use and generalise results by Novakovi\'{c} to study NCCRs for affine toric Gorenstein varieties associated to cones over polytopes with interior points. As a special case, we consider the case where the polytope is reflexive with $\le \dim P+2$ vertices, using results of Borisov and Hua \cite{BH09} to show the existence of NCCRs.
\end{abstract}
\maketitle

\section{Introduction}

The study of singularities and their associated resolutions has been a central topic of research in algebraic geometry. Oftentimes, resolving singularities will yield more complicated objects than one started with. For instance, resolving a singularity of an affine scheme rarely results in another affine scheme. Geometrically speaking, the approach to deal with this is to simply widen the scope of objects we consider. Algebraically, another approach has emerged, which is to consider non-commutative objects as resolutions. The underlying philosophy is to treat algebraic objects, oftentimes triangulated categories or sheaves of rings, as if they came from some underlying geometric object (even in the absence thereof). The following conjecture, presented independently by Bondal-Orlov \cite{BO02} and Kawamata \cite{Kaw02} at the ICM 2002, is central to this philosophy.
\begin{conjecture*}[\cite{BO02},\cite{Kaw02}]
    \label{Conj:BondalOrlovKawamata}
    Assume $X$ is a normal algebraic variety with Gorenstein singularities and $\pi:Y_i\rightarrow X$ for $i=1,2$ are two crepant resolutions (by schemes or DM-stacks). Then there is an equivalence of triangulated categories
   $F:\dbcoh{Y_1}\simeq \dbcoh{Y_2}$, linear over $X$.
\end{conjecture*}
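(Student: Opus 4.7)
The plan is to pursue the standard two-pronged strategy that has guided work on this conjecture since Bridgeland and Van den Bergh. First, I would invoke Kawamata's theorem that any two birational minimal models over $X$ are connected by a sequence of flops, which reduces the problem to constructing an $X$-linear derived equivalence across a single elementary flop $Y\dashrightarrow Y'$.

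For such an elementary flop, the approach is to interpose a non-commutative ``roof'': produce an algebra $A$ (an NCCR of $X$, in the sense of Van den Bergh) together with tilting complexes $T\in \dbcoh{Y}$ and $T'\in\dbcoh{Y'}$ whose endomorphism algebras are both Morita equivalent to $A$. This yields $$\dbcoh{Y}\simeq \dbmod{A}\simeq \dbcoh{Y'},$$ and the composite is $X$-linear by construction, since $T$ and $T'$ are defined relative to $X$. In the toric Gorenstein setting of the paper, one hopes to get $A$ either as a module of covariants à la \v{S}penko--Van den Bergh or as the endomorphism algebra of a cluster tilting object à la Iyama--Wemyss, and then to recover each $Y_i$ as a GIT quotient or moduli space of quiver representations of $\spec A$ for an appropriate stability parameter; this step is a standard manipulation once $A$ is in hand.

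The main obstacle is precisely the existence of the NCCR $A$: it is not guaranteed for arbitrary Gorenstein varieties and is known to fail, for instance, for some terminal singularities in dimension $\ge 4$. Thus the strategy above closes the conjecture only when an NCCR can be verified to exist, which is exactly the content the present paper is after in the toric Gorenstein case. If no NCCR is available, the fallback is a direct Fourier--Mukai construction: take an appropriate kernel $\E$ on the fibre product $Y_1\times_X Y_2$ (e.g.\ a twist of $\O_{Y_1\times_X Y_2}$, or a perverse point sheaf in Bridgeland's dimension-three argument), and verify full faithfulness on a spanning class via a $\Rhom$ computation, using the crepancy $\pi_i^*\omega_X\simeq \omega_{Y_i}$ to match Serre functors so that essential surjectivity follows. $X$-linearity is automatic as the kernel is supported on the fibre product over $X$. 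The serious difficulty with this fallback, beyond dimension three, is the absence of a canonical replacement for Bridgeland's moduli of perverse point sheaves, which is why reducing the problem to NCCRs remains the most promising route.
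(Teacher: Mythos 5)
This statement is an open conjecture (due to Bondal--Orlov and Kawamata), presented in the paper purely as motivation; the paper neither proves it nor claims to. There is therefore no ``paper's own proof'' to compare against, and no proof proposal could be judged correct, since the statement remains unproven in general. Your write-up is in fact a reasonable survey of the standard attack (reduce to a single flop via Kawamata's connectedness of minimal models, then interpose an NCCR roof or, failing that, a Fourier--Mukai kernel on the fibre product), and you are candid that each branch has a genuine gap: NCCRs need not exist for general Gorenstein $X$, and beyond dimension three there is no canonical moduli of perverse point sheaves to drive the Fourier--Mukai argument. But a candid account of why the known approaches do not close the problem is not a proof. If the assignment was to prove this statement, the correct response is that it cannot currently be done; if the assignment was to describe the paper's argument, the correct response is that the paper offers none and instead uses the conjecture to motivate the study of NCCRs for affine Gorenstein toric varieties, which is the actual content of the paper (Theorem \ref{Prop:Parttiltingworks} and its corollaries).
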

Van den Bergh \cite{VdB3Dflops} considered the above conjecture for the case that $X$ has dimension 3 and $\pi_1, \pi_2$ form a flop. Assume, for simplicity, that $X=\spec R$ is affine. Then Van den Bergh considers tilting bundles $\mathcal{T}_i$ on $Y_i$, with corresponding endomorphism rings $\Lambda_i$. The Conjecture follows from proving the equivalences\[
\dbcoh{Y_1}\simeq\dbmod{\Lambda_1^\circ},\quad \dbcoh{Y_2}\simeq \dbmod{\Lambda_2}, \quad \Lambda_1^\circ\stackrel{\text{Morita}}{\simeq}\Lambda_2,
\]
where $(-)^\circ$ denotes right modules (and all other modules are left modules).
Thus, Van den Bergh finds that the two crepant resolutions $Y_1, Y_2$ are both in fact derived equivalent to the same non-commutative ring (which is either $\Lambda_1^\circ$ or $\Lambda_2$). This motivates the idea to think of this non-commutative ring as a third crepant resolution of $X$, albeit a non-commutative one. In \cite{VdB04}, Van den Bergh introduces the notion of \newterm{non-commutative (crepant) resolution}, abbreviated as NC(C)R, to generalise this phenomenon. 
Van den Bergh builds on the above Conjecture by Bondal-Orlov and Kawamata, and proposes the following.
\begin{conjecture*}[Conjecture 2.7 in \cite{VdB04}]
    \label{Conj:VdBdereq}
    All crepant resolutions of $X$ (commutative as well as non-commutative) are derived equivalent.
\end{conjecture*}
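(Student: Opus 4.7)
The plan is to follow the template established by Van den Bergh for three-dimensional flops, generalised to the higher-dimensional toric Gorenstein setting the paper works in. First I would reduce to the affine case $X=\spec R$ by working locally. For each commutative crepant resolution $\pi_i:Y_i\to X$, the aim is to produce a tilting bundle $\mathcal{T}_i$ on $Y_i$ whose endomorphism ring $\Lambda_i=\End_{Y_i}(\mathcal{T}_i)$ is a non-commutative crepant resolution of $R$; this gives derived equivalences $\dbcoh{Y_i}\simeq\dbmod{\Lambda_i}$. For a non-commutative crepant resolution $\Lambda=\End_R(M)$, the statement is already tautological on one side. The conjecture then reduces to proving that any two NCCRs $\End_R(M_1)$ and $\End_R(M_2)$ of the same Gorenstein ring $R$ are derived equivalent.

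For the latter reduction I would attempt a mutation-based argument in the spirit of Iyama--Wemyss. The idea is to pass from $M_1$ to $M_2$ by a finite sequence of simple summand exchanges $M\rightsquigarrow M'$, each producing a tilting $\End_R(M)$-$\End_R(M')$ bimodule through a minimal right approximation of the removed summand. Composition of these tilting bimodules then gives the desired equivalence $\dbmod{\End_R(M_1)}\simeq\dbmod{\End_R(M_2)}$. In the toric Gorenstein setting of this paper, the combinatorial input from the polytope, together with the constructions of \v{S}penko--Van den Bergh and the generalisation of Novakovi\'c's results to be developed later, should make the relevant summand modules explicit and the mutation graph tractable.

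The main obstacle is twofold. First, the existence of tilting bundles on commutative crepant resolutions is not known in general beyond dimension three, so the reduction step already requires the full force of the toric hypothesis; one must exploit the polytope structure, and in particular the interior-point condition, to construct the bundles $\mathcal{T}_i$. Second, even after reducing to NCCRs, the connectedness of the mutation graph is genuinely open in dimension $\ge 4$, so one may be forced to argue equivalence by a different mechanism --- for instance by producing a common tilting refinement, or by using the explicit Borisov--Hua description in the reflexive case with at most $\dim P+2$ vertices --- rather than by a direct chain of mutations. In other words, a full proof of the Conjecture in the generality stated is almost certainly out of reach; the realistic target is to verify it inside the combinatorially controlled subclass the paper isolates.
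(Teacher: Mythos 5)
This statement is Van den Bergh's Conjecture~2.7 from \cite{VdB04}, which the paper cites as motivation and does \emph{not} prove --- there is no proof in the paper to compare your attempt against. The conjecture remains open, and the paper's contribution is to make progress on the adjacent existence question (whether affine Gorenstein toric varieties admit NCCRs at all), not on the derived-equivalence question. Your proposal correctly recognises this in its final paragraph, where you observe that a full proof in the stated generality is out of reach and that the connectedness of the mutation graph is open in dimension $\ge 4$.

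A few remarks on the substance of your sketch, since it is a reasonable survey of the known strategy. The reduction to the affine case and the construction of tilting bundles on commutative crepant resolutions is indeed the route Van den Bergh took for threefold flops, and the bimodule-mutation picture is the Iyama--Wemyss approach. However, be careful about the claim that derived equivalence of $Y_i$ with an NCCR $\Lambda_i$ plus derived equivalence of all NCCRs with each other would close the argument: in dimension $\ge 4$ it is not known (and in some non-toric settings it fails) that every commutative crepant resolution carries a tilting bundle, so the first leg of the reduction is itself conjectural, independently of mutation connectedness. Also, not every affine Gorenstein variety admits a commutative crepant resolution at all, so the conjecture must be read as quantifying over whatever crepant resolutions happen to exist; your phrasing suggests one always starts from a commutative one, which is not assumed. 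Finally, the Borisov--Hua machinery invoked in the paper (\S\ref{sec:refl}) is used there to \emph{construct} an NCCR in the reflexive case with $\le n+2$ vertices, not to compare two given resolutions, so it does not directly yield the pairwise derived equivalences your last paragraph gestures at. None of this is a flaw in your reasoning so much as confirmation that the statement is a genuine conjecture, which is exactly how the paper treats it.
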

Given a normal Noetherian domain $R$, we call an $R$-algebra $\Lambda$ a non-commutative resolution of $R$ if $\Lambda=\End_R(M)$ for a non-zero finitely generated reflexive $R$-module $M$ such that $\operatorname{gl}\dim\Lambda<\infty$. If in addition $R$ is Gorenstein and $\Lambda$ is maximal Cohen-Macaulay as $R$-module, we call the resolution crepant. A lot of work has been done relating to NCCRs, ranging from constructions thereof (see e.g. work of \v{S}penko and Van den Bergh \cite{SVdB17}), to using the construction of NCCRs to understand deeper problems in mathematics (e.g. Rennemo and Segal \cite{RS19} using NCCRs to study the Pfaffian correspondence and construct window equivalences, establishing a form of homological projective duality proposed by Hori).   
 
However, the existence of NCCRs is not guaranteed, and it remains an interesting question as to when they exist. Even in some of the most natural cases to consider, the existence of NCCRs proved difficult to exhibit, and so the following Conjecture remains open.
\begin{conjecture*}
    An affine Gorenstein toric variety always has an NCCR.
\end{conjecture*}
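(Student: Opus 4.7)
The plan is to produce, for an arbitrary affine toric Gorenstein variety $X=\spec R$, a reflexive $R$-module $T$ whose endomorphism algebra $\Lambda=\End_R(T)$ is maximal Cohen--Macaulay and of finite global dimension. The combinatorial setup I would fix first is the standard one: write the defining cone as $\sigma=\conv(\{0\}\cup P)$ for a lattice polytope $P$, with the Gorenstein condition realised by the primitive ray generators of $\sigma$ lying on a common affine hyperplane at height one. Then $R=k[\sigma^\vee\cap M]$ is $\mathbb Z^d$-graded, and every candidate for $T$ I would consider is a direct sum of divisorial (rank one, reflexive) modules $R(\chi)$ indexed by characters $\chi$ in a finite set $S\subset M_{\mathbb R}$.

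Next I would construct the candidate NCCR by choosing $S$ to be the lattice points of a convex body $\delta\subset M_{\mathbb R}$ designed to be compatible with the fan. When $\delta$ can be chosen \emph{generic} in the sense of \v{S}penko--Van den Bergh, their theorem immediately yields that $\Lambda=\End_R(\bigoplus_{\chi\in S}R(\chi))$ is an NCCR. When this genericity fails---and this is the essential source of difficulty in the conjecture---I would propose a two-step reduction. First, following the philosophy of Iyama--Wemyss, try to produce an NCCR by iterated mutation of a partial modifying module, provided one can initialise the mutation with a non-trivial such module at all. Second, for toric varieties whose polytope admits extra structure (for instance reflexivity together with a small vertex count, as in the setting treated by Borisov--Hua), read off a tilting bundle from the geometry of a crepant resolution of $X$ and then transfer its endomorphism algebra to the singular base via pushforward.

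The global architecture would then be to reduce the conjecture to a finite list of combinatorial ``bad configurations''---polytopes where no generic $\delta$ exists---and for each such configuration produce an NCCR explicitly, using mutation or the additional symmetry coming from reflexivity. The MCM property of $\Lambda$ in each case reduces to the observation that $\Hom_R(R(\chi),R(\chi'))=R(\chi'-\chi)$ is MCM, so reflexivity of the summands and closure of $S$ under the needed translations guarantee this step automatically. Finite global dimension, by contrast, is genuinely not formal; it is typically established by exhibiting a bounded projective resolution of each simple $\Lambda$-module, and this is where the combinatorics of $\delta$ intervenes directly, through the resolution of the diagonal of the tile by twists of Koszul-type complexes.

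I expect the main obstacle to be exactly the construction of $\delta$ (or an equivalent modifying module) in the non-generic regime. All the auxiliary properties---reflexivity, MCM, $\mathbb Z^d$-graded structure---are essentially automatic from the toric setup, but the finite global dimension requirement forces a delicate compatibility between $S$ and the simplicial structure of the normal fan of $\sigma$. My plan would therefore be to first prove the conjecture modulo a purely combinatorial statement about the existence of such $\delta$, and only then attempt that combinatorial problem, possibly by a reflexivity-type induction on the number of vertices. This strategy is what explains the word ``Towards'' in the title: partial progress is expected from the common generalisation of \v{S}penko--Van den Bergh and Iyama--Wemyss announced in the abstract, while a complete proof awaits a resolution of the genericity gap.
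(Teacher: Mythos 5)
The statement you are attempting is stated in the paper as a \emph{Conjecture}, not a theorem, and the paper does not prove it; the entire contribution of the paper is partial progress towards it. Your proposal implicitly acknowledges this in its final sentence, which is appropriate. However, the \emph{strategy} you sketch for attacking the conjecture is genuinely different from the one the paper pursues, and it is worth spelling out where the two diverge.

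Your architecture is essentially that of \v{S}penko--Van den Bergh's quasi-symmetric paper \cite{SVdB17}: pick a finite set of characters $S$ cut out by a convex body $\delta$, take $T=\bigoplus_{\chi\in S}R(\chi)$, and read the MCM property off the structure of divisorial modules, leaving finite global dimension to a ``genericity'' statement about $\delta$ verified by Koszul-type resolutions; the non-generic cases are to be mopped up by Iyama--Wemyss mutation or by the reflexive/small-rank theory of Borisov--Hua. This is a plausible program, but it is not what the paper does. The paper's core result, Theorem \ref{Prop:Parttiltingworks}, is a descent statement: a \emph{partial tilting complex} on the toric DM stack $\mathcal{X}_\Sigma$ of a regular triangulation, with endomorphism algebra of finite global dimension, already yields an NCCR of $R$. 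This is proved via the relative Serre functor of Lemma \ref{Lem:RelSerFun} (to get the MCM property) combined with the Iyama--Wemyss localisation argument at height-one primes. The paper then never constructs $T$ directly as a list of lattice points in a box; instead it produces tilting complexes geometrically, by writing cones over polytopes with an interior point as supports of $\tot\omega_X$ and pulling tilting complexes up along the affine bundle projection (Theorems \ref{Thm:NovaGeneralCplx}, \ref{Thm:NovaDMstack}), using Novakovi\'c-style cohomology vanishing. The proposed global reduction is also different from yours: the paper invokes the Hibi--Meyer fact (Proposition \ref{Prop:FaceOfReflexive}) that every lattice polytope is a face of a reflexive polytope, and proposes the triple consisting of Conjecture \ref{Conj:FanoVanishingII}, Conjecture \ref{Conj:FaceGivesNCCR}, and Question \ref{Qn:FanoSimplHasTilting} as the combinatorial input that would close the gap; the new unconditional ingredient is the $\le n+2$ vertex reflexive case (Theorem \ref{Thm:FanoAlmostSimplicial}), proved by analysing Borisov--Hua's forbidden cones.

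In terms of trade-offs: your lattice-point/box approach is computationally concrete and, where genericity holds, gives an explicit module $T$ immediately; but as you note, the ``bad configurations'' form the whole difficulty and you have no uniform handle on them. The paper's approach is less explicit (the NCCR module appears only as global sections of a tilting complex on a stack), but it decouples the problem into (i) a descent theorem that is proved unconditionally, (ii) a cohomology-vanishing statement on Fano stacks, and (iii) a face-restriction conjecture, and each of (ii) and (iii) has some structure one can exploit, rather than a vague ``finite list of bad cases.'' You might also note that your proposed MCM argument --- that $\Hom_R(R(\chi),R(\chi'))=R(\chi'-\chi)$ is automatically MCM --- is not correct in general; MCM-ness of the endomorphism ring is precisely what the relative Serre functor argument in Theorem \ref{Prop:Parttiltingworks} is designed to establish, and it uses the Gorenstein geometry of the refinement in an essential way.
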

This conjecture has previously been proven in certain cases, e.g. for dimension $\le 3$ (\cite{Broomhead, SVdBtoricII}), for quasi-symmetric GIT quotients for tori \cite{SVdB17} and for toric varieties associated to simplicial cones \cite{FMS19, BBB+}.
Many of these results rely on exhibiting tilting bundles on appropriate varieties or stacks, and indeed tilting theory is intrinsically linked to NCCRs. Iyama and Wemyss \cite{IW14b} prove (see Theorem \ref{Thm;Cor4.15IW}) that a tilting complex on a Gorenstein variety $Y$ mapping to $\spec R$ generates an NCCR if the map is projective, birational and crepant. In the case of affine Gorenstein toric varieties, we can assume that the variety is associated to a cone $\sigma=\cone(P\times \{1\})$ for some polytope $P$. Then, \v{S}penko and van den Bergh prove the existence of an NCCR if the Cox stack associated to a simplicialisation $\Sigma$ of $\sigma$ not introducing additional rays admits has a tilting bundle. Our main result in this paper is a generalisation of both these results.

\begin{theorem*}[Theorem \ref{Prop:Parttiltingworks}]
    Choose a regular triangulation of $P$ and let $\Sigma$ be the corresponding fan refining $\sigma$. Let $\mathcal{T}$ be a partial tilting complex on $\mathcal{X}_{\Sigma}$, the associated toric DM stack. Assume that $\Lambda=\End_{\mathcal{X}_{\Sigma}}(\mathcal{T})$ has finite global dimension. Then it is an NCCR for $R=k[\sigma^\vee\cap M]$.
\end{theorem*}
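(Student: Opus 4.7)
The plan is to verify, in the partial-tilting and DM-stack context, the hypotheses of the Iyama--Wemyss criterion (Theorem~\ref{Thm;Cor4.15IW}). The geometry produced by a regular triangulation supplies the correct ambient morphism, while the finite global dimension assumption plays the role that full tiltingness plays in the classical statement.

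First I would verify that the induced morphism $\pi : \mathcal{X}_\Sigma \to \spec R$ is projective, birational, and crepant. Projectivity is essentially built into the definition of a \emph{regular} triangulation: it is induced by a strictly convex piecewise-linear function, which provides $\pi$-ampleness. Birationality is automatic because refining a fan does not change the generic torus orbit. Crepancy is a consequence of the Gorenstein hypothesis encoded in $\sigma = \cone(P \times \{1\})$: all ray generators of $\Sigma$ correspond to lattice points of $P \times \{1\}$, so they lie on the height-one hyperplane, and the standard toric formula then identifies $\omega_{\mathcal{X}_\Sigma}$ with $\pi^\ast \omega_{\spec R}$. Since the triangulation is simplicial, every cone of $\Sigma$ is simplicial and $\mathcal{X}_\Sigma$ is a smooth (hence Gorenstein) toric DM stack.

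Next I would set $M := \mathbf{R}\pi_\ast \mathcal{T}$ and argue that $M$ is concentrated in degree zero, that it is reflexive and maximal Cohen--Macaulay as an $R$-module, and that $\End_R(M) \simeq \Lambda$. Vanishing of higher cohomology together with $\End_R(M) \simeq \Lambda$ should come from combining the partial-tilting vanishing $\Ext^{>0}_{\mathcal{X}_\Sigma}(\mathcal{T},\mathcal{T}) = 0$, the projection formula, and the vanishing theorems available for the projective morphism $\pi$ over the affine base $\spec R$. Reflexivity and the MCM property should then be extracted from Grothendieck--Serre duality on the smooth Gorenstein stack $\mathcal{X}_\Sigma$: crepancy of $\pi$ lets one identify $\Rhom_R(M, R)$ with $\mathbf{R}\pi_\ast \Rhom_{\mathcal{X}_\Sigma}(\mathcal{T}, \O_{\mathcal{X}_\Sigma})$, and the resulting depth estimates give the MCM conclusion. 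Finite global dimension of $\Lambda$ is given, and together these properties exhibit $\Lambda = \End_R(M)$ as an NCCR of $R$.

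The main obstacle I anticipate lies in the derived pushforward argument for $M$: in Iyama--Wemyss a full tilting complex supplies a derived equivalence which for free delivers $\End_R(\pi_\ast \mathcal{T}) \simeq \Lambda$, degree-zero concentration, and the MCM property. Under only partial tiltingness these must be recovered from the geometric inputs (smoothness, crepancy, the Gorenstein hypothesis) together with the finite global dimension assumption. A secondary technical issue is the passage from schemes to DM stacks, where one must ensure that the projection formula, Grothendieck duality, and reflexivity statements remain valid; the results of Novakovi\'{c} cited in the introduction should provide precisely this framework.
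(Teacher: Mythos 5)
Your approach has a genuine gap at exactly the place you flag. You propose to take $M = \mathbf{R}\pi_\ast \mathcal{T}$ and then show $\End_R(M)\simeq\Lambda$; but without a tilting equivalence there is no reason for $\Hom_R(\mathbf{R}\pi_\ast\mathcal{T},\mathbf{R}\pi_\ast\mathcal{T})$ to agree with $\mathbf{R}\pi_\ast\Rshom_{\mathcal{X}_\Sigma}(\mathcal{T},\mathcal{T})$, and in the partial-tilting situation this identification is precisely what fails. For the same reason you cannot literally invoke Theorem~\ref{Thm;Cor4.15IW}: its hypothesis is a derived equivalence with $\Lambda$, which a partial tilting complex does not give you. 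The point of Theorem~\ref{Prop:Parttiltingworks} is to replace that hypothesis, and your plan does not say how.

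The paper sidesteps your obstacle entirely by never constructing $M$ explicitly. Instead: (i) the Cohen--Macaulay property of $\Lambda$ follows directly from the relative Serre duality on $\mathcal{X}_\Sigma$ over $\spec R$ (Lemma~\ref{Lem:RelSerFun}), since $\Rhom_{\mathcal{X}_\Sigma}(\mathcal{T},\mathcal{T})\cong\Rhom_R(\Rhom_{\mathcal{X}_\Sigma}(\mathcal{T},\mathcal{T}),R)$ and partial tiltingness concentrates the left side in degree $0$, so $\Ext^{>0}_R(\Lambda,R)=0$; no pushforward module is needed. (ii) To show $\Lambda\cong\End_R(M)$ for some reflexive $M$, the paper uses the Auslander--Goldman criterion (Proposition~\ref{Prop:Prop4.4IW}): it suffices that $\Lambda$ is reflexive and Morita-trivial at all height-one primes. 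To check the latter, the paper passes to a possibly non-crepant smooth toric resolution $X_{\Sigma_\Delta}\to\spec R$ via the VGIT-produced fully faithful embedding $\dbcoh{\mathcal{X}_\Sigma}\hookrightarrow\dbcoh{\mathcal{X}_{\Sigma_\Delta}}=\dbcoh{X_{\Sigma_\Delta}}$, so that the scheme-level Lemma~\ref{Lem:4.3IW14} applies; then over a local ring $R_\mathfrak{p}$ of height one the resolution is an isomorphism and the only perfect complexes with no higher self-Ext are shifts of free modules, giving the Morita equivalence. Your crepancy-of-$\pi$ observation is correct but is not what carries the argument; the load-bearing pieces are the relative Serre functor and Auslander--Goldman, neither of which appears in your outline. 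A secondary point: the move to $X_{\Sigma_\Delta}$ through the factorization-category equivalences is needed because the Iyama--Wemyss localization lemma is a statement about schemes, not stacks, so one cannot stay on $\mathcal{X}_\Sigma$ when running the height-one analysis.
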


We consider the case  of $\sigma=\cone(P\times\{1\})$, where $\operatorname{Int}(P)$ contains a lattice point, examining some examples and describing a strategy to establish the existence of NCCRs in that case. As a corollary to the above theorem, we obtain the following result.
\begin{corollary*}[Corollary \ref{Cor:ParttiltCplxCanBdl}]
    Let $X$ be a simplicial projective toric variety such that $\mathcal{V}:=[\tot \omega_X]$ admits a partial tilting complex $\mathcal{T}$ with $\End_{\dbcoh{\mathcal{V}}}(\mathcal{T})$ of finite global dimension. Consider the cone $\sigma=|\Sigma_\mathcal{V}|$, where $\Sigma_\mathcal{V}$ is the fan associated to the toric vector bundle $\tot \omega_X$. Then $R=k[\sigma^\vee\cap M]$ has an NCCR.
\end{corollary*}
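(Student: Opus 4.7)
The plan is to reduce the corollary to Theorem \ref{Prop:Parttiltingworks} by unpacking $\mathcal{V} = [\tot \omega_X]$ in fan-theoretic terms, identifying the ambient Gorenstein cone, the polytope with interior lattice point, and the regular triangulation refining it.

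First, I would describe the fan $\Sigma_\mathcal{V}$ of $\tot \omega_X$. Writing $\Sigma_X$ for the fan of $X$ in $N_\R$ with primitive ray generators $v_1, \ldots, v_r$ and canonical divisor $K_X = -\sum D_i$, the standard construction of the total space of a line bundle places $\Sigma_\mathcal{V}$ in $N' := N \oplus \Z$ with rays $(v_i, 1)$ for $i = 1, \ldots, r$ together with $e := (0, 1)$, and with maximal cones $\cone(e, (v_{i_1}, 1), \ldots, (v_{i_n}, 1))$ in bijection with the maximal cones $\cone(v_{i_1}, \ldots, v_{i_n})$ of $\Sigma_X$. Since $X$ is simplicial, these cones are simplicial, so $\mathcal{V} = \mathcal{X}_{\Sigma_\mathcal{V}}$ is a smooth toric DM stack, matching the setup of the main theorem.

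Next, I would set $P := \conv(v_1, \ldots, v_r) \subset N_\R$ and observe that $\sigma = |\Sigma_\mathcal{V}| = \cone(P \times \{1\})$ in $N'_\R$. Completeness of $\Sigma_X$ (which follows from projectivity of $X$) guarantees $0 \in \operatorname{Int}(P)$, so $P$ carries the required interior lattice point, realized in $\Sigma_\mathcal{V}$ by the ray $e$. The primitive ray generators of $\sigma$ all lie on the hyperplane $\{z = 1\}$, so $\sigma$ is Gorenstein. The maximal cones of $\Sigma_\mathcal{V}$ then correspond exactly to the simplices $\conv(0, v_{i_1}, \ldots, v_{i_n})$ that form the triangulation $\mathcal{T}_P$ of $P$ obtained by starring $\Sigma_X$ from $0$. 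Regularity of $\mathcal{T}_P$ is where projectivity (and not mere completeness) of $X$ is used: any ample class on $X$ furnishes a strictly convex, $\Sigma_X$-linear, piecewise-linear function $\phi$ on $N_\R$, whose restriction to $P$ is a strictly convex PL function with domains of linearity exactly the simplices of $\mathcal{T}_P$, certifying regularity.

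With $P$, $\mathcal{T}_P$, and $\Sigma = \Sigma_\mathcal{V}$ identified, Theorem \ref{Prop:Parttiltingworks} applies: by hypothesis $\mathcal{T}$ is a partial tilting complex on $\mathcal{X}_\Sigma = \mathcal{V}$ with $\End_\mathcal{V}(\mathcal{T})$ of finite global dimension, so the theorem yields that this endomorphism algebra is an NCCR for $R = k[\sigma^\vee \cap M]$. The only non-formal ingredient in the whole argument is the regularity of $\mathcal{T}_P$; I expect this to be the main, though mild, obstacle, and I would handle it by exhibiting the lifted PL function explicitly. The remaining steps are a direct translation between $\tot \omega_X$ and its standard fan description.
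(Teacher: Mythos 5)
Your proposal is correct and follows essentially the same route as the paper's: identify $\sigma = \cone(P\times\{1\})$ with $P = \conv(v_i)$, observe that $\Sigma_\mathcal{V}$ (rays $(v_i,1)$ and $(0,1)$, cones lifted from $\Sigma_X$) gives a simplicial triangulation of $P$ refining $\sigma$, and then apply Theorem~\ref{Prop:Parttiltingworks}. Where you add genuine value is in making explicit the regularity of that triangulation via the strictly convex support function coming from an ample class on $X$ — the paper summarizes this step as a ``star subdivision'' and states the hypothesis as $X$ complete rather than projective (in the body), but as you correctly note, regularity (equivalently, semiprojectivity of $\tot\omega_X$, which Theorem~\ref{Prop:Parttiltingworks} requires) needs the ample class, i.e.\ projectivity, not mere completeness; and when $\Sigma_X$ is not literally the face fan of $P$, $\Sigma_\mathcal{V}$ is a refinement of the star subdivision rather than equal to it, so the automatic regularity of star subdivisions does not immediately apply and your explicit argument is the right way to close that small gap.
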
 

\noindent To find out when $\mathcal{V}$ admits an appropriate partial tilting complex of finite global dimension, we study and generalise results about tilting objects on global quotient stacks by Novakovi\'{c} \cite{Nova18}. 

In particular, we prove the following result. 
\begin{corollary*}[Corollary \ref{Cor:Nova+myresult}]
 Let $X_\Sigma$ be a projective simplicial toric variety and consider the cone $\sigma=|\Sigma_\mathcal{V}|$, where $\Sigma_\mathcal{V}$ is the fan associated to the toric vector bundle $\tot \omega_{X_\Sigma}$. Suppose $\mathcal{X}_{\Sigma}\cong [X/G]$ for some smooth, projective $X$ and a finite group $G$, that $\mathcal{T}$ is tilting on $[X/G]$ and that $[\tot \omega_{X_\Sigma}]=[\tot \mathcal{E}^\vee/G]$ for some $G$-equivariant vector bundle $\mathcal{E}$ on $X$. If $H^i(X,\mathcal{T}^\vee\otimes \mathcal{T}\otimes \operatorname{Sym}^l(\mathcal{E}))=0$ for $i\neq 0, l>0$, then $R=k[\sigma^\vee\cap M]$ has an NCCR.
\end{corollary*}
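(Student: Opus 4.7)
The plan is to produce a partial tilting complex on $\mathcal{V} := [\tot\omega_{X_\Sigma}]$ whose endomorphism algebra has finite global dimension, and then to invoke Corollary~\ref{Cor:ParttiltCplxCanBdl}. With $\pi : \mathcal{V} = [\tot\mathcal{E}^\vee/G] \to [X/G] = \mathcal{X}_{\Sigma}$ denoting the bundle projection, the natural candidate is $\tilde{\mathcal{T}} := \pi^*\mathcal{T}$.

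First I would compute self-extensions of $\tilde{\mathcal{T}}$ on $\mathcal{V}$. Because $\pi$ is affine with $\pi_*\mathcal{O}_{\tot\mathcal{E}^\vee} \cong \bigoplus_{l\geq 0}\operatorname{Sym}^l(\mathcal{E})$ as $G$-equivariant sheaves on $X$, the projection formula combined with the $\pi^*\dashv \pi_*$ adjunction gives
\begin{equation*}
    \Ext^i_\mathcal{V}(\tilde{\mathcal{T}},\tilde{\mathcal{T}})
    \;\cong\; \bigoplus_{l\geq 0} \Ext^i_{[X/G]}\bigl(\mathcal{T},\, \mathcal{T}\otimes\operatorname{Sym}^l(\mathcal{E})\bigr)
    \;\cong\; \bigoplus_{l\geq 0} H^i\bigl(X,\, \mathcal{T}^\vee\otimes\mathcal{T}\otimes\operatorname{Sym}^l(\mathcal{E})\bigr)^{G}.
\end{equation*}
The $l=0$ summand vanishes for $i\neq 0$ because $\mathcal{T}$ is tilting on $[X/G]$. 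For $l>0$, the hypothesis kills $H^i(X,\mathcal{T}^\vee\otimes\mathcal{T}\otimes\operatorname{Sym}^l(\mathcal{E}))$ for $i\neq 0$, hence also its $G$-invariants. Therefore $\tilde{\mathcal{T}}$ has no positive self-Exts on $\mathcal{V}$ and is a partial tilting complex there.

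To establish that $\Lambda := \End_\mathcal{V}(\tilde{\mathcal{T}})$ has finite global dimension, I would upgrade $\tilde{\mathcal{T}}$ to a full tilting object on the smooth DM stack $\mathcal{V}$. Since $\mathcal{T}$ classically generates $\dbcoh{[X/G]}$ and $\pi$ is affine, any $\mathcal{F}\in\dbcoh{\mathcal{V}}$ with $\Rhom_\mathcal{V}(\tilde{\mathcal{T}},\mathcal{F})=0$ satisfies $\Rhom_{[X/G]}(\mathcal{T},\pi_*\mathcal{F})=0$ by adjunction, forcing $\pi_*\mathcal{F}=0$ and hence $\mathcal{F}=0$ by conservativity of the affine pushforward on perfect complexes. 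With $\tilde{\mathcal{T}}$ tilting, the induced equivalence $\dbcoh{\mathcal{V}}\simeq\dbmod{\Lambda^{\mathrm{op}}}$ together with smoothness of $\mathcal{V}$ (the total space of a vector bundle over the smooth stack $[X/G]$) forces $\gldim\Lambda < \infty$. Corollary~\ref{Cor:ParttiltCplxCanBdl} applied to $X_\Sigma$ and the partial tilting complex $\tilde{\mathcal{T}}$ then produces an NCCR for $R = k[\sigma^\vee\cap M]$.

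The main obstacle I anticipate is the generation/finite-global-dimension step carried out on the quotient stack rather than on $X$: one must verify that the adjunction and conservativity arguments transport cleanly through $[X/G]$, and in particular that passing to $G$-invariants is compatible with computing $\Ext$ as used above. If the stacky version of the orthogonality argument proves delicate, a fallback would be to analyse $\Lambda = \bigoplus_{l\geq 0}\Hom_{[X/G]}(\mathcal{T},\mathcal{T}\otimes\operatorname{Sym}^l(\mathcal{E}))$ directly as a graded algebra over $\Lambda_0 = \End_{[X/G]}(\mathcal{T})$ and to bound its global dimension using the finite global dimension of $\Lambda_0$ together with the symmetric-algebra structure inherited from $\mathcal{E}$.
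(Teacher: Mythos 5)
Your proposal is correct and takes essentially the same route as the paper: you reprove the content of Theorem~\ref{Thm:NovaGeneralCplx} (that $\pi^*\mathcal{T}$ is tilting on $[\tot\mathcal{E}^\vee/G]$, via the affine adjunction, projection formula, and exactness of $G$-invariants in characteristic zero) and then feed the resulting tilting complex into the canonical-bundle corollary; the paper simply cites Theorem~\ref{Thm:NovaGeneralCplx} together with Corollary~\ref{Cor:TiltCplxworks}/\ref{Cor:TiltCplxCanbdl} rather than redoing the computation. The caveat you flag at the end is not in fact an obstacle: the $G$-invariants functor is exact for a finite group over a field of characteristic zero, so $\Ext^i_{[X/G]}(-,-)\cong\Ext^i_X(-,-)^G$ and the adjunction/conservativity arguments transfer to $[X/G]$ with no friction, as your own middle paragraph already shows.
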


\noindent Since we cannot always assume that $\mathcal{X}_\Sigma$ is a global quotient stack of a smooth projective variety, we consider the case where $\Sigma$ is merely a simplicial fan and prove the following result.

\begin{theorem*}[Theorem \ref{Thm:NovaDMstack}]
   Let $\Sigma$ be a simplicial fan such that $\mathcal{X}_\Sigma$ has a tilting complex $\mathcal{T}$. Let $p:\tot V\rightarrow X_\Sigma$ be a toric vector bundle on $X_\Sigma$ with fan $\mathcal{V}$. Let $f_\Sigma:\mathcal{X}_\Sigma\rightarrow X_\Sigma$ be the good moduli space. If $H^i(\mathcal{X}_\Sigma, \mathcal{T}^\vee\otimes\mathcal{T}\otimes \operatorname{Sym}^\bullet(f_\Sigma^\ast V^\vee))=0$ for all $i\neq 0$, then there is a tilting complex on $\mathcal{X}_{\mathcal{V}}$. If $V=\omega_{X_{\Sigma}}$, then the ring $R=k[|\mathcal{V}|^\vee\cap M]$ has an NCCR.
\end{theorem*}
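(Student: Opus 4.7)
The candidate tilting complex on $\mathcal{X}_{\mathcal{V}}$ is $\pi^\ast\mathcal{T}$, where $\pi:\mathcal{X}_{\mathcal{V}}\rightarrow\mathcal{X}_\Sigma$ is the natural bundle projection. The key combinatorial observation is that the maximal cones of $\mathcal{V}$ are obtained from those of $\Sigma$ by adjoining primitive rays in the fibre direction, so each has exactly the same stabiliser as the corresponding maximal cone of $\Sigma$; hence $\mathcal{X}_{\mathcal{V}}$ is identified with the stacky total space $[\tot(f_\Sigma^\ast V)]$ over $\mathcal{X}_\Sigma$. In particular $\pi$ is flat and affine, with $\pi_\ast\mathcal{O}_{\mathcal{X}_{\mathcal{V}}}=\operatorname{Sym}^\bullet(f_\Sigma^\ast V^\vee)$.

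The Ext-vanishing of $\pi^\ast\mathcal{T}$ is then immediate from the projection formula and Leray:
\[
\operatorname{Ext}^i_{\mathcal{X}_{\mathcal{V}}}(\pi^\ast\mathcal{T},\pi^\ast\mathcal{T})=H^i\bigl(\mathcal{X}_\Sigma,\mathcal{T}^\vee\otimes\mathcal{T}\otimes\operatorname{Sym}^\bullet(f_\Sigma^\ast V^\vee)\bigr),
\]
which vanishes for $i\neq 0$ by hypothesis. Finite global dimension of $\operatorname{End}_{\mathcal{X}_{\mathcal{V}}}(\pi^\ast\mathcal{T})$ is automatic once generation is established, because $\mathcal{X}_{\mathcal{V}}$ is a smooth DM stack (the fan $\mathcal{V}$ being simplicial).

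The main obstacle is showing that $\pi^\ast\mathcal{T}$ generates $\dbcoh{\mathcal{X}_{\mathcal{V}}}$; this is the DM-stack analogue of the step already carried out for global quotients in Corollary \ref{Cor:Nova+myresult}. I would invoke the Koszul resolution of the zero section $\iota:\mathcal{X}_\Sigma\hookrightarrow\mathcal{X}_{\mathcal{V}}$, whose terms are $\pi^\ast\Lambda^k(f_\Sigma^\ast V^\vee)$ for $k=0,\ldots,\operatorname{rk}V$, combined with the projection-formula identity $\iota_\ast\mathcal{G}=\pi^\ast\mathcal{G}\otimes\iota_\ast\mathcal{O}_{\mathcal{X}_\Sigma}$, to place $\iota_\ast\dbcoh{\mathcal{X}_\Sigma}$ inside the thick subcategory generated by $\pi^\ast\mathcal{T}$. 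A devissage by powers of the ideal of the zero section, or equivalently a filtration argument on the $\operatorname{Sym}^\bullet$-grading produced by $\pi_\ast$, then propagates generation to all of $\dbcoh{\mathcal{X}_{\mathcal{V}}}$. This is the step one has to be careful about, since $\pi$ is not proper and the naive adjunction argument does not close up.

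For the NCCR statement, specialise to $V=\omega_{X_\Sigma}$. The fan $\mathcal{V}$ of $\tot\omega_{X_\Sigma}$ has support $|\mathcal{V}|=\cone(P\times\{1\})$, where $P$ is the convex hull of the origin together with the ray generators of $\Sigma$, and the maximal cones of $\mathcal{V}$ furnish a simplicial triangulation of $P$ in which every simplex contains the origin as a vertex. Once this triangulation is verified to be regular --- a condition controlled by the positivity datum implicit in the existence of a tilting complex on $\mathcal{X}_\Sigma$ --- Theorem \ref{Prop:Parttiltingworks} applies directly to $\pi^\ast\mathcal{T}$ on $\mathcal{X}_{\mathcal{V}}$ and yields the desired NCCR for $R=k[|\mathcal{V}|^\vee\cap M]$.
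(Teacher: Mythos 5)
Your identification of the candidate object $\pi^\ast\mathcal{T}$, the flatness and affineness of $\pi$, and the Ext-vanishing computation via adjunction and the projection formula all match the paper's proof, which establishes
\[
\Ext^i_{\mathcal{X}_\mathcal{V}}(\pi^\ast\mathcal{T},\pi^\ast\mathcal{T})\cong H^i\bigl(\mathcal{X}_\Sigma,\mathcal{T}^\vee\otimes\mathcal{T}\otimes\operatorname{Sym}^\bullet(f_\Sigma^\ast V^\vee)\bigr)
\]
using the same ingredients, routed through the commuting square relating $\pi$, $p$, $f_\Sigma$ and $f_\mathcal{V}$.

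Where you diverge is in the generation step, and your stated concern there is misplaced. You write that ``the naive adjunction argument does not close up'' because $\pi$ is not proper, and therefore propose a Koszul resolution of the zero section together with a devissage by powers of its ideal. But properness is not what makes the adjunction argument close: in this paper a tilting complex is, by Definition \ref{Def:tilt}, one that generates $\operatorname{D}_{Qch}$, so the implication ``$\Rhom(\mathcal{T},\mathcal{G})=0 \Rightarrow \mathcal{G}=0$'' is available for arbitrary quasi-coherent $\mathcal{G}$, and there is no coherence or boundedness issue to worry about. The paper therefore simply transports the argument from Theorem \ref{Thm:NovaGeneralCplx}: if $\Rhom_{\mathcal{X}_\mathcal{V}}(\pi^\ast\mathcal{T},\mathcal{F})=0$, then by $(\pi^\ast,R\pi_\ast)$-adjunction $\Rhom_{\mathcal{X}_\Sigma}(\mathcal{T},R\pi_\ast\mathcal{F})=0$, whence $R\pi_\ast\mathcal{F}=0$ since $\mathcal{T}$ is tilting, whence $\mathcal{F}=0$ since $\pi$ is affine and $R\pi_\ast$ is consequently conservative on $\operatorname{D}_{Qch}$. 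Your Koszul argument is plausible but replaces a two-line adjunction with a devissage that is not needed, and it is worth internalising that affineness, not properness, is the relevant hypothesis.

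On the NCCR statement you rightly flag that one must know the triangulation of $P$ induced by $\mathcal{V}$ is regular in order to feed the output into Theorem \ref{Prop:Parttiltingworks} via Corollary \ref{Cor:TiltCplxCanbdl}; the paper takes this for granted. However, your proposed resolution --- that regularity is ``controlled by the positivity datum implicit in the existence of a tilting complex on $\mathcal{X}_\Sigma$'' --- is not an argument. Regularity of the central triangulation of $P$ is a combinatorial property of $\Sigma$ alone, amounting to semiprojectivity of $X_\mathcal{V}$, which holds when $X_\Sigma$ is projective and is unrelated to whether a tilting complex happens to exist. To close this step cleanly one should either strengthen the hypothesis from complete to projective $X_\Sigma$, or supply a direct verification that the fan $\mathcal{V}$ arises from a regular triangulation.
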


Since our first main result covers partial tilting complexes with endomorphism algebras of finite global dimension, one naturally is drawn to an attempt at generalising the above result. While the partial tilting property poses no issue, controlling the global dimension is more subtle, but we conjecture that the following holds.

\begin{conjecture*}(Conjecture \ref{Cor:NovaDMparttilt})
     Let $\Sigma$ be a complete, simplicial fan such that $\mathcal{X}_\Sigma$ has a partial tilting complex $\mathcal{T}$ with $\gldim\End(\mathcal{T})<\infty$. Let $\pi:\mathcal{V}\rightarrow X_\Sigma$ be a toric vector bundle on $X_\Sigma$ with fan $\Sigma_\mathcal{V}$. If $H^i(\mathcal{X}_\Sigma, \mathcal{T}^\vee\otimes\mathcal{T}\otimes \operatorname{Sym}^\bullet(\mathcal{E}^\vee))=0$ for all $i\neq 0$, then there is a partial tilting complex $\mathcal{T}'$ on $\mathcal{X}_{\Sigma_\mathcal{V}}$ such that $\gldim\End(\mathcal{T}')<\infty$. Thus, the ring $R=k[|\mathcal{V}|^\vee\cap M]$ admits an NCCR.
\end{conjecture*}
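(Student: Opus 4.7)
The plan is to extend the pullback construction from the proof of Theorem \ref{Thm:NovaDMstack} to the partial tilting setting. Let $p : \mathcal{X}_{\Sigma_\mathcal{V}} \to \mathcal{X}_\Sigma$ denote the structure map of the total space of the bundle (flat, hence $Lp^\ast = p^\ast$), and set $\mathcal{T}' := p^\ast \mathcal{T}$. The projection formula together with $Rp_\ast \O_{\mathcal{X}_{\Sigma_\mathcal{V}}} \simeq \operatorname{Sym}^\bullet(\mathcal{E}^\vee)$ yields
\[
\Rhom_{\mathcal{X}_{\Sigma_\mathcal{V}}}(\mathcal{T}', \mathcal{T}') \simeq \Rhom_{\mathcal{X}_\Sigma}\bigl(\mathcal{T}, \mathcal{T} \otimes \operatorname{Sym}^\bullet(\mathcal{E}^\vee)\bigr),
\]
and the cohomological hypothesis of the conjecture immediately forces $\Ext^i(\mathcal{T}', \mathcal{T}') = 0$ for $i \neq 0$, so $\mathcal{T}'$ is partial tilting. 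This mirrors the first step of Theorem \ref{Thm:NovaDMstack}; since generation by $\mathcal{T}$ plays no role in this computation, the argument transports verbatim to the partial tilting case.

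The next step is to read off the ring structure of $\Lambda' := \End(\mathcal{T}')$ as a non-negatively graded algebra
\[
\Lambda' \simeq \bigoplus_{l \geq 0} \Hom_{\mathcal{X}_\Sigma}\bigl(\mathcal{T}, \mathcal{T} \otimes \operatorname{Sym}^l(\mathcal{E}^\vee)\bigr),
\]
whose degree-zero part is $\Lambda := \End(\mathcal{T})$ and whose multiplication is induced from the symmetric algebra on $\mathcal{E}^\vee$. The strategy for finite global dimension is then to bound $\gldim \Lambda'$ by $\gldim \Lambda + \operatorname{rk}(\mathcal{E})$ via a Hilbert-syzygy-style argument: filter any finitely generated $\Lambda'$-module by its induced grading, lift a finite projective $\Lambda$-resolution of the associated graded to $\Lambda'$, and control the resulting spectral sequence using the finite projective dimension of $\operatorname{Sym}^\bullet(\mathcal{E}^\vee)$ as a module over itself.

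The principal obstacle is making this bound rigorous in the partial tilting regime. In the tilting case of Theorem \ref{Thm:NovaDMstack} one obtains a Fourier-Mukai equivalence $\dbcoh{\mathcal{X}_{\Sigma_\mathcal{V}}} \simeq \dbmod{\Lambda'}$, whence smoothness of the stack instantly gives $\gldim \Lambda' < \infty$; this tool is unavailable here, since partial tilting produces only a fully faithful embedding of the triangulated subcategory generated by $\mathcal{T}'$ into $\dbcoh{\mathcal{X}_{\Sigma_\mathcal{V}}}$. Candidate remedies are (i) constructing a relative resolution of the diagonal of $p$ using summands of $\mathcal{T}' \boxtimes (\mathcal{T}')^\vee$ twisted by symmetric powers, (ii) identifying $\Lambda'$ with a Rees-type algebra $\operatorname{Sym}_\Lambda(M)$ for the $\Lambda$-bimodule $M := \Hom(\mathcal{T}, \mathcal{T} \otimes \mathcal{E}^\vee)$ and invoking a PBW theorem over a non-commutative base, or (iii) combining Koszul duality for $\operatorname{Sym}^\bullet(\mathcal{E}^\vee)$ with a bar-resolution argument over $\Lambda' \otimes_k (\Lambda')^{\operatorname{op}}$ to produce an explicit bounded projective bimodule resolution of $\Lambda'$. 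Once $\gldim \Lambda' < \infty$ is secured, Theorem \ref{Prop:Parttiltingworks}, applied to the regular triangulation whose refining fan is $\Sigma_\mathcal{V}$, yields the NCCR for $R = k[|\mathcal{V}|^\vee \cap M]$.
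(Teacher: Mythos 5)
The statement you are trying to prove is labelled as a \emph{conjecture} in the paper, not a theorem, and the paper supplies no proof: the authors explicitly remark just above it that \textquotedblleft while the partial tilting property poses no issue, controlling the global dimension is more subtle, but we conjecture that the following holds.\textquotedblright\ So there is no paper proof to compare against. Your proposal correctly handles the part the paper says is easy --- showing $p^\ast\mathcal{T}$ is partial tilting via the adjunction, projection formula, and the cohomological vanishing hypothesis, exactly as in Theorem \ref{Thm:NovaDMstack} --- and then runs directly into the obstruction the paper flags.

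On the global dimension step, your proposal is candid that it is incomplete: you describe a Hilbert--syzygy-style filtration argument and then list three \emph{candidate remedies} without executing any of them. That is an accurate self-assessment; none of (i)--(iii) is carried out, so $\gldim\End(p^\ast\mathcal{T})<\infty$ is not established. A concrete worry with the filtration strategy is that $\Lambda'=\bigoplus_{l\geq 0}\Hom(\mathcal{T},\mathcal{T}\otimes\operatorname{Sym}^l(\mathcal{E}^\vee))$ is in general neither a polynomial ring nor a symmetric algebra over $\Lambda$ on the bimodule $M=\Hom(\mathcal{T},\mathcal{T}\otimes\mathcal{E}^\vee)$ unless the natural multiplication maps $M\otimes_\Lambda M\to\Hom(\mathcal{T},\mathcal{T}\otimes\operatorname{Sym}^2(\mathcal{E}^\vee))$ are surjective (and the kernel is exactly the expected quadratic relations); this is a genuine hypothesis that would need to be verified, and without it neither the PBW nor the Koszul-duality route applies directly. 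Your framing of this as an open point rather than a claimed proof is the right call, since the paper itself could not close this gap. If you want to make progress, a more promising route than the bimodule-resolution attack may be to seek conditions under which $p^\ast\mathcal{T}$ can be completed to a genuine tilting complex on $\mathcal{X}_{\Sigma_\mathcal{V}}$ (so that smoothness of the stack gives finite global dimension for free, reducing to the Theorem \ref{Thm:NovaDMstack} situation) rather than trying to bound $\gldim\Lambda'$ intrinsically.
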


\noindent The results we obtain are useful to examine cones over polytopes with interior points, and we formulate a preliminary strategy on how to obtain an NCCR for a given such cone, illustrating it on several examples. Finally, we focus on the case of reflexive Gorenstein cones and use results by Borisov and Hua \cite{BH09} show that the following result holds.

\begin{theorem*}(Theorem \ref{Thm:FanoAlmostSimplicial}
    Let $P\in N_{\R}\cong \R^n$ be a simplicial, reflexive polytope with $\le n+2$ vertices. Consider the cone $\sigma=\cone(P\times\{1\})$. Then $R=k[\sigma^\vee\cap M]$ has an NCCR.
\end{theorem*}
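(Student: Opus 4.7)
The plan is to reduce the theorem to Theorem \ref{Thm:NovaDMstack} applied to the canonical bundle of the Fano toric DM stack cut out by the face fan of $P$. Since $P$ is reflexive and simplicial, its face fan $\Sigma_P$ defines a smooth Fano toric DM stack $\mathcal{X}_{\Sigma_P}$, and the hypothesis that $P$ has at most $n+2$ vertices is equivalent to $\mathcal{X}_{\Sigma_P}$ having Picard rank at most two. Moreover, the Gorenstein cone $\sigma=\cone(P\times\{1\})$ coincides with the support $|\Sigma_\mathcal{V}|$ of the fan $\Sigma_\mathcal{V}$ attached to the total space $\tot \omega_{X_{\Sigma_P}}$, since lifting each vertex of $P$ to height one in $N\oplus\Z$ reproduces the ray configuration of the canonical-bundle construction on a Fano stack.

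The first step is to invoke Borisov--Hua \cite{BH09}, which produces a tilting bundle $\mathcal{T}$ on $\mathcal{X}_{\Sigma_P}$ as a direct sum of line bundles in explicitly controlled degrees whenever the Fano toric DM stack has Picard rank at most two. The second step is to verify the hypothesis of Theorem \ref{Thm:NovaDMstack} with $V=\omega_{X_{\Sigma_P}}$, namely
\[
H^i\bigl(\mathcal{X}_{\Sigma_P},\;\mathcal{T}^\vee\otimes\mathcal{T}\otimes\operatorname{Sym}^\bullet(f_{\Sigma_P}^\ast \omega_{X_{\Sigma_P}}^\vee)\bigr)=0 \quad\text{for all } i\neq 0.
\]
Unpacking the symmetric algebra, this reduces to showing $H^i(\mathcal{X}_{\Sigma_P},\mathcal{T}^\vee\otimes\mathcal{T}\otimes\omega_{X_{\Sigma_P}}^{-l})=0$ for every $l\geq 0$ and every $i\neq 0$. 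The case $l=0$ is exactly the tilting property of $\mathcal{T}$; for $l>0$, ampleness of $\omega^\vee$ (coming from the Fano property) together with Serre vanishing on the proper DM stack $\mathcal{X}_{\Sigma_P}$ dispatches all sufficiently large twists, while the finitely many intermediate values of $l$ are treated by a summand-by-summand analysis using the explicit form of $\mathcal{T}$ supplied by Borisov--Hua.

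Once the vanishing is established, Theorem \ref{Thm:NovaDMstack} with $V=\omega_{X_{\Sigma_P}}$ produces a tilting complex on $\mathcal{X}_{\Sigma_\mathcal{V}}$ and, in the same breath, delivers the NCCR for $R=k[\sigma^\vee\cap M]$ claimed in the theorem.

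The main obstacle I anticipate lies entirely in the intermediate-twist cohomology check. Although Serre vanishing controls the large-$l$ regime and the tilting property takes care of $l=0$, verifying vanishing for every small $l>0$ requires confirming that each line-bundle summand $L_i\otimes L_j^{-1}\otimes\omega^{-l}$ of $\mathcal{T}^\vee\otimes\mathcal{T}\otimes\omega^{-l}$ avoids the loci in the Picard group where higher cohomology jumps. In Picard rank at most two this reduces to a finite combinatorial check in a two-dimensional nef cone, but making the argument uniform across all simplicial reflexive polytopes with at most $n+2$ vertices, and in particular handling the two families (Picard rank one, i.e.\ weighted projective simplices, versus Picard rank two) separately, is the technical heart of the proof.
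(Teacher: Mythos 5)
Your high-level plan matches the paper's proof exactly: pass to the face fan $\Sigma_P$ of $P$, observe that $\mathcal{X}_{\Sigma_P}$ is a smooth Fano toric DM stack of Picard rank $\le 2$, invoke Borisov--Hua for a tilting bundle $\mathcal{T}$ built from an explicit exceptional collection of line bundles, and feed this into Theorem~\ref{Thm:NovaDMstack} after checking the cohomology vanishing $H^i(\mathcal{X}_{\Sigma_P},\mathcal{T}^\vee\otimes\mathcal{T}\otimes(\omega^\vee)^{\otimes l})=0$ for $i\neq 0$, $l>0$. So far so good.

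The genuine gap is exactly the one you flag yourself: you never actually establish the vanishing for the intermediate twists, and the route you propose --- Serre vanishing for large $l$, then a ``summand-by-summand'' check for small $l$ --- does not obviously close the argument, since Serre vanishing gives a threshold $l_0$ that depends on the stack (hence on the polytope), so the residual finite check is not uniform over all simplicial reflexive $P$ with $\le n+2$ vertices. The paper sidesteps this dichotomy entirely by exploiting the \emph{forbidden cone} machinery from \cite{BH09}: a line bundle whose class avoids all forbidden cones in $\operatorname{Pic}_\R(\mathcal{X}_{\Sigma_P})$ is acyclic, and the Borisov--Hua collection is arranged so that every $\mathcal{L}_i^\vee\otimes\mathcal{L}_j$ lies outside them. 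The key observation is then that twisting by $\O(l\sum_\rho D_\rho)$ for $l>0$ cannot move such a class into a forbidden cone --- in rank one because it only raises the degree, and in rank two because adding $l\sum D_\rho$ cannot reach $F_\emptyset=-\sum(1+\R_{\ge 0})D_\rho$ and does not change which translate of the supporting hyperplanes $H_\pm$ the class sits on. This gives vanishing for \emph{all} $l>0$ in one stroke, with no large/small split and no dependence on the specific polytope. To complete your proof you would need to supply this (or an equivalent) uniform combinatorial argument; as written, your sketch identifies the hard step but does not resolve it.
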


\subsection{Structure and Notation}

This paper is organised in three parts. We first discuss some background on toric geometry in \S\ref{sec:Toric}, revisiting the constructions of Cox stacks and toric VGIT. Then we discuss NCCRs in \S\ref{sec:NCCR}, proving the first main result of this paper in Theorem \ref{Prop:Parttiltingworks}. Finally, \S\ref{sec:CanBdls} discusses applications of this result to the case of Gorenstein cones obtained as support of the fan corresponding to the total space of canonical bundles over toric projective varieties. We establish some generalisations of known results regarding the existence of tilting complexes. In $\S$ \ref{sec:refl} we then focus our attention especially to the case of reflexive Gorenstein cones, which are intrinsically linked to Fano varieties.

Let us fix here some notation for the entirety of the paper. Given two toric fans $\Sigma, \Psi$ we will write $\Sigma\subset \Psi$ to mean that $\Sigma$ is a subfan of $\Psi$, i.e. $\sigma \in \Sigma\Rightarrow \sigma\in\Psi$. 
For a normal noetherian domain $R$, a finitely generated $R$-module is said to be \newterm{reflexive} if the canonical map $M\mapsto \Hom_R(\Hom_R(M,R),R)$ is an isomorphism. The category of reflexive $R$-modules is denoted by $\operatorname{ref}R$.  If $\Lambda$ is a reflexive $R$-algebra then $\operatorname{ref}(\Lambda)$ is the category of $\Lambda$-modules which are reflexive as $R$-modules. If $R$ is commutative as well, we say an $R$-module $M$ is \newterm{maximal Cohen-Macaulay} if $M_m$ is maximal Cohen-Macaulay for every maximal ideal $m$. The category of such modules is denoted by $\operatorname{CM}R$ and we will omit the word maximal in this paper. As detailed in \cite{IR08, IW14b} $M\in \operatorname{CM}R\Leftrightarrow \Ext^i(M,R)=0$ for all $i>0$. In this paper we permit ourselves to write $\Rhom^i$ when not explicitly considering any equivariant structure, but will write $\Rhomi{i}_X^G(M,N) $ to mean the $i$-th $G$-equivariant derived functor on $X$, $\Rhom^G_X$.
Finally, we do fix here an algebraically closed field $k$ of characteristic 0.

\subsection{Acknowledgements}
The authors would like to thank Will Donovan, Angel Toledo, Tyler Kelly and David Favero for many discussions leading to the creation of this paper. The first author is supported by Beijing Natural Science Foundation IS25013. The second author is supported by grants from Beijing Institute of Mathematical Sciences and Applications (BIMSA), the Beijing NSF BJNSF-IS24005, and the China National Science Foundation (NSFC) NSFC-RFIS program W2432008. He would like to also thank China's National Program of Overseas High Level Talent for generous support. 

\section{Toric stacks and VGIT}
\label{sec:Toric}

In this section, we will recall some facts from toric geometry, fixing the notation that we will use. Fix a lattive $M$ of rank $n$ with dual lattice $N$, equipped with the pairing $\langle -,-\rangle:M\times N\rightarrow Z$, extending $\R$-linearly to a pairing of $M_{R}=M\otimes_{\Z}\R, N_{\R}=N\otimes_{\Z}\R$.
\subsection{Cox stacks}
Let $\Sigma$ be a fan in $N_\R$. Using the Cox construction (see \cite{CLS}, Section 5.1), we can associate to $\Sigma$ a quotient stack $\mathcal{X}_\Sigma$. Denote by $\nu=\{ u_\rho\mid \rho\in\Sigma(1)\}\subset N$ the set of primitive lattice generators of the rays in $\Sigma(1)$. For the sake of simplifying the notation, we write $k:=|\nu|$. We can consider the vector space $\R^{k}$ with elementary $\Z$-basis vectors $e_\rho$, indexed by the rays $\rho\in\Sigma(1)$. Then we define the \newterm{Cox fan} of $\Sigma$ to be\[
\operatorname{Cox}(\Sigma):=\{\cone(e_\rho\mid\rho\in\sigma)\mid\sigma\in\Sigma\}.
\]
This is a subfan of the standard fan for $\A^{k}$, and thus its associated toric variety is an open subspace of said affine space. We denote this open set by $U_\Sigma:=X_{\operatorname{Cox}(\Sigma)}$, called the \newterm{Cox open set} associated to $\Sigma$. Consider now the right exact sequence

\begin{eqnarray}\label{eqn:Coxseqpre}
    && M\xrightarrow{f_\Sigma}\Z^k\xrightarrow{\pi}\operatorname{coker}(f_\Sigma)\rightarrow 0,\\
   &&  m\mapsto \sum_{\rho\in\Sigma(1)}\langle u_\rho,m\rangle e_\rho\nonumber.\;
\end{eqnarray}
Applying the functor $\Hom(-,\mathbb{G}_m)$ yields (as $\mathbb{G}_m$ is injective) the left exact sequence\[
1\rightarrow \Hom(\operatorname{coker}(f_\Sigma),\mathbb{G}_m)\xrightarrow{\hat{\pi}}\mathbb{G}_m^k\rightarrow \mathbb{G}_m^n,
\]

where $n=\dim M$. Note that the group $S_\Sigma:=\Hom(\operatorname{coker}(f_\Sigma),\mathbb{G}_m)$ acts on $U_\Sigma$.
\begin{definition}
    \label{Def:Coxstack}
    Define the \newterm{Cox stack} associated to $\Sigma$ to be\[
    \mathcal{X}_\Sigma:=[U_\Sigma/S_\Sigma].
    \]
\end{definition}

\noindent One compelling reason to consider such a quotient stack is that it may be smooth when the toric variety $X_\Sigma$ is not. In particular, we have the following result relating $\mathcal{X}_\Sigma$ and $X_\Sigma$ for simplicial fans.

\begin{theorem}[Theorem 4.12 in \cite{FK18}]
    \label{Thm:4.12FK18}
    If $\Sigma$ is simplicial, then $\mathcal{X}_\Sigma$ is a smooth Deligne-Mumford stack with coarse moduli space $X_\Sigma$. When $\Sigma$ is smooth (equivalently, the variety $X_\Sigma$ is smooth), $\mathcal{X}_\Sigma\cong X_\Sigma$.
\end{theorem}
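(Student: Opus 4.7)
The plan is to verify in order four properties of $\mathcal{X}_\Sigma=[U_\Sigma/S_\Sigma]$: algebraicity, smoothness, the Deligne-Mumford (finite stabiliser) property, and the identification of its coarse moduli with $X_\Sigma$, and then specialise to the smooth case. Because $U_\Sigma$ is an $S_\Sigma$-stable open subscheme of the smooth variety $\A^k$ and $S_\Sigma=\Hom(\operatorname{coker}(f_\Sigma),\mathbb{G}_m)$, being dual to a finitely generated abelian group, is a diagonalisable (hence smooth, affine) group scheme over $k$, algebraicity and smoothness of the quotient stack are automatic: the quotient of a smooth scheme by a smooth affine group action is always a smooth algebraic stack.

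The heart of the argument is the finiteness of the stabilisers. I would stratify $U_\Sigma$ by cones of $\Sigma$: for each $\sigma\in\Sigma$ the corresponding torus orbit in $U_\Sigma$ contains the distinguished point $x_\sigma$ whose $\rho$-th coordinate vanishes iff $\rho\in\sigma(1)$. The stabiliser $(S_\Sigma)_{x_\sigma}$ is the simultaneous kernel of the characters $\{e_\rho\in\operatorname{coker}(f_\Sigma)\}_{\rho\notin\sigma(1)}$, so Cartier-dualising yields
\[
(S_\Sigma)_{x_\sigma}\;\cong\;\Hom\bigl(\operatorname{coker}(\phi_\sigma),\mathbb{G}_m\bigr),\qquad \phi_\sigma:M\to \Z^{\sigma(1)},\ m\mapsto (\langle u_\rho,m\rangle)_{\rho\in\sigma(1)},
\]
by a short diagram chase starting from (\ref{eqn:Coxseqpre}). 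Simpliciality of $\sigma$ means precisely that $\{u_\rho\}_{\rho\in\sigma(1)}$ is $\R$-linearly independent in $N_\R$, so $\phi_\sigma\otimes\Q$ is surjective, $\operatorname{coker}(\phi_\sigma)$ is a finite abelian group, and its Cartier dual is a finite (reduced, since we are in characteristic $0$) group scheme. Since every $S_\Sigma$-orbit in $U_\Sigma$ meets the closure of some stratum, an arbitrary stabiliser embeds into one of the $(S_\Sigma)_{x_\sigma}$, proving the DM property.

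For the coarse moduli claim I would invoke the classical fact that for a simplicial fan the Cox description exhibits $X_\Sigma$ as the good categorical quotient $U_\Sigma\sslash S_\Sigma$; once we know $[U_\Sigma/S_\Sigma]$ is DM, Keel-Mori (or, more concretely, a local calculation on each patch $[U_\sigma/S_\Sigma]\to\spec k[\sigma^\vee\cap M]$) identifies its coarse moduli space with this categorical quotient. Finally, smoothness of $\Sigma$ at $\sigma$ means $\{u_\rho\}_{\rho\in\sigma(1)}$ extends to a $\Z$-basis of $N$, so $\phi_\sigma$ is already surjective and $\operatorname{coker}(\phi_\sigma)=0$; every stabiliser is then trivial, $S_\Sigma$ acts freely on $U_\Sigma$, and the stack agrees with the scheme $U_\Sigma/S_\Sigma=X_\Sigma$. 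I expect the main technical obstacle to be the stabiliser computation, since it rests on a careful compatibility between the global sequence (\ref{eqn:Coxseqpre}) and its $\sigma$-local analogue together with an application of Cartier duality; once that is in place, the DM-ness, coarse moduli identification, and smooth-case statement all follow quickly.
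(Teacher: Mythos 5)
The paper does not prove Theorem~\ref{Thm:4.12FK18} at all; it is imported as a black box from \cite{FK18} (their Theorem 4.12). So there is no internal proof to measure yours against, but your reconstruction is a correct and standard one, and it matches the usual argument used for this result in the toric-stacks literature.

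Going through it: algebraicity and smoothness are immediate as you say, since $U_\Sigma\subseteq\A^k$ is open and $S_\Sigma$ is a diagonalisable (hence smooth affine) group over a field of characteristic $0$. Your stabiliser computation is correct: from~\eqref{eqn:Coxseqpre}, quotienting $\operatorname{coker}(f_\Sigma)$ by the images of $\{e_\rho\}_{\rho\notin\sigma(1)}$ produces $\operatorname{coker}(\phi_\sigma)$, and Cartier duality identifies $(S_\Sigma)_{x_\sigma}$ with $\Hom(\operatorname{coker}(\phi_\sigma),\mathbb{G}_m)$. Simpliciality of $\sigma$ is precisely $\R$-linear independence of $\{u_\rho\}_{\rho\in\sigma(1)}$, which is equivalent to surjectivity of $\phi_\sigma\otimes\Q$, so $\operatorname{coker}(\phi_\sigma)$ is finite and its Cartier dual is a finite reduced group scheme in characteristic $0$; this gives both finite stabilisers and unramified stabilisers, i.e.\ the DM property. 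The smooth case follows by replacing ``$\Q$-linearly independent'' with ``part of a $\Z$-basis of $N$'', forcing $\operatorname{coker}(\phi_\sigma)=0$ and a free action.

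Two small expository points. First, the sentence ``every $S_\Sigma$-orbit meets the closure of some stratum'' is an unnecessary detour: for $x\in U_\Sigma$ the stabiliser is the simultaneous kernel of $\{e_\rho : x_\rho\neq 0\}$, and since membership in $U_\Sigma$ means $\{\rho : x_\rho = 0\}\subseteq\sigma(1)$ for some $\sigma\in\Sigma$, this is directly contained in $(S_\Sigma)_{x_\sigma}$; no orbit closures are needed. Second, to invoke Keel--Mori one should in principle also record that $\mathcal{X}_\Sigma$ has finite inertia as a morphism (not merely finite fibers); this holds here because the stabiliser loci stratify $U_\Sigma$ by locally closed subschemes, but it is worth stating since it is the hypothesis Keel--Mori actually needs.
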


Now that we have defined the Cox stacks that we associate to toric varieties, we can ask ourselves when these stacks may be in fact equivalent, if only at a derived level. We recall the following lemma. 

\begin{lemma}[Lemma 4.22 in \cite{FK18}]\label{Lem:4.22FK18}
    Suppose we have an exact sequence of algebraic groups,
    \[
    0\rightarrow H\xrightarrow{i}G\xrightarrow{\pi}Q\rightarrow 0.
    \]
    Let $G$ act on $X$ and hence on $X\times Q$ via $\pi$. Then we have an isomorphism of stacks\[
    [X\times Q/G]\cong [X/H].
    \]
\end{lemma}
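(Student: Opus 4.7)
The plan is to identify both stacks with a common double-quotient $[Y/(G\times H)]$, where $Y:=X\times G$ carries two commuting actions. I would equip $Y$ with the left $G$-action $g'\cdot(x,g):=(g'x,g'g)$ and the right $H$-action $(x,g)\cdot h:=(x,gh)$; a direct calculation shows these commute, so they assemble into a $G\times H$-action on $Y$. I would then compute $[Y/(G\times H)]$ by iterated quotients in two different orders.

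Quotienting first by the right $H$-action: because $\pi:G\to Q$ is an $H$-torsor by definition of $Q=G/H$, the projection $Y=X\times G\to X\times Q$ realises $Y$ as an $H$-torsor, and the residual left $G$-action descends to the diagonal action on $X\times Q$ appearing in the statement (with $G$ acting on $Q$ through $\pi$). This gives $[Y/(G\times H)]\cong[(X\times Q)/G]$. Quotienting first by the left $G$-action: this action is free, since it is free on the second factor, and the $G$-invariant map $Y\to X$, $(x,g)\mapsto g^{-1}x$, exhibits $Y$ as a trivial $G$-torsor over $X$. The residual right $H$-action on $X$ is $x\cdot h=h^{-1}x$, which is the right action corresponding under the standard conversion to the restriction of the $G$-action to $H$, and we conclude $[Y/(G\times H)]\cong[X/H]$. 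Combining the two identifications yields the lemma.

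The main obstacle is to legitimise the iterated-quotient identity $[[Y/H]/G]\cong[Y/(G\times H)]\cong[[Y/G]/H]$ for commuting actions of algebraic groups. I would handle this either by citing standard groupoid calculus for stack quotients, or by arguing on the functor of points: a $T$-point of $[(X\times Q)/G]$ is a $G$-torsor $P\to T$ together with a $G$-equivariant morphism $(f,q):P\to X\times Q$, and the preimage $P_H:=q^{-1}(e_Q)\subset P$ is an $H$-torsor on $T$ with $H$-equivariant map $f|_{P_H}:P_H\to X$. The remaining task is to verify that this restriction is pseudo-inverse to the induction functor $P_H\mapsto P_H\times^H G$, which reduces to the observation that any $G$-torsor equipped with a $G$-equivariant map to $Q$ is canonically recovered as the induction of its fibre over $e_Q$, a consequence of $G\to Q$ being an $H$-torsor and fpqc descent along it.
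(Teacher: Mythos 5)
The paper does not actually supply a proof of this lemma; it is imported verbatim as Lemma~4.22 of [FK18], so there is no in-text argument to compare against. Your proof is correct: the shearing space $Y=X\times G$ with commuting left $G$- and right $H$-actions, quotiented in two orders via the observations that $X\times G\to X\times Q$ is an $H$-torsor and $X\times G\to X$, $(x,g)\mapsto g^{-1}x$, is a trivial $G$-torsor, is exactly the standard device for this kind of statement, and the reduction of the two-variable quotient to iterated quotients is unproblematic here precisely because in each order the first quotient is by a free action with a scheme quotient, so one never needs the general iterated stack-quotient formalism beyond the elementary case of a torsor inside a group action.
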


\noindent Using this, we prove the following version of Corollary 4.23 in \cite{FK18}.

\begin{lemma}
    \label{Lem:StackIso}
Let $\Sigma$ be a simplicial fan with full-dimensional, strongly convex rational support. Suppose $\Psi$ is another such fan such that $\Sigma$ is a subfan of $\Psi$, $\Sigma\subset\Psi$. Then we have a stack isomorphism\[
[U_\Sigma\times\mathbb{G}_m^{|\Psi(1)\setminus\Sigma(1)|}/S_\Psi\times\mathbb{G}_m]\cong[U_\Sigma/S_\Sigma\times\mathbb{G}_m]
\]
\end{lemma}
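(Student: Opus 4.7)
The plan is to apply Lemma \ref{Lem:4.22FK18} directly, with $X=U_\Sigma$, $G=S_\Psi\times\mathbb{G}_m$, $H=S_\Sigma\times\mathbb{G}_m$ and $Q=\mathbb{G}_m^r$, where $r=|\Psi(1)\setminus\Sigma(1)|$. The whole argument then splits into (a) producing the requisite short exact sequence $0\to H\to G\to Q\to 0$, and (b) identifying the $G$-action on $X\times Q$ with the action induced from the Cox construction of $\Psi$, so that the left-hand side of the lemma genuinely is the stack written in the statement.

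For (a) I would start from the two Cox presentations
\[
M\xrightarrow{f_\Sigma}\Z^k\to \operatorname{coker}(f_\Sigma)\to 0,\qquad
M\xrightarrow{f_\Psi}\Z^{k+r}\to \operatorname{coker}(f_\Psi)\to 0,
\]
and use the projection $\Z^{k+r}\twoheadrightarrow\Z^k$ (forgetting the coordinates of the new rays) to obtain a commutative diagram whose right-hand vertical map is the induced surjection $\operatorname{coker}(f_\Psi)\to\operatorname{coker}(f_\Sigma)$. A short diagram chase, using that $\Sigma$ has full-dimensional strongly convex rational support (hence $f_\Sigma$ is injective), shows that the kernel of this surjection is a free $\Z^r$ with basis the images of the new ray coordinates. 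Dualising the resulting sequence $0\to\Z^r\to\operatorname{coker}(f_\Psi)\to\operatorname{coker}(f_\Sigma)\to 0$ via $\Hom(-,\mathbb{G}_m)$ (exact, since $\mathbb{G}_m$ is divisible) yields $0\to S_\Sigma\to S_\Psi\to \mathbb{G}_m^r\to 0$, and taking Cartesian product with $\mathrm{id}_{\mathbb{G}_m}$ on the first two terms (and the zero map on $\mathbb{G}_m^r$) produces the sequence $0\to H\to G\to Q\to 0$ that Lemma \ref{Lem:4.22FK18} requires.

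For (b) I would first observe that $U_\Sigma\times\mathbb{G}_m^r$ sits naturally inside $U_\Psi\subset\A^{k+r}$ as the locus where all coordinates indexed by new rays are nonzero: this is immediate from the definitions of $\operatorname{Cox}(\Sigma)$ and $\operatorname{Cox}(\Psi)$ together with $\Sigma\subset\Psi$. The restriction to this locus of the coordinate-wise action of $S_\Psi\hookrightarrow\mathbb{G}_m^{k+r}$ decomposes, under the explicit injection $S_\Sigma\hookrightarrow S_\Psi$ and surjection $S_\Psi\twoheadrightarrow \mathbb{G}_m^r$ built in (a), into the standard $S_\Sigma$-action on $U_\Sigma$ on the first factor and an action on $\mathbb{G}_m^r$ that factors through the quotient map $\pi:S_\Psi\to\mathbb{G}_m^r$. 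The extra $\mathbb{G}_m$ factor in $G$ is carried through on both sides without altering this decomposition. Thus the $G$-action on $U_\Sigma\times\mathbb{G}_m^r$ is of the form assumed in Lemma \ref{Lem:4.22FK18}, and the $H$-action it restricts to on $U_\Sigma$ is the standard one, so the lemma applies and gives the claimed isomorphism.

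The main obstacle, and the only non-formal step, is the injectivity of $\Z^r\hookrightarrow\operatorname{coker}(f_\Psi)$ in (a): a priori a relation of the form $f_\Psi(m)=(0,c)$ could identify distinct elements of $\Z^r$, and ruling this out is exactly what the full-dimensional support hypothesis provides via $\ker f_\Sigma=0$. Once this is secured the rest is routine bookkeeping with the Cox construction and the action on $U_\Psi$.
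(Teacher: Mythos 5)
Your proof is correct and follows essentially the same route as the paper: both construct the short exact sequence $0\to S_\Sigma\to S_\Psi\to\mathbb{G}_m^r\to 0$ by comparing the Cox presentations of $\Sigma$ and $\Psi$ via the coordinate projection $\Z^{|\Psi(1)|}\twoheadrightarrow\Z^{|\Sigma(1)|}$, dualize with $\Hom(-,\mathbb{G}_m)$, and then invoke Lemma \ref{Lem:4.22FK18}. The paper carries the auxiliary $\Z$ (resp.\ $\mathbb{G}_m$) factor through the snake-lemma diagram from the outset rather than appending it at the end, and it leaves your compatibility check (b) implicit, but these are cosmetic differences.
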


\begin{proof}
    We note that by \eqref{eqn:Coxseqpre} we have right exact sequences
    \begin{eqnarray*}
        && M\xrightarrow{f_\Sigma}\Z^{|\Sigma(1)|}\rightarrow \operatorname{coker}(f_\Sigma)\rightarrow 0;\\
        &&  M\xrightarrow{f_\Psi}\Z^{|\Psi(1)|}\rightarrow \operatorname{coker}(f_\Psi)\rightarrow 0.\;
    \end{eqnarray*}
    Due to the full-dimensionality of $|\Sigma|, |\Psi|$, the maps $f_\Sigma, f_\Psi$ are injective and thus we can extend the sequences to the left by $0$. We can then build a commutative diagram (where we abuse notation and write $f_\Sigma, f_\Psi$ to denote the maps $(f_\Sigma,0), (f_\Psi,0)$):
    \[
    \begin{tikzcd}
        0\arrow[r]\arrow[d] &0\arrow[r]\arrow[d]& \ker b\arrow[r]\arrow[d] &\ker c\arrow[r]\arrow[d] & 0\arrow[d]\\
        0\arrow[r]\arrow[d] & M\arrow[d, equal, "a=id"]\arrow[r, "f_{\Psi}"] & \Z^{|\Psi(1)|}\times \Z \arrow[d, "b"]\arrow[r] & \operatorname{coker}(f_\Psi)\times \Z\arrow[d, "c"]\arrow[r] & 0 \arrow[d]\\
        0 \arrow[r]\arrow[d]& M\arrow[r, "f_{\Sigma}"] \arrow[d]& \Z^{|\Sigma(1)|}\times \Z\arrow[r]\arrow[d]& \operatorname{coker}(f_\Sigma)\times \Z\arrow[r] \arrow[d]& 0\arrow[d] \\
        0\arrow[r]& 0\arrow[r]& \operatorname{coker}b\arrow[r]&\operatorname{coker}c\arrow[r]& 0
    \end{tikzcd}
    \]
     
    We note here that $\ker a=\operatorname{coker}a=\operatorname{coker}b=0$ (as $a$ is the identity and $b$ the projection map). 
By the Snake lemma, there is an exact sequence\[
\ker a=0\rightarrow \ker b\rightarrow \ker c\rightarrow \operatorname{coker}a=0\rightarrow 0\rightarrow \operatorname{coker} c\rightarrow 0.
\]
Hence $\ker c=\ker b=\Z^{|\Psi(1)\setminus \Sigma(1)|}$. Thus, the diagram gives the following exact sequence (coming from the right vertical sequence in the diagram):
\[
0\rightarrow \Z^{|\Psi(1)\setminus\Sigma(1)|}\rightarrow \operatorname{coker}(f_\Psi)\times \Z\rightarrow \operatorname{coker}(f_\Sigma)\times \Z\rightarrow 0.
\]
Applying $\Hom(-,\mathbb{G}_m)$ to this sequence yields
\[
0\rightarrow S_\Sigma\times \mathbb{G}_m\rightarrow S_\Psi\times \mathbb{G}_m\rightarrow \mathbb{G}_m^{|\Psi(1)\setminus\Sigma(1)|}\rightarrow0.
\]
The desired stack isomorphism $[U_\Sigma\times\mathbb{G}_m^{|\Psi(1)\setminus\Sigma(1)|}/S_\Psi\times\mathbb{G}_m]\cong[U_\Sigma/S_\Sigma\times\mathbb{G}_m]$ follows by Lemma \ref{Lem:4.22FK18}.
\end{proof}

Our focus in this paper lies on toric varieties whose fans have convex rational polyhedral support enjoying the property of being Gorenstein cones.
\begin{definition}
    \label{Def:QGor}
    A cone $\sigma\subset N_\R$ is said to be \newterm{Gorenstein} (resp. $\Q$-Gorenstein) \newterm{with respect to} $\mathfrak{m}_\sigma\in M$ (resp. $\mathfrak{m}_\sigma\in M_\Q$) if the cone is generated over $\Q$ by finitely many lattice points in $\{n\in N\mid \langle \mathfrak{m}_\sigma,n\rangle=1\}$.
\end{definition} 

The element $\mathfrak{m}_\sigma$ dictating the Gorenstein structure of $\sigma$ will be referred to as \newterm{Gorenstein element} by us. Note that if $\sigma$ is $\Q$-Gorenstein and full-dimensional, i.e. $\dim \sigma=n$, then $\mathfrak{m}_\sigma$ is unique. Changing base, we may assume $\mathfrak{m}_\sigma=(0,\dots,0,1)$ and hence that $\sigma$ is of the form $\cone(P\times\{1\})$ for some convex lattice polyhedron $P\subset \R^{n-1}\times\{1\}\subset \R^n=N_{\R}$. 

Fixing such a Gorenstein cone $\sigma=\cone(P\times\{1\})$, consider a simplicial fan $\Sigma\subseteq N_{\R}$ such that $|\Sigma|=\sigma$ and such that the primitive generators $u_\rho$ of $\rho\in\Sigma(1)$ lie in $P\times\{1\}$. In other words, we require $\langle(0,\dots,0,1),u_\rho\rangle=1$ for all $\rho\in\Sigma(1)$. Denote by $R$ the ring $R=k[|\Sigma|^\vee\cap M]$. In their paper \cite{SVdBtoricII}, \v{S}penko and Van den Bergh prove the following lemma.

\begin{lemma}[Lemma A.3 in \cite{SVdBtoricII}]
\label{Lem:RelSerFun}
    The relative Serre functor of $\dbcoh{\mathcal{X}_\Sigma}$ with respect to $\dbcoh{X_\sigma}$ is the identity; i.e. for all $\mathcal{F}, \mathcal{G}\in\dbcoh{\mathcal{X}_\Sigma}$ we have\[
    \Rhom_{\mathcal{X}_\Sigma}(\mathcal{F},\mathcal{G})\cong\Rhom_R(\Rhom_{\mathcal{X}_\Sigma}(\mathcal{F},\mathcal{G}),R).
    \] 
\end{lemma}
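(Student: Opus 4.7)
The plan is to recognise this lemma as a relative Serre duality statement for the morphism $f:\mathcal{X}_\Sigma \to X_\sigma = \spec R$ and to reduce it to the assertion that the relative dualising complex is trivial. First I would check that $f$ is a proper morphism of relative dimension zero, with smooth source. Smoothness of $\mathcal{X}_\Sigma$ as a Deligne--Mumford stack is Theorem \ref{Thm:4.12FK18}, and $f$ factors as the coarse moduli map $\mathcal{X}_\Sigma \to X_\Sigma$ (proper because the simplicial fan gives finite stabilisers) followed by the toric morphism $X_\Sigma \to X_\sigma$ induced by the refinement $\Sigma$ of $\sigma$, which is proper since $|\Sigma|=\sigma$. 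Since $\dim \mathcal{X}_\Sigma = \dim X_\sigma = n$, the relative dimension is zero.

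Next I would invoke relative Grothendieck--Serre duality in this setting. For a proper morphism from a smooth DM stack to an affine scheme, the relative Serre functor takes the form $S_{rel}(-) = (-)\otimes \omega_{\mathcal{X}_\Sigma/X_\sigma}[n - n] = (-)\otimes \omega_{\mathcal{X}_\Sigma/X_\sigma}$, and relative duality gives, for $\mathcal{F},\mathcal{G}\in\dbcoh{\mathcal{X}_\Sigma}$, a natural isomorphism
\[
\Rhom_R\bigl(\Rhom_{\mathcal{X}_\Sigma}(\mathcal{G},\mathcal{F}),R\bigr)\;\cong\;\Rhom_{\mathcal{X}_\Sigma}\bigl(\mathcal{F},\,\mathcal{G}\otimes\omega_{\mathcal{X}_\Sigma/X_\sigma}\bigr).
\]
(Here I read the statement of the lemma as having the arguments of the inner $\Rhom$ swapped on the right-hand side, which appears to be a typographical issue in the original.) Thus the lemma reduces to producing a natural isomorphism $\omega_{\mathcal{X}_\Sigma/X_\sigma}\cong\mathcal{O}_{\mathcal{X}_\Sigma}$.

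For the final computation I would use the toric description of the canonical. On the Cox stack one has $\omega_{\mathcal{X}_\Sigma}=\mathcal{O}\bigl(-\sum_{\rho\in\Sigma(1)} D_\rho\bigr)$. By hypothesis every primitive ray generator $u_\rho$ satisfies $\langle \mathfrak{m}_\sigma,u_\rho\rangle = 1$, so $\sum_\rho D_\rho = \operatorname{div}(\chi^{\mathfrak{m}_\sigma})$ is a principal divisor on $\mathcal{X}_\Sigma$, and $\chi^{\mathfrak{m}_\sigma}$ exhibits a canonical trivialisation $\omega_{\mathcal{X}_\Sigma}\cong \mathcal{O}_{\mathcal{X}_\Sigma}$. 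On the base, $\sigma$ being Gorenstein with element $\mathfrak{m}_\sigma$ means precisely that the dualising module $\omega_R$ is the free $R$-module of rank $1$ generated by $\chi^{\mathfrak{m}_\sigma}$, so $\omega_{X_\sigma}\cong \mathcal{O}_{X_\sigma}$ via the same character. These two trivialisations are induced by pulling back the same section, so $\omega_{\mathcal{X}_\Sigma/X_\sigma} = \omega_{\mathcal{X}_\Sigma}\otimes f^{\ast}\omega_{X_\sigma}^{-1}$ is canonically isomorphic to $\mathcal{O}_{\mathcal{X}_\Sigma}$. This is the geometric content of crepancy of $f$, which the height-$1$ hypothesis on the rays of $\Sigma$ encodes.

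The main obstacle is the invocation of relative Grothendieck--Serre duality in the DM stack setting and, in particular, the identification of the relative dualising complex with $\omega_{\mathcal{X}_\Sigma}\otimes f^*\omega_{X_\sigma}^{-1}$ concentrated in degree zero. For smooth tame DM stacks proper over an affine base this is standard (and smoothness of $\mathcal{X}_\Sigma$ ensures every object of $\dbcoh{\mathcal{X}_\Sigma}$ is perfect so the duality applies without finiteness caveats), but the bookkeeping is where any subtlety lies. Once granted, the explicit toric trivialisation via $\chi^{\mathfrak{m}_\sigma}$ immediately yields the claim.
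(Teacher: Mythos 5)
The paper itself does not reprove this lemma but imports it verbatim as Lemma A.3 of \v{S}penko--Van den Bergh, so there is no in-paper argument to compare against; what can be said is that your proposal is a correct proof and reflects the same mechanism the cited source exploits. The reduction via relative Grothendieck--Serre duality for the proper morphism $f:\mathcal{X}_\Sigma\to X_\sigma=\spec R$ to the triviality of $f^!\mathcal{O}_{X_\sigma}$, followed by the toric computation that $\langle\mathfrak{m}_\sigma,u_\rho\rangle=1$ for every ray forces both $\omega_{\mathcal{X}_\Sigma}$ (via $\operatorname{div}(\chi^{\mathfrak{m}_\sigma})=\sum_\rho D_\rho$) and $\omega_{X_\sigma}$ (via the Gorenstein element) to trivialise compatibly, is precisely the crepancy statement underlying the lemma. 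The bookkeeping point you flag is genuinely the only subtlety: since $f$ is not flat, one cannot literally define $\omega_{\mathcal{X}_\Sigma/X_\sigma}$ as a relative K\"ahler canonical, but because $X_\sigma$ is Gorenstein and $\mathcal{X}_\Sigma$ smooth of the same dimension, $f^!\mathcal{O}_{X_\sigma}$ is an invertible sheaf in degree zero and equals $\omega_{\mathcal{X}_\Sigma}\otimes f^*\omega_{X_\sigma}^{-1}$, which is what you need. You are also right that the displayed formula as printed ought to read $\Rhom_R\bigl(\Rhom_{\mathcal{X}_\Sigma}(\mathcal{G},\mathcal{F}),R\bigr)$ on the right-hand side to be the standard relative Serre duality; the slip is harmless in the paper's only application, where $\mathcal{F}=\mathcal{G}=\mathcal{T}$.
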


\subsection{Factorisation categories and VGIT}

In this paper, we present the Cox stacks as GIT quotients and we study and compare such stacks by varying the geometric invariant theory. This process, which we abbreviate as VGIT, is aided by expressing the derived categories in terms of \newterm{factorisation categories}, introduced here below. Good resources for more in-depth treatments of these categories include \cite{Hirano, BFK14}.

Let $k$ be an algebraically closed field of characteristic zero and let $X$ be a smooth variety over $k$ with an action of an affine algebraic group $g$ on it. Consider a $G$-invariant section $W$ of an invertible $G$-equivariant sheaf $\mathcal{L}$, i.e. $W\in\Gamma(X,\mathcal{L})^G$. The data $(X,G,W)$ is a \newterm{gauged Landau-Ginzburg (LG) model}. To a gauged LG model, we will now construct an associated \newterm{absolute derived category}.

\begin{definition}
    \label{Def:Fact}
    Let $\E_0, \E_1$ be two $G$-equivariant quasi-coherent sheaves and together with two morphisms $\phi^\E_0, \phi^\E_1$ fitting into the following sequence\[
    \E_1\xrightarrow{\phi_0^\E}\E_0\xrightarrow{\phi_1^\E}\E_1\otimes_{\O_X}\mathcal{L},
    \] 
    such that $\phi_1^\E\circ\phi_0^\E=W=(\phi_0^\E\otimes_{\O_X}\mathcal{L})\circ\phi_1^\E$. The data $(\E_0,\E_1,\phi^\E_0,\phi_1^\E)$ is called a \newterm{factorisation}.
\end{definition}

Given two factorisations $\E, \mathcal{F}$, we can define a complex of morphisms between them in the following way. Consider the graded vector space\[
\Hom(\E, \mathcal{F})^\bullet := \bigoplus_{n\in \Z} \Hom(\E, \mathcal{F})^n,
\]
where
\begin{equation*}\begin{aligned}
\Hom(\E, \mathcal{F})^{2m} &:= \Hom(\E_1, \mathcal{F}_1\otimes \mathcal{L}^{\otimes m}) \oplus \Hom(\E_0, \mathcal{F}_0\otimes \mathcal{L}^{\otimes m}), \\
\Hom(\E, \mathcal{F})^{2m+1} &:= \Hom(\E_1, \mathcal{F}_0\otimes \mathcal{L}^{\otimes m}) \oplus \Hom(\E_0, \mathcal{F}_1\otimes \mathcal{L}^{\otimes m+1}). \\
\end{aligned}\end{equation*}
In particular we note that a morphism $f:\mathcal{E}\rightarrow\mathcal{F}[n]$ is a pair $(f_0,f_1)$. 
The differential $d^i:  \Hom(\E, \mathcal{F})^i \to  \Hom(\E, \mathcal{F})^{i+1}$ is given by $d^i(f) = \phi^{\mathcal{F}}_{\star + i} \circ f - (-1)^i f \circ \phi^{\E}_{\star}$.
In this way, we obtain a dg category $\Fact{X,G,W}$ with objects factorisations and morphisms defined above. 
\begin{definition}
    Let $\fact{X,G,W}$ be the full dg-subcategory of $\Fact{X,G,W}$ whose components are coherent.
\end{definition}

 Let $Z^0\Fact{X,G,W}$ be the subcategory of $\Fact{X,G,W}$ with the same objects, but where we only allow for degree zero morphisms. Given a complex of objects in $Z^0\Fact{X,G,W}$, one can construct a new object $\mathcal{T}\in\Fact{X,G,W}$, called the \newterm{totalisation} of the complex.
Write the complex as \[
...\rightarrow \E^i\xrightarrow{f^i}\E^{i+1}\xrightarrow{f^{i+1}}\dots.
\]
The totalisation $\mathcal{T}\in\Fact{X,G,W}$ is given by the data\begin{eqnarray*}
    && T_0:= \bigoplus_{i=2l}\E_{0}^i\otimes_{\O_X}\mathcal{L}^{-l}\oplus\bigoplus_{i=2l+1}\E_1^i\otimes_{\O_X}\mathcal{L}^{-l};\\
    && T_1:= \bigoplus_{i=2l}\E^i_1\otimes_{\O_X}\mathcal{L}^{-l}\oplus\bigoplus_{i=2l-1}\E_0^i\otimes_{\O_X}\mathcal{L}^{-l};\\
    && \phi_0^{\mathcal{T}}:= \bigoplus_{i=2k}f_0^i\otimes\mathcal{L}^{-k}\oplus\bigoplus_{i=2k-1}f_1^i\otimes\mathcal{L}^{-k};\\
    && \phi_1^\mathcal{T}:= \bigoplus_{i=2k}f_1^i\otimes\mathcal{L}^{-k}\oplus\bigoplus_{i=2k-1}f_0^i\otimes\mathcal{L}^{-k}.\;
\end{eqnarray*}

Define $\operatorname{Acyc}(X,G,W)$ to be the full subcategory of $\Fact{X,G,W}$ consisting of all totalisations of bounded exact complexes in $Z^0\Fact{X,G,W}$, and let $\operatorname{acyc}(X,G,W)=\operatorname{Acyc}(X,G,W)\cap \fact{X,G,W}$.

\begin{definition}
    \label{Def:Dabs}
    The \newterm{absolute derived category} $\dabs{X,G,W}$ is the idempotent completion of the Verdier quotient of $\fact{X,G,W}$ by $\operatorname{acyc}(X,G,W)$.
\end{definition}

\noindent This definition is analogous to how the usual derived category of an abelian can be defined as idempotent completion of the Verdier quotient of the homotopy category by acyclic complexes. The absolute derived category $\dabs{X,G,W}$ can be thought of as the derived category of the gauged LG model $(X,G,W)$. To justify this claim, we must introduce some context and notation. 

\begin{notation}\label{geometric context notation}
Let $Y$ be a smooth quasi-projective variety with a $G$-action. Suppose that $s$ is a regular section of a $G$-equivariant vector bundle $\E$ on $Y$ with vanishing locus $Z := Z(s)$. Let $\mathbb{G}_m$ act on the total space $\tot \E^\vee$ of the dual bundle to $\E$  by fiberwise dilation (the so-called \newterm{$R$-charge}) and consider the pairing $W=\langle -,s\rangle$ as a section of $\O_{\tot \E^\vee}(\chi)$ where $\chi$ is the projection character. 
\end{notation}

We have the following theorem, which has appeared in various forms due to Orlov\cite{Orlov92}, Isik\cite{Isik}, Shipman\cite{Shipman}, and, in the form we use here, Hirano \cite{Hirano}.

\begin{theorem}[Proposition 4.8 of \cite{Hirano}]\label{Orlovs thm}
There exists an equivalence of categories
$$
\Omega: \dbcoh{[Z/G]} \stackrel{\sim}{\longrightarrow} \dabs{\tot \E^\vee, G \times \mathbb{G}_m, W}.
$$
\end{theorem}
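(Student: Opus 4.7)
The plan is to construct $\Omega$ by an explicit Koszul/matrix-factorization recipe and then verify it is an equivalence, following the template going back to Orlov. Let $p:\tot\E^\vee\to Y$ denote the bundle projection and $j:Y\hookrightarrow \tot\E^\vee$ the zero section. The zero section $Y$ is cut out in $\tot\E^\vee$ by a regular tautological section $\tau$ of $p^*\E^\vee$, while $p^*s\in\Gamma(\tot\E^\vee,p^*\E)$ is the pulled-back defining section of $Z$; their natural pairing is precisely $W$. For $\mathcal{F}\in\operatorname{coh}[Z/G]$ I would pick a finite $G$-equivariant locally free resolution $\mathcal{P}^\bullet\to i_*\mathcal{F}$ on $Y$, pull it back along $p$, and tensor it with the Koszul matrix factorization of $W$ whose underlying bigraded object is $\wedge^\bullet p^*\E$ and whose two differentials --- contraction with $\tau$ and wedging with $p^*s$ --- anticommute to give multiplication by $W$. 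Totalising this produces $\Omega(\mathcal{F})\in \dabs{\tot\E^\vee,G\times\mathbb{G}_m,W}$; the $\mathbb{G}_m$-grading arises because exterior powers of $p^*\E$ carry the weight equal to their degree, matching the weight of $W$.

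Next I would verify that $\Omega$ is well-defined at the level of the absolute derived category. Acyclic complexes on $Y$ produce totalisations of bounded exact sequences of factorizations, which belong to $\operatorname{acyc}$; independence of the chosen resolution is automatic. Everything is manifestly $G\times\mathbb{G}_m$-equivariant. For fully faithfulness, I would reduce to a set of compact generators of $\dbcoh{[Z/G]}$ --- for instance twists by characters of $G$ of a generating $G$-equivariant locally free sheaf on $Z$. On the factorization side, $\Rhom_{\dabs{\cdot}}(\Omega(\mathcal{F}),\Omega(\mathcal{G}))$ is computed by the internal Hom-complex of factorizations, and a spectral sequence arising from the Koszul filtration collapses onto $\Rhom_{[Z/G]}(\mathcal{F},\mathcal{G})$ using the Koszul acyclicity together with the projection formula along the affine morphism $p$.

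The hard part will be essential surjectivity. One must show that every factorization is isomorphic in $\dabs{\cdot}$ to $\Omega(\mathcal{F})$ for some $\mathcal{F}\in\dbcoh{[Z/G]}$. The key input is a support/generation argument: any factorization whose underlying complex is supported away from the critical locus of $W$ --- equivalently, away from $Z$ viewed as the intersection of $Y=\{\tau=0\}$ with $\{p^*s=0\}$ in $\tot\E^\vee$ --- is acyclic in $\dabs{\cdot}$, because on that open set $W$ becomes non-degenerate along the fiber direction and the Koszul complex for $W$ itself is exact. Combined with compact generation of $\dabs{\tot\E^\vee,G\times\mathbb{G}_m,W}$ by pullbacks of $G$-equivariant locally free sheaves on $Y$ twisted by $\mathbb{G}_m$-characters, this reduces essential surjectivity to identifying the image of $\Omega$ on the generators, which is a direct Koszul computation. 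Handling the full equivariance $G\times \mathbb{G}_m$ requires no new ideas beyond carrying the equivariant structure functorially through the Koszul construction, and the quasi-projective (as opposed to affine) geometry is accommodated by working throughout with bounded complexes of $G$-equivariant locally free sheaves.
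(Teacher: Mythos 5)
The paper does not prove this statement at all; it is cited verbatim as Proposition 4.8 of Hirano, with the text noting that various forms are due to Orlov, Isik, and Shipman. So there is no proof in the paper for your attempt to be compared against. That said, your sketch does reproduce the standard template from that literature: the Koszul matrix factorization $\bigl(\textstyle\bigwedge^\bullet p^\ast\E,\ \iota_\tau,\ p^\ast s\wedge -\bigr)$ with total differential squaring to $W$, the $\mathbb{G}_m$-weight on $\bigwedge^j p^\ast\E$ matching the $R$-charge, full faithfulness via the Koszul filtration and the projection formula along the affine map $p$, and essential surjectivity via a support argument. This is the right shape of argument.

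The gap is in essential surjectivity, which you acknowledge is the hard part but then compress into two assertions that do not combine the way you state. First, you claim $\dabs{\tot\E^\vee, G\times\mathbb{G}_m, W}$ is compactly generated by ``pullbacks of $G$-equivariant locally free sheaves on $Y$ twisted by $\mathbb{G}_m$-characters,'' and that surjectivity then reduces to ``identifying the image of $\Omega$ on the generators.'' But $\Omega$ of a generator of $\dbcoh{[Z/G]}$ is a Koszul-type factorization with components $\bigoplus_j p^\ast(\mathcal{P}^j\otimes\bigwedge^\bullet\E)$ for a chosen resolution $\mathcal{P}^\bullet$ of $i_\ast\mathcal{F}$ --- these are not pullbacks of sheaves on $Y$, so you cannot simply match generators on the nose. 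What is actually required is the converse direction: that the Koszul factorizations in the image of $\Omega$ themselves generate. That requires a devissage argument --- typically one shows any $\mathbb{G}_m$-equivariant factorization is built from those supported on the zero section and then identifies those with objects of $\dbcoh{[Z/G]}$ using the regularity of $s$ --- and this is precisely the nontrivial core of the Isik/Shipman/Hirano proofs, not a ``direct Koszul computation.'' Second, the support lemma you invoke (that factorizations supported away from $\operatorname{Crit}(W)$ vanish in $\dabs{\cdot}$) is itself a substantive theorem in the equivariant, quasi-projective setting and cannot be taken for granted; its proof depends on completeness/idempotent-completion properties of the absolute derived category that are part of the reason the statement is stated for $\dabs{\cdot}$ rather than a naive homotopy quotient.
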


 A consequence of Theorem \ref{Orlovs thm} is the following.
 \begin{corollary}[Corollary 2.3.12 in \cite{BFK19}]
     \label{Cor:0superpotential}
     There is an equivalence of categories \[
     \dbcoh{[X/G]}\simeq \dabs{X,G\times \mathbb{G}_m,0}.
     \]
 \end{corollary}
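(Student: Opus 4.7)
The plan is to obtain this as a direct specialization of Theorem \ref{Orlovs thm}. In the setup of Notation \ref{geometric context notation}, take $Y := X$ and let $\E$ be the rank-zero vector bundle on $X$, equipped with its unique $G$-equivariant structure. Then the unique section $s = 0$ is vacuously regular, its vanishing locus is $Z(s) = X$, the total space $\tot \E^\vee$ equals $X$ with trivial fiberwise $\mathbb{G}_m$-action, and the superpotential $W = \langle -, s\rangle$ vanishes identically. Substituting this data into Theorem \ref{Orlovs thm} yields
$$\dbcoh{[X/G]} = \dbcoh{[Z(s)/G]} \stackrel{\Omega}{\simeq} \dabs{\tot \E^\vee, G \times \mathbb{G}_m, W} = \dabs{X, G \times \mathbb{G}_m, 0},$$
which is the desired equivalence.

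If one prefers to avoid the rank-zero edge case, an equivalent direct construction is available. Since $\mathbb{G}_m$ acts trivially on $X$, a $G \times \mathbb{G}_m$-equivariant coherent sheaf is the same datum as a $\mathbb{Z}$-graded $G$-equivariant coherent sheaf, the grading being the $\mathbb{G}_m$-weight decomposition. The equivariant line bundle $\mathcal{L} = \O_X(\chi)$ is underlying trivial but has weight one, so a factorization for $W = 0$ is precisely a $2$-periodic $\mathbb{Z}$-graded complex of $G$-equivariant coherent sheaves. The totalisation construction recalled in the text then interchanges bounded $\mathbb{Z}$-graded complexes with such factorizations, supplying mutually quasi-inverse dg functors between $\fact{X, G \times \mathbb{G}_m, 0}$ and a dg enhancement of the category of bounded complexes in $\operatorname{coh}^G X$.

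The only substantive step is to verify that acyclicity matches on both sides, which is essentially tautological: $\operatorname{acyc}(X, G \times \mathbb{G}_m, 0)$ is by definition generated by totalisations of bounded exact complexes of factorizations, and under the folding correspondence these are exactly the bounded acyclic complexes of $G$-equivariant coherent sheaves. Passing to Verdier quotients and observing that $\dbcoh{[X/G]}$ is already idempotent complete closes the argument. The main obstacle, such as it is, is purely bookkeeping: one must track carefully how the $\mathbb{G}_m$-weight grading supplies the cohomological grading and how twisting by $\mathcal{L}$ implements the shift functor on the factorization side.
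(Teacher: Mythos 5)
Your proof is correct and aligns with the paper's framing. The paper cites this as Corollary 2.3.12 of \cite{BFK19} without supplying its own argument, prefaced only by the remark that it is a consequence of Theorem \ref{Orlovs thm}; your rank-zero specialization of that theorem is exactly the intended derivation, and your sanity checks (that $s=0$ is vacuously regular, $Z(s)=X$, $\tot\E^\vee = X$ with trivial $\mathbb{G}_m$-action, $W=0$) are the right ones. The fallback folding/unfolding argument you offer is a sound precaution: Theorem \ref{Orlovs thm} as quoted assumes $Y$ quasi-projective and in most treatments is applied to positive-rank bundles where a genuine Koszul resolution is in play, so the direct $\mathbb{G}_m$-weight unfolding, together with the acyclicity matching you flag, gives a self-contained argument that is in fact how \cite{BFK19} establish the statement.
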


Using the notions of absolute derived categories, together with the above Corollary \ref{Cor:0superpotential}, we can formulate a way to compare derived categories related by VGIT.

Let $\sigma \subseteq N_{\R}$ be a $\Q$-Gorenstein cone and $\nu \subseteq \sigma \cap N$ be a finite, geometric collection of lattice points which contains the (primitive) ray generators of $\sigma$. Partition the set $\nu$ into two subsets
\begin{equation}\begin{aligned}
\nu_{=1} = \{v \in \nu \ | \ \langle \mathfrak{m}_{\sigma}, v\rangle = 1\};\\
\nu_{\ne 1} = \{ v \in \nu \ | \ \langle \mathfrak{m}_{\sigma}, v \rangle \ne 1\}. 
\end{aligned}\end{equation}
Note that since $\sigma$ is $\Q$-Gorenstein, the ray generators of $\sigma$ are contained in $\nu_{=1}$. Consider two simplicial fans $\Sigma, \Psi$ with support $\cone(\nu)$. Fix a $S_{\nu}$-invariant\footnote{Recall the group $S_\nu$ as in the discussion preceding Definition \ref{Def:Coxstack}.} function $\bar{W}$ which is a global function on both of the affines $U_\Sigma, U_\Psi$.
\begin{theorem}[Theorem 5.8 of \cite{FK18}]\label{Thm:5.8inFK18}
Let $\Psi$ be any simplicial fan such that $\Psi(1) = \{ \cone{(v)} \ | \ v \in \nu\}$ and $X_{\Psi}$ is semiprojective. Similarly, let $\tilde \Sigma$ be any simplicial fan such that $\tilde\Sigma(1) \subseteq \nu_{=1}$, $X_{\tilde \Sigma}$ is semiprojective and $\cone{(\tilde{\Sigma}(1))} = |\Psi|$. We have the following:
\begin{enumerate}
\item If $\langle m_{\sigma}, a\rangle > 1$ for all $a \in \nu_{\ne 1}$, then there is a fully-faithful functor
$$
\dabs{U_{\tilde \Sigma} \times \mathbb{G}_m^{\nu \setminus \tilde\Sigma(1)}, S_{\Psi} \times \mathbb{G}_m, \bar W} \to \dabs{U_{\Psi}, S_{\Psi} \times \mathbb{G}_m, \bar W}.
$$
\item If $\langle m_{\sigma}, a\rangle < 1$ for all $a \in \nu_{\ne 1}$, then there is a fully-faithful functor
$$
\dabs{U_{\Psi}, S_{\Psi} \times \mathbb{G}_m, \bar W} \to \dabs{U_{\tilde \Sigma} \times \mathbb{G}_m^{\nu \setminus \tilde\Sigma(1)}, S_{\Psi} \times \mathbb{G}_m, \bar W}.
$$
\item If $\nu_{\ne 1} = \varnothing$, then there is an equivalence
$$
\dabs{U_{\tilde \Sigma} \times \mathbb{G}_m^{\nu \setminus \tilde\Sigma(1)}, S_{\Psi} \times \mathbb{G}_m, \bar W} \cong \dabs{U_{\Psi}, S_{\Psi} \times \mathbb{G}_m, \bar W}.
$$
\end{enumerate}
\end{theorem}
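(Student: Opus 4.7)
The plan is to realise both absolute derived categories as derived categories of GIT quotients (of factorisation type) for a single ambient action, coming from two different chambers in the GIT wall-and-chamber structure, and then apply the VGIT wall-crossing machinery for gauged Landau--Ginzburg models to compare them. First, I would identify both sides inside a common picture: the stack $[\mathbb{A}^{\nu}/S_\nu]$ with $S_\nu := S_\Psi\times\mathbb{G}_m$ acting via the Cox construction associated to the full set $\nu$, together with the $S_\nu$-invariant superpotential $\bar W$. The open subset $U_\Psi$ is the semistable locus for a character $\theta_\Psi$ determined by the fan $\Psi$, while $U_{\tilde\Sigma}\times\mathbb{G}_m^{\nu\setminus\tilde\Sigma(1)}$ is the semistable locus for a character $\theta_{\tilde\Sigma}$ that forces the coordinates indexed by $\nu\setminus\tilde\Sigma(1)\subseteq\nu_{\ne 1}$ to be non-zero. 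In general these two characters lie in different chambers of the GIT fan for $S_\nu\curvearrowright\mathbb{A}^\nu$, and any path between them crosses a sequence of walls each labelled by a single point $a\in\nu_{\ne 1}$.

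Second, at each wall I would invoke the BFK wall-crossing theorem (e.g.\ Theorem~3.5.2 in \cite{BFK14}, or its restatement for factorisation categories): crossing a wall produces a semi-orthogonal decomposition of one absolute derived category with the other as a component, the extra components being built from the fixed locus of the destabilising one-parameter subgroup $\lambda_a$. The direction of the inclusion and whether the extra components are trivial is controlled by the sign of the \emph{window-width discrepancy}
\[
\mu_a \;-\; \eta_a,
\]
where $\mu_a$ is the $\lambda_a$-weight of $\bar W$ and $\eta_a$ is the sum of $\lambda_a$-weights on the conormal bundle of the unstable stratum inside $\mathbb{A}^\nu$. If this quantity has a fixed sign across every wall, the composition of mutations yields a single fully faithful functor in the corresponding direction; if it vanishes at every wall, every step is an equivalence.

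Third, the key computation is the toric identification of these weights. The $R$-charge forces $\bar W$ to have weight $1$, while the $\lambda_a$-action on the coordinate corresponding to the point $a$ has weight essentially $\langle \mathfrak{m}_\sigma, a\rangle$, because the Gorenstein element $\mathfrak{m}_\sigma$ is precisely the linear form whose $1$-slice generates $\sigma$; for $a\in\nu_{=1}$ the pairing is $1$ and the wall disappears, while for $a\in\nu_{\ne 1}$ it records the height of $a$. Consequently $\mu_a-\eta_a$ has the same sign as $\langle\mathfrak{m}_\sigma,a\rangle-1$. This immediately gives cases (1) and (2): all walls are crossed in the same direction, producing the claimed fully faithful functors. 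Case (3) is the degenerate situation $\nu_{\ne 1}=\varnothing$, in which either there are no walls to cross, or every wall is balanced and contributes an equivalence.

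The main obstacle in making this rigorous is the careful bookkeeping of the extra $\mathbb{G}_m$-factors and the $R$-charge: one must check that the two GIT presentations really do interpolate via a sequence of elementary wall-crossings inside $\mathbb{A}^{\nu}$, and that at each wall the Hirano/BFK hypotheses (semi-projectivity, existence of $\lambda_a$-fixed components, agreement of weight conventions for the superpotential) are genuinely met; the semi-projectivity assumption on $X_\Psi$ and $X_{\tilde\Sigma}$ in the statement is precisely what ensures this. A secondary subtlety is matching the weight computation above with the categorical window estimate, which requires being careful about whether one measures weights of $\bar W$ or of the line bundle $\mathcal{L}$ it is a section of.
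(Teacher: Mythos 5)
This statement is imported verbatim from Favero--Kelly (Theorem~5.8 of \cite{FK18}); the present paper does not reprove it, so there is no in-paper proof to compare against. Your proposal is, nonetheless, essentially the strategy used in the cited reference: realize both sides as semistable loci for a single $S_\nu\times\mathbb{G}_m$-action on $\mathbb{A}^\nu$ equipped with the $R$-charge, interpolate through the GIT wall structure, and apply the BFK/Halpern--Leistner wall-crossing theorem for factorisation categories, with the direction of each semi-orthogonal embedding (or the triviality of the extra pieces) governed by the $\lambda_a$-weight comparison that reduces to the sign of $\langle\mathfrak{m}_\sigma,a\rangle-1$. Two points you flag as subtleties are indeed where the real work lies in \cite{FK18}: first, that a path between the two chambers can be arranged to cross a sequence of \emph{elementary} walls each controlled by a single element of $\nu_{\ne 1}$ requires an explicit analysis of the secondary fan (it is not automatic), and second, the semiprojectivity hypotheses are exactly what guarantee the relevant GIT chambers are nonempty and the unstable strata are well behaved at each step. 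Your sketch is faithful to the original argument; to make it rigorous you would need to supply that secondary-fan bookkeeping and pin down the weight conventions so that $\mu_a-\eta_a$ is literally $\langle\mathfrak{m}_\sigma,a\rangle-1$ (up to a positive scalar), rather than merely ``of the same sign.''
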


Combining the Theorem \ref{Thm:5.8inFK18} and Corollary \ref{Cor:0superpotential}, we obtain the following result.

\begin{corollary}
    \label{Cor:VGITforGor}
    Let $\Psi, \Sigma$ be two simplicial fans such that $u_\rho\in\nu_{=1}$ for $\rho\in\Sigma(1)\cup\Psi(1)$ (where $u_\rho$ denotes the primitive generators of the ray $\rho$), $|\Psi|=|\Sigma|=\cone(\conv(\nu_{=1}))$ and $X_{\Psi}, X_{\Sigma}$ semiprojective. Then\[
    \dbcoh{\mathcal{X}_\Psi}\cong\dbcoh{\mathcal{X}_{\Sigma}}
    \]
\end{corollary}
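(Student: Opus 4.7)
The plan is to chain Corollary \ref{Cor:0superpotential} together with the VGIT comparison of Theorem \ref{Thm:5.8inFK18} applied to a common ambient fan. First I would set $\nu := \Sigma(1)\cup\Psi(1)$, regarded as the finite collection of lattice points featuring in the setup preceding Theorem \ref{Thm:5.8inFK18}. Since by hypothesis every ray generator appearing in $\Sigma$ or $\Psi$ lies in $\nu_{=1}$, the complement $\nu_{\ne 1}$ is empty, which is precisely the condition required to apply case (3) of Theorem \ref{Thm:5.8inFK18}. I would then exhibit any simplicial, semiprojective fan $\hat\Psi$ whose ray set is exactly $\nu$ and whose support equals $\cone(\conv(\nu_{=1})) = |\Sigma| = |\Psi|$, obtained by choosing a regular triangulation of $\conv(\nu)$ with vertex set $\nu$ and coning it from the origin; enlarging $\nu$ by further points of $\nu_{=1}$ if needed keeps $\nu_{\ne 1}=\varnothing$.

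Next, I would invoke case (3) of Theorem \ref{Thm:5.8inFK18} twice with the trivial $S_{\hat\Psi}$-invariant superpotential $\bar W=0$: once with $\tilde\Sigma=\Sigma$ and once with $\tilde\Sigma=\Psi$, each paired with $\hat\Psi$ in the role of the ambient fan. Both applications are legitimate, since $\Sigma$ and $\Psi$ are simplicial and semiprojective, their rays lie in $\nu_{=1}$, and their supports coincide with $|\hat\Psi|$. Concatenating the two resulting equivalences yields
\[
\dabs{U_\Sigma \times \mathbb{G}_m^{|\nu\setminus\Sigma(1)|},\, S_{\hat\Psi}\times\mathbb{G}_m,\, 0} \;\cong\; \dabs{U_\Psi \times \mathbb{G}_m^{|\nu\setminus\Psi(1)|},\, S_{\hat\Psi}\times\mathbb{G}_m,\, 0}.
\]

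To return to the coherent picture, I would apply Lemma \ref{Lem:StackIso} on each side, with $\hat\Psi$ playing the role of the ambient fan and with $\Sigma$ (resp.\ $\Psi$) as the subfan, so that the two auxiliary quotient stacks are identified with $[U_\Sigma/S_\Sigma\times\mathbb{G}_m]$ and $[U_\Psi/S_\Psi\times\mathbb{G}_m]$ respectively, with the extra $\mathbb{G}_m$ matching the $R$-charge present in the factorisation description. Corollary \ref{Cor:0superpotential}, applied to the two extremes with vanishing superpotential, then gives the chain
\[
\dbcoh{\mathcal{X}_\Sigma} \;\cong\; \dabs{U_\Sigma,\,S_\Sigma\times\mathbb{G}_m,\,0} \;\cong\; \dabs{U_\Psi,\,S_\Psi\times\mathbb{G}_m,\,0} \;\cong\; \dbcoh{\mathcal{X}_\Psi}.
\]

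The main obstacle I anticipate is the construction of $\hat\Psi$: one must arrange that a simplicial, semiprojective fan with ray set exactly $\nu$ exists and that both $\Sigma$ and $\Psi$ can be realised as subfans of it, so that Lemma \ref{Lem:StackIso} genuinely applies on both sides. When no such common refinement exists for the initial $\nu$, the remedy is to enlarge $\nu$ by adjoining further lattice points of $\nu_{=1}$ until the combinatorics admits a simultaneous simplicial subdivision; one must then verify that the $\mathbb{G}_m$-factor used to compare the stacks is genuinely the $R$-charge, so that the two uses of Corollary \ref{Cor:0superpotential} and Lemma \ref{Lem:StackIso} fit together consistently rather than introducing a spurious extra grading.
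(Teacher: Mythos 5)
Your proposal is correct and follows essentially the same route as the paper: apply case (3) of Theorem \ref{Thm:5.8inFK18} twice against a common ambient simplicial semiprojective fan with trivial superpotential, then translate the resulting absolute derived categories back to $\dbcoh{\mathcal{X}_\Sigma}$ and $\dbcoh{\mathcal{X}_\Psi}$ via Lemma \ref{Lem:StackIso} and Corollary \ref{Cor:0superpotential}. The only cosmetic difference is that you take $\nu:=\Sigma(1)\cup\Psi(1)$ and allow for possible enlargement, whereas the paper simply takes $\nu'=\nu_{=1}$ from the outset (which already contains all rays of $\Sigma$ and $\Psi$ and supports a regular triangulation with the required ray set), so the enlargement step is never needed.
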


\begin{proof}
    Consider the set $\nu'=\nu_{=1}$. Then there exists a simplicial fan $\Phi$ such that $X_{\Phi}$ is semiprojective and $\{u_\rho\mid \rho\in\Phi(1)\}=\nu'$ (by abuse of notation we confound the primitive generators with lattice points of the same coordinates). 
    We note that $\nu'_{\ne 1}=\emptyset$, and so we can apply Theorem \ref{Thm:5.8inFK18} to obtain
    \[
    \dabs{U_\Sigma\times \mathbb{G}_m^{|\nu' \setminus \Sigma(1)|}, S_{\Phi} \times \mathbb{G}_m, 0} \cong \dabs{U_{\Phi}, S_{\Phi} \times \mathbb{G}_m, 0}\cong\dabs{U_\Psi\times \mathbb{G}_m^{|\nu' \setminus \Psi(1)|}, S_{\Phi} \times \mathbb{G}_m, 0}. 
    \]
    We note that by Corollary \ref{Cor:0superpotential}, $\dabs{U_\Sigma, S_\Sigma\times \mathbb{G}_m,0}\cong \dbcoh{\mathcal{X}_\Sigma}$, and analogously for $\Psi$. Thus, it suffices to show
    $ \dabs{U_\Sigma\times \mathbb{G}_m^{|\nu' \setminus \Sigma(1)|}, S_{\Phi} \times \mathbb{G}_m, 0}\cong \dabs{U_\Sigma, S_\Sigma\times \mathbb{G}_m,0}$, together with the corresponding equivalence for $\Psi$. These equivalences follow from the stack isomorphism $[U_\Sigma\times \mathbb{G}_m^{|\nu'\setminus\Sigma(1)|}/S_{\nu'}\times\mathbb{G}_m]\cong[U_\Sigma/S_\Sigma\times\mathbb{G}_m]$ (Lemma \ref{Lem:StackIso}).
\end{proof}

\section{Non-commutative crepant resolutions}
\label{sec:NCCR}

When encountering singular objects, as algebraic geometers we have two (often intertwined) options. Either we study the singularities for their own sake, or we attempt resolving them. In this section, we introduce a notion towards the second option, introduced by Van den Bergh. More in-depth treatments on the matter can be found in \cite{VdB04, VdB23, Leu12}.

\noindent Underlying the notion of non-commutative resolutions is the interplay of algebraic geometry and non-commutative rings, which can for instance be observed in the work of Beilinson \cite{Bei78}. An important concept in this context is that of a \newterm{tilting complex}.

\begin{definition}
    \label{Def:tilt}
     Let $Y$ be a Noetherian scheme. A \newterm{partial tilting complex} $\mathcal{T}$ on $Y$ is a perfect complex such that $\Ext^i_Y(\mathcal{T},\mathcal{T})=0$ for $i\neq 0$. A \newterm{tilting complex} is a partial tilting complex that \newterm{generates} $D_{Qch}(Y)$ in the sense that its right orthogonal is zero, i.e.
    $\Rhom_Y(\mathcal{T},\mathcal{F})=0$ implies $\mathcal{F}=0$. A \newterm{(partial) tilting bundle} is a (partial) tilting complex which is a vector bundle.
\end{definition}
The notion of tilting complexes and bundles has a natural generalisation to algebraic stacks. For an algebraic stack $\mathcal{Y}$, we define a perfect complex $\mathcal{T}$ to be partial tilting if $\Ext_{\mathcal{Y}}^i(\mathcal{T},\mathcal{T}))=0$ for $i>0$. The perfect complex $\mathcal{T}$ is tilting if furthermore it generates $\operatorname{D}_{Qch}(\mathcal{Y})$, which for smooth, separated noetherian DM stacks equates the usual generating property for $\dbcoh{\mathcal{Y}}$;

The following (simplified version of a) powerful result displays the links between the algebraic geometry and non-commutative ring theory.
\begin{theorem}[Theorem 1.2 in \cite{KK20}, 1.7 in \cite{VdB23}]
    \label{Thm:KK20Thm1.2}
    If $\mathcal{T}$ is a tilting complex on a noetherian scheme $Y$ then $\Rhom_Y(\mathcal{T},-)$ defines an equivalence of categories between $\operatorname{D}_{Qch}(Y)$ and $\operatorname{D}(\Lambda^\circ)$ for $\Lambda=\End_Y(\mathcal{T})$. Moreover, if $Y$ is regular then $\Lambda$ has finite global dimension. If furthermore, $\Lambda$ is right noetherian then $\Rhom_Y(\mathcal{T},-)$ restricts to an equivalence of categories\[
    \dbcoh{Y}\simeq \dbmod\Lambda
    \]
\end{theorem}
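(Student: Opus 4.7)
The plan is to adapt the classical tilting machinery of Rickard, Bondal and Van den Bergh to the noetherian scheme setting. Since $\mathcal{T}$ is perfect, it is a compact object of $\operatorname{D}_{Qch}(Y)$, so the functor $F:=\Rhom_Y(\mathcal{T},-):\operatorname{D}_{Qch}(Y)\rightarrow\operatorname{D}(\Lambda^\circ)$ commutes with arbitrary coproducts and admits a left adjoint $G:=-\otimes^{L}_{\Lambda}\mathcal{T}$. The partial tilting hypothesis $\Ext^i_Y(\mathcal{T},\mathcal{T})=0$ for $i\neq 0$ gives $F(\mathcal{T})\cong \Lambda$ concentrated in degree zero, while $G(\Lambda)\cong \mathcal{T}$ is tautological.

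The heart of the argument is then to upgrade these identifications on $\mathcal{T}$ and $\Lambda$ to an equivalence of the whole unbounded derived categories. For the unit $\eta:\mathrm{id}\rightarrow FG$, the full triangulated subcategory of $\operatorname{D}(\Lambda^\circ)$ on which $\eta$ is an isomorphism is closed under coproducts (since both $F$ and $G$ preserve them) and contains the compact generator $\Lambda$, hence is the whole category. For the counit $\varepsilon:GF\rightarrow\mathrm{id}$, I would take the cone $\mathcal{C}$ of $\varepsilon_{\mathcal{F}}$ for an arbitrary $\mathcal{F}\in\operatorname{D}_{Qch}(Y)$; applying $F$ and using that $\varepsilon$ is already an isomorphism on $\mathcal{T}$ together with the triangle identities yields $\Rhom_Y(\mathcal{T},\mathcal{C})=0$, and the generating hypothesis (right orthogonal to $\mathcal{T}$ is zero) then forces $\mathcal{C}=0$.

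For the finiteness statements, suppose $Y$ is regular so that every coherent sheaf admits a finite locally free resolution and $\dbcoh{Y}$ coincides with the category of perfect complexes. The derived equivalence matches compact objects on the two sides, so $\Lambda$ itself is perfect over $\Lambda$ (corresponding to the perfect $\mathcal{T}$). When $\Lambda$ is further right noetherian, each finitely generated $\Lambda^\circ$-module corresponds under $G$ to a bounded coherent complex on $Y$; transporting the bounded cohomological amplitude of $\Rhom_Y$ on $\dbcoh{Y}$ (a consequence of regularity) across the equivalence yields $\Ext^i_{\Lambda^\circ}(M,N)=0$ for $i\gg 0$ and all finitely generated $M,N$. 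This gives both $\gldim\Lambda<\infty$ and the restricted equivalence $\dbcoh{Y}\simeq\dbmod\Lambda$.

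The principal obstacle is the devissage step in the second paragraph: one must carefully justify that $F$ and $G$ preserve arbitrary coproducts in the unbounded derived setting, a property ultimately resting on the perfectness of $\mathcal{T}$ as the source of its compactness, and the ``right orthogonal is zero'' formulation of the generating hypothesis is precisely calibrated to make the cone of the counit vanish. The finiteness arguments, while conceptually clean, also require attention to the distinction between arbitrary and finitely generated modules when computing $\gldim\Lambda$, which is why the noetherian hypothesis is invoked to reduce to the compact-object regime where the equivalence can be combined with regularity of $Y$.
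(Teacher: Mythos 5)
The paper does not prove this statement: it is cited verbatim from the references (Theorem~1.2 of \cite{KK20} and 1.7 of \cite{VdB23}) and used as a black box. There is therefore no ``paper proof'' to compare against, and your argument has to be assessed on its own.

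Your dévissage establishing the equivalence $\operatorname{D}_{Qch}(Y)\simeq\operatorname{D}(\Lambda^\circ)$ is the standard Rickard--Bondal--Van den Bergh argument and is correct: $\mathcal{T}$ perfect gives compactness, $G=-\otimes^L_\Lambda\mathcal{T}$ is the left adjoint, the locus where the unit is an isomorphism is a localizing subcategory containing the compact generator $\Lambda$, and the counit cone dies by the generation hypothesis. That is exactly how the references prove the first claim.

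The finiteness discussion, however, has a genuine circularity. You assert that ``each finitely generated $\Lambda^\circ$-module corresponds under $G$ to a bounded coherent complex on $Y$,'' and then transport bounded $\Ext$-amplitude back. But $G(M)=M\otimes^L_\Lambda\mathcal{T}$ lies in $\dbcoh{Y}\simeq\operatorname{Perf}(Y)$ if and only if $M$ is a compact object of $\operatorname{D}(\Lambda^\circ)$, i.e.\ a perfect $\Lambda^\circ$-complex, i.e.\ $M$ has finite projective dimension --- which is precisely what you are trying to prove. Noetherianity of $\Lambda$ does not cure this: a finitely generated module over a noetherian ring is compact in $\operatorname{D}^{\operatorname{b}}(\operatorname{mod})$ but not in $\operatorname{D}(\Lambda^\circ)$ unless it is perfect. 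A correct route is to show directly that $G(M)$ has cohomology bounded in a fixed range depending only on $\mathcal{T}$ and $\dim Y$ (using that $M$ is a module, $\mathcal{T}$ has finite amplitude, and regularity of $Y$ bounds Tor-amplitude), so that $\operatorname{pd}_\Lambda(M)$ is uniformly bounded; by Auslander's theorem on cyclic modules this already yields $\gldim\Lambda<\infty$ without invoking noetherianity of $\Lambda$. Note also that the theorem as stated derives finite global dimension from regularity of $Y$ alone, whereas your sketch only gets it with the extra noetherian hypothesis, so even if the circularity were repaired you would be proving a weaker statement than the one cited.
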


From hereon, let $R$ be a normal noetherian domain with quotient field $K$. A reflexive Azumaya $R$-algebra in codimension one is called a \newterm{reflexive Azumaya algebra}. Recall here that an Azumaya algebra is an algebra such that $A^{op}\otimes_R A$ is Morita equivalent to $R$. We say that a reflexive Azumaya algebra $\Lambda$ is trivial if it is of the form $\End_R(M)$ for some reflexive $R$-module $M$. Then $\operatorname{ref}(R)$ and $\operatorname{ref}(\Lambda)$ are equivalent.

\begin{definition}
    \label{Def:NCCR}
    A \newterm{non-commutative resolution} of $R$ is a trivial reflexive Azumaya algebra $\Lambda=\End_R(M)$ for $M\in \operatorname{ref}R$ such that $\gldim\Lambda<\infty$. Assuming further that $R$ is Gorenstein, we call $\Lambda$ \newterm{crepant} if it is additionally a Cohen-Macaulay $R$-module.
\end{definition}
 We will often abbreviate the term non-commutative crepant resolution with \newterm{NCCR}.

 \begin{remark} This definition captures the concept of crepant resolutions as we think of them geometrically in a non-commutative manner. In particular, the properties characterising resolutions that we aim to substitute are regularity and crepancy. 
Regularity corresponds to the condition that the global dimension is finite, and being crepant corresponds to being Cohen-Macaulay. Details of this can be found in $\S4$ in \cite{VdB04}.

The definitions above can also be understood for (potentially non-affine) schemes, by reducing them to the affine patches.
\end{remark}

As detailed in \cite{VdB23}, since the introduction of the concept by Van den Bergh, NCCRs have been constructed in a variety of cases. Notable examples are the construction of NCCRs for non-commutative resolutions of quotient singularities for reductive groups \cite{SVdB17}, for affine toric simplicial cones \cite{FMS19}, and for $\le 3$ dimensional affine Gorenstein toric varieties. The methodology is varied, ranging from using (partial) tilting complexes on crepant resolutions, to representation theory and mutation theory (see e.g. \cite{SVdB17, IW14a}) and the use of combinatorial objects like dimer models in toric cases \cite{Broomhead}.

In this paper, we will focus on the case of affine Gorenstein toric varieties, aiming to make progress towards answering the following Conjecture.
\begin{conjecture}
    \label{Conj:affinetoric}
    An affine Gorenstein toric variety always has an NCCR.
\end{conjecture}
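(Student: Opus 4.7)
The plan is to reduce any affine Gorenstein toric variety to the setup of Theorem \ref{Prop:Parttiltingworks} and then to produce the required partial tilting data by combinatorial dissection of the defining polytope. After rebasing coordinates via the Gorenstein element $\mathfrak{m}_\sigma$ as described following Definition \ref{Def:QGor}, I may assume $R = k[\sigma^\vee \cap M]$ with $\sigma = \cone(P \times \{1\})$ for a lattice polytope $P \subset N_\R$. Choose a regular triangulation of $P$ and let $\Sigma$ be the induced fan refining $\sigma$; its ray generators all lie in $P \times \{1\}$ and $\mathcal{X}_\Sigma$ is a smooth DM stack by Theorem \ref{Thm:4.12FK18}. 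By Theorem \ref{Prop:Parttiltingworks}, it then suffices to exhibit a partial tilting complex $\mathcal{T}$ on $\mathcal{X}_\Sigma$ with $\gldim \End_{\mathcal{X}_\Sigma}(\mathcal{T}) < \infty$.

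The construction of $\mathcal{T}$ I would approach in stages, peeling off successively more difficult combinatorial geometries. When $\sigma$ is itself simplicial, take $\Sigma = \sigma$ and invoke \cite{FMS19, BBB+} for a tilting bundle. When $\dim \sigma \le 3$, apply \cite{Broomhead, SVdBtoricII}. When $\operatorname{Int}(P)$ contains a lattice point, I would pass to the canonical-bundle presentation, identifying $\sigma$ with $|\Sigma_\mathcal{V}|$ for $\mathcal{V} = \tot \omega_{X_\Sigma}$ over a projective simplicial toric variety $X_\Sigma$, and apply Corollary \ref{Cor:ParttiltCplxCanBdl}; the Novakovi\'c-type vanishing statements in Corollary \ref{Cor:Nova+myresult} and Theorem \ref{Thm:NovaDMstack} are the workhorses that propagate a tilting object from $\mathcal{X}_\Sigma$ up to $\mathcal{X}_{\Sigma_\mathcal{V}}$. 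Finally, for reflexive $P$ with at most $\dim P + 2$ vertices, one appeals to the Borisov--Hua construction \cite{BH09} as in Theorem \ref{Thm:FanoAlmostSimplicial}.

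The main obstacle is twofold. First, outside the global-quotient regime $[X/G]$ with $X$ smooth projective (the hypothesis of Corollary \ref{Cor:Nova+myresult}), controlling $\gldim \End(\mathcal{T})$ is delicate, since partial tilting alone does not force finite global dimension; this is precisely the content of Conjecture \ref{Cor:NovaDMparttilt}, whose resolution would be required to push the canonical-bundle approach to arbitrary simplicial fans. Second, and more fundamentally, the strategy above only addresses polytopes $P$ with interior lattice points (or very restricted combinatorial type), so lattice polytopes $P$ with $\operatorname{Int}(P) \cap N = \varnothing$ remain outside the scope of this machinery and would require a genuinely new construction of tilting data on $\mathcal{X}_\Sigma$. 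It is these two gaps that prevent us from upgrading the statement from a conjecture to a theorem here, and the later sections of the paper should be read as systematically chipping away at the first gap within the canonical-bundle framework rather than closing the conjecture outright.
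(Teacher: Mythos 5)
You correctly identify that this is an open conjecture, not a theorem, and your overall mechanism --- reduce to Theorem \ref{Prop:Parttiltingworks} by constructing a partial tilting complex of finite global dimension on $\mathcal{X}_\Sigma$ for some regular triangulation $\Sigma$ of $\sigma$ --- is indeed the paper's central engine. Your case list (simplicial via \cite{FMS19, BBB+}, dimension $\le 3$ via \cite{Broomhead, SVdBtoricII}, interior lattice point via Corollary \ref{Cor:ParttiltCplxCanBdl}/Theorem \ref{Thm:NovaDMstack}, reflexive with $\le n+2$ vertices via Theorem \ref{Thm:FanoAlmostSimplicial}) faithfully tracks the partial results of \S\ref{sec:CanBdls}--\S\ref{sec:refl}, and the first gap you flag (controlling $\gldim$ outside the global-quotient setting) is exactly Conjecture \ref{Cor:NovaDMparttilt}.

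Where your proposal genuinely diverges, and loses something, is in writing off the case $\operatorname{Int}(P)\cap N=\varnothing$ as ``requiring a genuinely new construction'' outside the reach of the canonical-bundle machinery. The paper's closing reduction, Proposition \ref{Thm:IFQnandConjHAVEvdB}, shows that this case is \emph{not} treated as a separate dead end: by Proposition \ref{Prop:FaceOfReflexive}, every lattice polytope $Q$ is lattice-equivalent to a face of a reflexive polytope $P$, and reflexive polytopes always contain the origin as an interior lattice point, so $P$ does fall within your interior-point framework. The missing ingredient is not a new tilting construction but a \emph{descent} mechanism, Conjecture \ref{Conj:FaceGivesNCCR}, which would transport an NCCR for $k[\cone(P\times\{1\})^\vee\cap M']$ down to one for $k[\cone(Q\times\{1\})^\vee\cap M]$ along the face inclusion. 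The paper thus stratifies the conjecture by (i) tilting data on simplicial stacky Fanos (Question \ref{Qn:FanoSimplHasTilting} and Conjecture \ref{Conj:FanoVanishingII}) and (ii) face descent (Conjecture \ref{Conj:FaceGivesNCCR}), rather than by whether $P$ has an interior lattice point. Your roadmap misses this reflexive-embedding reduction, which is the paper's main strategic step towards the full conjecture.
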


Consider a full-dimensional strongly convex rational cone $\sigma\subset M_{\R}$. This defines an algebra $R_\sigma=k[\sigma^\vee\cap M]$, called a \newterm{toric} algebra, and every affine toric variety arises this way as $X=\spec R_\sigma$ for some cone $\sigma$. Such an algebra is called \newterm{Gorenstein} if the cone $\sigma$ is a Gorenstein cone.

An NCCR is said to be \newterm{toric} if the reflexive module defining the NCCR is isomorphic to a sum of ideals. Broomhead answered the 3-dimensional case of Conjecture \ref{Conj:affinetoric} using dimer models, which has later been reproven by \v{S}penko and Van den Bergh by constructing tilting bundles on refinements of the underlying Gorenstein cone.
\begin{theorem}[Theorem 8.6 \cite{Broomhead}]
    \label{Thm:Broomhead}
    The coordinate ring of a 3-dimensional Gorenstein affine toric variety admits a toric NCCR.
\end{theorem}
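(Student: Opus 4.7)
The plan is to deduce this from the paper's main theorem, Theorem \ref{Prop:Parttiltingworks}, once an appropriate tilting bundle has been constructed on a crepant resolution. Begin by normalising: any $3$-dimensional Gorenstein affine toric variety has the form $\spec R$ with $R = k[\sigma^\vee\cap M]$ and $\sigma = \cone(P\times\{1\})$ for a $2$-dimensional lattice polygon $P\subset \R^2$. Pick a regular triangulation of $P$ using \emph{every} lattice point of $P$; because $P$ is $2$-dimensional, Pick's theorem forces each triangle to have area $1/2$, hence each cone of the refining fan $\Sigma$ is unimodular and thus smooth. In particular $\mathcal{X}_{\Sigma} = X_{\Sigma} =: Y$ is a smooth toric $3$-fold and the induced map $\pi : Y \to \spec R$ is a projective, birational, crepant resolution.

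Next, produce a tilting bundle $\mathcal{T} = \bigoplus_{\chi\in\mathcal{C}}L_{\chi}$ on $Y$ as a direct sum of torus-equivariant line bundles. A natural choice, in the spirit of \v{S}penko--Van den Bergh and of the VGIT framework recalled in Theorem \ref{Thm:5.8inFK18}, is to take $\mathcal{C}$ to be a finite collection of characters of $S_{\Sigma}$ lying inside the quasi-symmetric window cut out by the ray generators $\{u_\rho\}$ in $\Sigma(1)$. One then verifies $\Ext^i_Y(\mathcal{T},\mathcal{T}) = 0$ for $i > 0$ and that $\mathcal{T}$ generates $\dbcoh{Y}$; both statements reduce, via toric vanishing for line bundles on the smooth complete fibres of $\pi$, to combinatorial conditions on the triangulation which are verifiable by hand since $\dim P = 2$.

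With $\mathcal{T}$ in hand, set $\Lambda = \End_Y(\mathcal{T})$. Smoothness of $Y$ together with Theorem \ref{Thm:KK20Thm1.2} ensures $\gldim\Lambda < \infty$, so the hypotheses of Theorem \ref{Prop:Parttiltingworks} are met and $\Lambda$ is an NCCR for $R$. To see that it is \emph{toric}, observe that each $L_\chi$ is $T$-equivariant, so $\pi_*L_\chi =: I_\chi$ is a torus-invariant reflexive fractional ideal of $R$; by the projection formula
\[
\Lambda \;\cong\; \End_R\!\bigl(\textstyle\bigoplus_{\chi\in\mathcal{C}} I_\chi\bigr),
\]
which exhibits $\Lambda$ as the endomorphism ring of a sum of ideals, as required.

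The principal obstacle is the second step: showing that a tilting bundle of line bundles exists on $Y$, equivalently, finding a window $\mathcal{C}$ realising both higher $\Ext$ vanishing and generation. For general smooth toric $3$-folds this can fail (Hille--Perling), so one must exploit the special geometry of our $Y$, namely that $\pi$ is projective crepant over an affine base and that $\omega_Y \cong \pi^*\omega_X$ is pulled back from $X$. Broomhead's original proof handled precisely this obstruction combinatorially, by manufacturing a \emph{consistent dimer model} on the real two-torus whose perfect matchings correspond to lattice points of $\partial P$ and whose Jacobian algebra is identified with $\End_R(\bigoplus_\chi I_\chi)$; the alternative sketched here, in line with the present paper, is to construct the quasi-symmetric window directly and then invoke Theorem \ref{Prop:Parttiltingworks} to translate the tilting statement on $Y$ into the NCCR statement for $R$.
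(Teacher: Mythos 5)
First, note that the paper does not prove Theorem~\ref{Thm:Broomhead}; it is cited as a result of Broomhead, with an alternative proof by \v{S}penko--Van den Bergh sketched in the text following Proposition~\ref{Prop:3.3SVDBtoricII}: embed $P$ into a larger polygon already known to carry a tilting bundle on a crepant resolution (via Gulotta \cite{Gul} and Ishii--Ueda \cite{IU08,IU15,IU16}), then restrict, a step that is explicitly flagged as non-trivial because tilting is not a local property. There is therefore no internal proof to compare against, and your write-up is being measured purely on its own claims.

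Your argument has a genuine gap at its central step, which you yourself concede in the final paragraph: you never actually produce the tilting bundle on the crepant resolution $Y$. The specific shortcut you offer does not work. Taking $\mathcal{C}$ ``inside the quasi-symmetric window'' presupposes that the $S_\Sigma$-weight system coming from the Cox construction of a unimodular triangulation of a general lattice polygon is quasi-symmetric, and it is not; the quasi-symmetric case treated in \cite{SVdB17} is a special class that three-dimensional Gorenstein toric cones do not in general fall into, which is precisely why Broomhead needed consistent dimer models and \cite{SVdBtoricII} needed the embedding-and-restriction argument. The claim that $\Ext$-vanishing and generation ``reduce, via toric vanishing for line bundles on the smooth complete fibres of $\pi$, to combinatorial conditions verifiable by hand'' is likewise unsubstantiated: $\pi\colon Y\to\spec R$ is proper birational onto an affine base, so the generic fibre is a point and the exceptional locus has a non-uniform fibre structure, and this is exactly where tilting obstructions of the Hille--Perling type \cite{HP14} can arise. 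What is correct and reusable in your proposal is the reduction to $\sigma=\cone(P\times\{1\})$, the unimodular regular triangulation producing a smooth crepant $Y$, the appeal to Theorem~\ref{Prop:Parttiltingworks} once a tilting object is in hand, and the observation that $\pi_\ast L_\chi$ are torus-invariant reflexive fractional ideals, giving toricness. But the existence of the tilting bundle on $Y$ is the whole content of the theorem, and your sketch does not establish it.
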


For their alternative proof of this result, \v{S}penko and Van den Bergh rely on the following Proposition, which we generalise in the form of Theorem \ref{Prop:Parttiltingworks}.
\begin{proposition}[Proposition 3.3 in \cite{SVdBtoricII}]
    \label{Prop:3.3SVDBtoricII}
    Given a Gorenstein cone $\sigma$, write it in the form $\cone(P\times\{1\})$ for $P$ a lattice polytope. Choose a \textbf{regular} triangulation of $P$ without extra vertices and let $\Sigma$ be the corresponding fan. Let $\mathcal{T}$ be a tilting bundle on $\mathcal{X}_{\Sigma}$, the associated DM stack. Then $\Lambda=\End_{\mathcal{X}_{\Sigma}}(\mathcal{T})$ is an NCCR for $R=k[\sigma^\vee\cap M]$ corresponding to $M'=\Gamma(\mathcal{X}_{\Sigma},\mathcal{T})$.
\end{proposition}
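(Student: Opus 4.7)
The plan is to verify the three defining axioms of an NCCR for $\Lambda$ with the candidate module $M' := \Gamma(\mathcal{X}_\Sigma, \mathcal{T})$: that $M'$ is a reflexive $R$-module with $\Lambda \cong \End_R(M')$, that $\gldim \Lambda < \infty$, and that $\Lambda$ is maximal Cohen--Macaulay over $R$. The geometric setup comes essentially for free from the hypotheses. Since the triangulation is regular, $\Sigma$ is simplicial with semiprojective $X_\Sigma$, so by Theorem \ref{Thm:4.12FK18} the Cox stack $\mathcal{X}_\Sigma$ is a smooth Deligne--Mumford stack. Because the triangulation has no extra vertices, every primitive ray generator satisfies $\langle \mathfrak{m}_\sigma, u_\rho\rangle = 1$, so the natural morphism $\pi : \mathcal{X}_\Sigma \to X_\sigma = \spec R$ is projective, birational, and crepant, and is an isomorphism over an open $U \subseteq X_\sigma$ whose complement has codimension at least two (the stacky locus sits in codimension $\geq 2$ since one-dimensional cones are automatically smooth and no new rays are introduced).

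Finite global dimension of $\Lambda$ is immediate from Theorem \ref{Thm:KK20Thm1.2}: the tilting bundle $\mathcal{T}$ on the regular stack $\mathcal{X}_\Sigma$ produces a derived equivalence $\Rhom(\mathcal{T}, -): \operatorname{D}_{Qch}(\mathcal{X}_\Sigma) \simeq \operatorname{D}(\Lambda^\circ)$, and regularity of $\mathcal{X}_\Sigma$ transfers to finite global dimension of $\Lambda$. The CM property is a formal consequence of the triviality of the relative Serre functor (Lemma \ref{Lem:RelSerFun}): taking $\mathcal{F} = \mathcal{G} = \mathcal{T}$ gives
\[
\Lambda \,\cong\, \Rhom_{\mathcal{X}_\Sigma}(\mathcal{T},\mathcal{T}) \,\cong\, \Rhom_R\bigl(\Rhom_{\mathcal{X}_\Sigma}(\mathcal{T},\mathcal{T}),R\bigr) \,\cong\, \Rhom_R(\Lambda,R).
\]
Since $\mathcal{T}$ is tilting the left-hand side is concentrated in degree zero, so $\Ext^i_R(\Lambda,R) = 0$ for $i > 0$, which is exactly the CM condition over the Gorenstein ring $R$.

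The remaining step---and the one I expect to be the principal obstacle---is identifying $\Lambda$ with $\End_R(M')$ for a reflexive $M'$. Reflexivity of $M'$ follows because $\pi$ is an isomorphism over $U$: the restriction $M'|_U$ coincides with the vector bundle $\mathcal{T}|_{\pi^{-1}(U)}$ and is therefore locally free, and normality of $X_\sigma$ combined with $\operatorname{codim}(X_\sigma \setminus U) \geq 2$ forces $M' \cong j_*(M'|_U)$ for $j : U \hookrightarrow X_\sigma$, which is reflexive. The canonical comparison map
\[
\Lambda \,=\, \pi_* \mathcal{H}om_{\mathcal{X}_\Sigma}(\mathcal{T}, \mathcal{T}) \,\longrightarrow\, \mathcal{H}om_R(M', M') \,=\, \End_R(M')
\]
is visibly an isomorphism over $U$; since both sides are reflexive $R$-modules (the left because CM over Gorenstein implies reflexive; the right because $\End$ of a reflexive module is reflexive), they are determined by their restriction to $U$ and the map is therefore a global isomorphism. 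The subtle point is verifying that the pushforward of the internal Hom genuinely recovers the Hom of pushforwards over $U$---which reduces to $\pi|_{\pi^{-1}(U)}$ being an honest scheme isomorphism---and ensuring that the stacky structure of $\mathcal{X}_\Sigma$ in codimension $\geq 2$ does not obstruct the reflexivity/CM comparison downstairs. It is precisely here that the regularity of the triangulation and the no-extra-vertices hypothesis do their essential work.
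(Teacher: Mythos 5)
Your proof is correct in substance but takes a genuinely different route from the one the paper uses for the generalization Theorem~\ref{Prop:Parttiltingworks} (the paper only cites Proposition~\ref{Prop:3.3SVDBtoricII} without proof). Two of your three steps match the paper's: finite global dimension from the tilting equivalence on the smooth DM stack (as in Corollary~\ref{Cor:TiltCplxworks}), and CM via the trivial relative Serre functor of Lemma~\ref{Lem:RelSerFun}. Where you diverge is the identification $\Lambda \cong \End_R(M)$: the paper never produces the specific module but instead embeds $\dbcoh{\mathcal{X}_\Sigma}$ fully faithfully into $\dbcoh{X_{\Sigma_\Delta}}$ for an honest smooth toric resolution $X_{\Sigma_\Delta}$ of $X_\sigma$ (via Theorem~\ref{Thm:5.8inFK18}), localizes at height-one primes using Lemma~\ref{Lem:4.3IW14}, and then invokes the Auslander--Goldman criterion (Proposition~\ref{Prop:Prop4.4IW}) to conclude $\Lambda \cong \End_R(M)$ for \emph{some} reflexive $M$. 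You instead exploit smallness of $\pi : \mathcal{X}_\Sigma \to X_\sigma$ (no new rays, stacky locus in codimension $\geq 2$) to identify $M' = \Gamma(\mathcal{X}_\Sigma,\mathcal{T})$ directly and prove the comparison map $\Lambda \to \End_R(M')$ is an isomorphism of reflexive modules agreeing in codimension one. Your route is closer to \v{S}penko--Van den Bergh's original argument and proves the precise statement including the identification of $M'$, which the paper's abstract criterion does not; the paper's route buys robustness, as it applies to partial tilting complexes (where $R\pi_*\mathcal{T}$ need not be a single reflexive sheaf). One imprecision to flag in your writeup: the isomorphism $M' \cong j_*(M'|_U)$ does not follow from normality of $X_\sigma$ and $\operatorname{codim}(X_\sigma \setminus U) \ge 2$ alone --- that only gives reflexivity of $j_*(M'|_U)$, not that the natural map from $M'$ is an isomorphism. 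The correct justification is a Hartogs extension on the \emph{source}: $\mathcal{X}_\Sigma$ is regular, $\mathcal{T}$ is locally free hence reflexive, and $\pi^{-1}(X_\sigma \setminus U)$ has codimension $\geq 2$ in $\mathcal{X}_\Sigma$, so $\Gamma(\mathcal{X}_\Sigma,\mathcal{T}) = \Gamma(\pi^{-1}(U),\mathcal{T}) = \Gamma(U, M'|_U)$; this is also the step that genuinely uses $\mathcal{T}$ being a vector bundle rather than a complex.
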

\begin{remark}
We note here that we added the word \textbf{regular} to the statement as it would be found in \cite{SVdBtoricII}. The reason is simple: In their proof, the authors assume that the triangulation yields a fan then giving a smooth toric DM stack. However, such simplicial fans are in bijection with regular triangulations of the set of vertices of $P$ (see \cite{CLS}, Proposition 15.2.9 and generally $\S 14, 15$). This was also noted in the formulation of Lemma 3.5 in \cite{VdB23}, a paper which gives a wonderful summary of the theory surrounding NCCRs.
Let us here remark that in the case of two-dimensional polytopes $P$, which is ultimately the application of Proposition \ref{Prop:3.3SVDBtoricII} in \cite{SVdBtoricII}, all triangulations of $P$ not using any interior vertices are, in fact, regular. This can be proven by induction on the number of vertices (with the case of exactly $\dim P+1$ vertices being trivial), noting that any triangulation by the two-ears theorem contains an "exterior'' triangle, i.e. a vertex with exactly two outgoing edges (connected to the adjacent vertices). Applying the induction hypothesis and Lemma 4.5 in \cite{KM24} shows that the chosen triangulation of $P$ was indeed a regular triangulation. 

In dimensions higher than 2, this fails. For example, in dimension 3 there is a convex polytope known as "capped triangular prism", which allows for 2 non-regular triangulations not using interior vertices (see $\S$ 16.3.1 in  \cite{HandbookDiscrete}).

\end{remark}

The proof of Theorem \ref{Thm:Broomhead} presented in \cite{SVdBtoricII} now reduces to showing tilting bundles as in Proposition \ref{Prop:3.3SVDBtoricII} always exist by embedding $P$ into a bigger polytope which has known tilting bundles (using work of Gulotta \cite{Gul} and Ishii-Ueda \cite{IU08, IU15, IU16}) and restricting these bundles. The restriction is non-trivial as tilting is not a local property and relies on the fact that they consider the three-dimensional cases.

A second result of interest generating NCCRs from tilting objects is due to Iyama and Wemyss \cite{IW14b}. 
\begin{theorem}[Corollary 4.15 in \cite{IW14b}]
\label{Thm;Cor4.15IW}
    Let $f:Y\rightarrow \spec R$ be a projective birational morphism between $d$-dimensional Gorenstein varieties. Suppose that $Y$ is derived equivalent to some ring $\Lambda$, then the following are equivalent.
    \begin{enumerate}
        \item $f$ is a crepant resolution of $\spec R$.
        \item $\Lambda$ is an NCCR of $R$.
    \end{enumerate}
\end{theorem}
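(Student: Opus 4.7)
The plan is to decouple the two defining aspects of a (non-commutative) crepant resolution — regularity and crepancy — and to establish each as an equivalence with the matching algebraic property of $\Lambda$. Fix a tilting complex $T$ on $Y$ with $\End_Y(T) \cong \Lambda$, so that $\Rhom_Y(T,-) \colon \dbcoh{Y} \xrightarrow{\sim} \dbmod{\Lambda}$ by Theorem \ref{Thm:KK20Thm1.2}. Before addressing the equivalence itself, I would verify that $\Lambda$ is of the form $\End_R(M)$ for a finitely generated reflexive $R$-module $M$: set $M := Rf_*T$, then use the projection formula together with the fact that $f$ is birational to identify $\End_R(M)$ with $\End_Y(T) = \Lambda$ on the open locus where $f$ is an isomorphism. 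Normality of $R$ extends the identification to all of $\spec R$ after passing to reflexive hulls.

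For the smoothness versus finite global dimension correspondence, one implication is immediate: if $Y$ is smooth, Theorem \ref{Thm:KK20Thm1.2} already yields $\gldim \Lambda < \infty$. In the other direction, finite global dimension of $\Lambda$ transports across the derived equivalence to make every object of $\dbcoh{Y}$ perfect; for a Gorenstein variety $Y$, this forces regularity and hence smoothness.

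For the crepancy versus MCM correspondence, I would use Grothendieck duality along the projective morphism $f$ together with the relative Serre formalism. Crepancy means that the relative dualizing complex $\omega_{Y/R}$ is trivial (and the relative dimension vanishes since $f$ is birational), so
\[
\Rhom_R(\Rhom_Y(T,T), R) \simeq \Rhom_Y(T, T \otimes \omega_{Y/R}) \simeq \Rhom_Y(T, T),
\]
which reads as $\Ext^i_R(\Lambda, R) = 0$ for all $i > 0$, i.e.\ $\Lambda$ is maximal Cohen-Macaulay over $R$. Conversely, given $\Lambda$ MCM, the same identity holds at $(T,T)$, and applying the derived equivalence $\Rhom_Y(T,-)$ to strip $T$ from both sides of the resulting isomorphism $T \otimes \omega_{Y/R} \simeq T$ pins down $\omega_{Y/R} \simeq \O_Y$, which is crepancy of $f$.

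The main obstacle will be the careful deployment of Grothendieck duality: I must ensure the relative dualizing complex genuinely computes the relative Serre functor in this setting (where in the reverse direction $Y$ is not yet known to be smooth), and that the equality at the level of $\Ext$-modules upgrades to a sheaf-level identification $\omega_{Y/R} \simeq \O_Y$. Keeping the module-theoretic conditions (reflexivity, Cohen-Macaulayness) aligned with the categorical ones (derived equivalence, triviality of the relative Serre functor) throughout is where the bookkeeping weight of the proof will sit.
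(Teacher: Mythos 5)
The paper does not prove this theorem — it cites it directly from Iyama--Wemyss (Corollary 4.15 in \cite{IW14b}) and imports two of its ingredients, Lemma \ref{Lem:4.3IW14} and Proposition \ref{Prop:Prop4.4IW}, for use in the paper's own Theorem \ref{Prop:Parttiltingworks}. Your decoupling of regularity/global-dimension from crepancy/Cohen--Macaulayness is the right skeleton, and the smoothness $\leftrightarrow$ finite global dimension half and the crepancy $\Rightarrow$ MCM half (relative Serre functor / Grothendieck duality) are essentially correct and match the reference.

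The two places where your proposal has genuine gaps are the following.

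First, setting $M := Rf_*T$ and extending $\End_R(M)\cong\Lambda$ across codimension $\ge 2$ by reflexive hulls does not close. For a tilting \emph{complex}, $Rf_*T$ is a priori a bounded complex rather than a module, so it cannot be the reflexive $R$-module in the definition of an NCCR without further argument. More seriously, spreading an isomorphism from the isomorphism locus requires both sides to be reflexive $R$-modules, and reflexivity of $\Lambda$ is only established by the MCM step, which in your ordering comes \emph{later}. Iyama--Wemyss (and the paper, in the proof of Theorem \ref{Prop:Parttiltingworks}) instead go through localization plus Auslander--Goldman: Lemma \ref{Lem:4.3IW14} shows $g$ is an isomorphism for height-one primes and $\Lambda_{\mathfrak p}\cong\End(i^*T)$, the fact that perfect complexes over a local ring with no positive self-exts are shifted projectives gives Morita equivalence $\Lambda_{\mathfrak p}\sim R_{\mathfrak p}$, and Proposition \ref{Prop:Prop4.4IW} then produces $M\in\operatorname{ref}R$ with $\Lambda\cong\End_R(M)$. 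You should adopt this route rather than $M = Rf_*T$, and you must place the MCM/reflexivity step \emph{before} it.

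Second, the ``strip $T$'' step in MCM $\Rightarrow$ crepancy needs the duality isomorphism $\Rhom_R(\Lambda,R)\simeq\Rhom_Y(T,T\otimes\omega_{Y/R})$ to be an isomorphism in $\operatorname{D^b}(\Lambda)$ (not merely of abelian groups) together with an identification $\Hom_R(\Lambda,R)\cong\Lambda$ as $\Lambda$-bimodules. The latter is \emph{not} a consequence of $\Lambda$ being MCM alone — you need $\Lambda\cong\End_R(M)$ for reflexive $M$ over the normal domain $R$, which makes $\Lambda$ a symmetric $R$-order. Fortunately in the direction $(2)\Rightarrow(1)$ this is part of the NCCR hypothesis, so the argument does close; but as written, you invoke ``strip $T$'' without this justification, and the presentation hides that you are leaning on $\Lambda=\End_R(M)$ at that moment.
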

In particular, for $Y$ a Gorenstein variety in the above setting and $\mathcal{T}$ a tilting complex on $\dbcoh{Y}$, $\End(\mathcal{T})$ is an NCCR of $R$. 
Two ingredients of the proof that we will be using ourselves are the following results.

\begin{lemma}[Lemma 4.3 in \cite{IW14b}]\label{Lem:4.3IW14}
Let $Y\rightarrow \spec R$ be a projective birational morphism between $d$-dimensional normal integral schemes. Let $\mathfrak{p}\in\spec R$ and consider the following pullback diagram:
\[\begin{tikzcd}
    Y'\arrow[r, "i"]\arrow[d, "g"]& Y\arrow[d, "f"]\\
    \spec R_\mathfrak{p}\arrow[r, "j"] & \spec R
\end{tikzcd}\]
\begin{enumerate}
    \item If $\mathfrak{p}$ is a height one prime, then $g$ is an isomorphism.
    \item If $\mathcal{V}$ is a partial tilting complex of $Y$ with $\Lambda:=\End_{\dbcoh{Y}}(\mathcal{V})$, then $\Lambda$ is a module-finite $R$-algebra and $i^\ast\mathcal{V}$ is a partial tilting complex of $Y'$ with $\Lambda_\mathfrak{p}\cong\End_{Y'}(i^\ast\mathcal{V})$
    \end{enumerate}
\end{lemma}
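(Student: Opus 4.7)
The plan is to treat the two statements in turn, with part (1) supplying the geometric input for the localisation that drives part (2).

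For part (1), I would invoke the standard consequence of Zariski's main theorem that a projective birational morphism $f:Y\to\spec R$ between normal integral schemes is an isomorphism outside a closed subset of $\spec R$ of codimension at least two. Concretely, normality of the base together with birationality gives $f_\ast\mathcal{O}_Y=\mathcal{O}_{\spec R}$, and an exceptional divisor would contradict this at its generic point; hence the exceptional locus has pure codimension $\geq 2$ in $\spec R$. A height-one prime $\mathfrak{p}$ corresponds to a codimension-one point, so it lies in the open locus over which $f$ is already an isomorphism, and the base change $g:Y'\to\spec R_{\mathfrak{p}}$ is therefore an isomorphism.

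For the module-finiteness claim in part (2), I would use perfection of $\mathcal{V}$ to write $\Rshom_Y(\mathcal{V},\mathcal{V})\cong \mathcal{V}^\vee\otimes^{L}\mathcal{V}$ as a bounded complex with coherent cohomology. Since $f$ is projective, hence proper, $Rf_\ast$ preserves coherence on cohomology, and so $\Lambda=\End_Y(\mathcal{V})=H^0 Rf_\ast \Rshom_Y(\mathcal{V},\mathcal{V})$ is a finitely generated $R$-module, i.e.\ $\Lambda$ is a module-finite $R$-algebra.

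For the remaining assertions, the key observation is that $j$ is flat (localisation), so the base change $i$ is flat as well. Flat pullback preserves perfection, hence $i^\ast\mathcal{V}$ is perfect on $Y'$. For $\mathcal{V}$ perfect and $i$ flat the natural map is an isomorphism,
\[
Li^\ast\Rshom_Y(\mathcal{V},\mathcal{V})\;\cong\;\Rshom_{Y'}(i^\ast\mathcal{V},i^\ast\mathcal{V}),
\]
and flat base change in the cartesian square yields $Rg_\ast Li^\ast\cong Lj^\ast Rf_\ast$. Combining these and passing to cohomology gives
\[
\Ext^k_{Y'}(i^\ast\mathcal{V},i^\ast\mathcal{V})\;\cong\;\Ext^k_Y(\mathcal{V},\mathcal{V})\otimes_R R_{\mathfrak{p}}.
\]
The partial tilting hypothesis forces the right-hand side to vanish for $k>0$, so $i^\ast\mathcal{V}$ is partial tilting on $Y'$; the $k=0$ instance reads $\End_{Y'}(i^\ast\mathcal{V})\cong\Lambda\otimes_R R_{\mathfrak{p}}=\Lambda_{\mathfrak{p}}$, as desired.

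The main obstacle I expect is invoking the inputs at exactly the right level of generality. In part (1) one must be careful that the ``codimension-two exceptional locus'' statement uses normality of the \emph{base}, not of $Y$, and requires birationality together with projectivity. In part (2), the identification $Li^\ast\Rshom\cong\Rshom(Li^\ast,Li^\ast)$ relies essentially on perfection of $\mathcal{V}$, and one needs flat base change to give a strict isomorphism (not merely a spectral sequence) so that the conclusion $\Lambda_{\mathfrak{p}}\cong\End_{Y'}(i^\ast\mathcal{V})$ is a genuine ring isomorphism rather than an associated graded statement.
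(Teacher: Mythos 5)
The paper itself cites this lemma from Iyama--Wemyss \cite{IW14b} without reproducing a proof, so there is no internal proof to compare against; what follows is an assessment of your argument on its own terms. Your proof is correct and is essentially the argument given in \cite{IW14b}: part (1) via the codimension-$\geq 2$ image of the exceptional locus over a normal base (Zariski's main theorem), and part (2) via flatness of localisation, flat base change $Rg_\ast Li^\ast \cong Lj^\ast Rf_\ast$, and compatibility of $\Rshom$ with flat pullback for perfect complexes, with module-finiteness of $\Lambda$ coming from properness of $f$.

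Two small points. First, in part (1) the sentence ``an exceptional divisor would contradict $f_\ast\mathcal{O}_Y=\mathcal{O}_{\spec R}$ at its generic point'' is misleading as written: exceptional divisors on $Y$ coexist with $f_\ast\mathcal{O}_Y=\mathcal{O}_{\spec R}$ all the time (that is the typical situation for a resolution). What is true, and what you need, is that the \emph{image} of the exceptional locus in $\spec R$ has codimension $\geq 2$; the cleanest route is to localise at a height-one $\mathfrak{p}$ so that $R_\mathfrak{p}$ is a DVR, note that the birational inverse $\spec R_\mathfrak{p}\dashrightarrow Y'$ extends to a section $s$ of $g$ by the valuative criterion of properness, and then observe that a section of a separated morphism is a closed immersion whose image contains the generic point of the integral scheme $Y'$, forcing $s$ (hence $g$) to be an isomorphism. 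Second, in part (2) the partial tilting condition in the paper's Definition \ref{Def:tilt} is $\Ext^i=0$ for all $i\neq 0$, not just $i>0$; your localisation isomorphism $\Ext^k_{Y'}(i^\ast\mathcal{V},i^\ast\mathcal{V})\cong \Ext^k_Y(\mathcal{V},\mathcal{V})\otimes_R R_\mathfrak{p}$ handles negative $k$ by the identical computation, so this is a one-line addition, but it should be stated.
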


\begin{proposition}[Auslander-Goldman, found as Proposition 4.4 in \cite{IW14b}]
    \label{Prop:Prop4.4IW}
    Let $R$ be a normal domain and let $\Lambda$ be a module-finite $R$-algebra. Then the following conditions are equivalence.
    \begin{enumerate}
        \item There exists $M\in\operatorname{ref}R$ such that $\Lambda\cong\End_R(M)$ as $R$-algebras.
        \item $\Lambda\in\operatorname{ref}R$ and further $\Lambda_\mathfrak{p}$ is Morita equivalent to $R_\mathfrak{p}$ for all $\mathfrak{p}\in\spec R$ with $\operatorname{ht}\mathfrak{p}=1$.
    \end{enumerate}
\end{proposition}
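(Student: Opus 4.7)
The implication $(1)\Rightarrow(2)$ is the easy direction. Given $\Lambda\cong\End_R(M)$ with $M$ reflexive, I would first recall that since $R$ is a normal domain, for every height one prime $\mathfrak{p}$ the localisation $R_\mathfrak{p}$ is a DVR. A finitely generated reflexive module over a DVR is free, so $M_\mathfrak{p}\cong R_\mathfrak{p}^{\oplus r}$ for some $r$, whence $\Lambda_\mathfrak{p}\cong \End_{R_\mathfrak{p}}(M_\mathfrak{p})\cong M_r(R_\mathfrak{p})$, which is Morita equivalent to $R_\mathfrak{p}$. Reflexivity of $\Lambda=\End_R(M)$ follows from the standard fact that $\Hom_R(M,N)$ is reflexive whenever $N$ is, which one checks by noting that $\Hom_R(M,-)$ commutes with the intersection $N=\bigcap_{\operatorname{ht}\mathfrak{p}=1}N_\mathfrak{p}$ that characterises reflexivity over a normal domain.

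For the harder direction $(2)\Rightarrow(1)$, I would begin by localising at the generic point. Since $R$ has no primes of height one below $(0)$ in a vacuous sense and since Morita-equivalence at every height one prime forces the central simple algebra $\Lambda\otimes_R K$ to be split, one obtains $\Lambda\otimes_R K\cong \End_K(V)$ for a finite dimensional $K$-vector space $V$. Fixing such an identification, the plan is to produce a global reflexive $R$-lattice $M\subset V$ whose endomorphism ring recovers $\Lambda$. Concretely, for each height one prime $\mathfrak{p}$ the hypothesis supplies a Morita equivalence $\Lambda_\mathfrak{p}\sim R_\mathfrak{p}$, which over a DVR means $\Lambda_\mathfrak{p}\cong \End_{R_\mathfrak{p}}(M_\mathfrak{p})$ for a (necessarily free) progenerator $M_\mathfrak{p}\subset V$. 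I would normalise these lattices so that they agree with a fixed $\Lambda$-stable lattice on all but finitely many $\mathfrak{p}$, so that the intersection
\[
M:=\bigcap_{\operatorname{ht}\mathfrak{p}=1} M_\mathfrak{p}\ \subset\ V
\]
is a finitely generated $R$-module, and reflexive by construction (intersection of reflexive lattices inside a $K$-vector space over a normal domain).

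The final step is to exhibit the ring isomorphism $\Lambda\cong \End_R(M)$. Because $\Lambda$ acts on each $M_\mathfrak{p}$ through the given Morita equivalence, it acts on $M$ by restriction, giving a canonical $R$-algebra map $\varphi:\Lambda\to\End_R(M)$. Both source and target are reflexive as $R$-modules (by hypothesis and by the first paragraph), so to show $\varphi$ is an isomorphism it suffices to show this at every height one prime $\mathfrak{p}$; but there $\varphi_\mathfrak{p}$ is exactly the defining isomorphism $\Lambda_\mathfrak{p}\cong\End_{R_\mathfrak{p}}(M_\mathfrak{p})$, and a morphism of reflexive modules over a normal domain that is an isomorphism in codimension one is an isomorphism.

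The main obstacle, as is typical for Auslander--Goldman style statements, is the gluing in the middle paragraph: the local lattices $M_\mathfrak{p}$ are only determined up to $K^\times$-scaling, and one must make compatible choices so that $M=\bigcap_\mathfrak{p} M_\mathfrak{p}$ is finitely generated (not merely reflexive as an abstract $R$-submodule of $V$). I would address this by starting from any finitely generated $\Lambda$-stable $R$-lattice $N\subset V$ (obtained by picking $K$-generators of $V$ that lie in $V$ and multiplying through by a common denominator), observing that $N_\mathfrak{p}=M_\mathfrak{p}$ for all but finitely many $\mathfrak{p}$, and only adjusting $M_\mathfrak{p}$ at the finitely many primes where $N_\mathfrak{p}$ fails to realise the Morita equivalence; the finitely generated hull $N^{\ast\ast}$ then serves as $M$.
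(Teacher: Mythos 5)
The paper itself offers no proof of this Proposition; it is quoted verbatim from Iyama--Wemyss \cite{IW14b}, where it is attributed to Auslander--Goldman. So there is no ``paper's own proof'' to compare against, and your proposal must be judged on its own.

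Your overall strategy for $(2)\Rightarrow(1)$ is the right one: pass to the fraction field to get $\Lambda\otimes_R K\cong\End_K(V)$, pick a finitely generated $\Lambda$-stable full $R$-lattice $N\subset V$, and take $M=N^{\ast\ast}$; since $\Lambda$ and $\End_R(M)$ are both reflexive, the map $\Lambda\to\End_R(M)$ is an isomorphism provided it is one in codimension one. The $(1)\Rightarrow(2)$ direction is fine as written.

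The genuine gap is at the codimension-one comparison, and you have misdiagnosed where the difficulty lies. You worry about the $K^\times$-scaling ambiguity of the lattices $M_\mathfrak{p}$ supplied by the hypothesis and about having to ``adjust'' them at finitely many primes. But the actual issue is more basic: a Morita equivalence $\Lambda_\mathfrak{p}\sim R_\mathfrak{p}$ gives an \emph{abstract} isomorphism $\Lambda_\mathfrak{p}\cong\End_{R_\mathfrak{p}}(P)$ for some free $R_\mathfrak{p}$-module $P$, not a realisation of $\Lambda_\mathfrak{p}$ as $\End_{R_\mathfrak{p}}(L)$ for a lattice $L\subset V$ compatibly with the fixed inclusion $\Lambda_\mathfrak{p}\hookrightarrow\End_K(V)$. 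Nothing in your middle paragraph addresses this, and the phrase ``$\Lambda$ acts on each $M_\mathfrak{p}$ through the given Morita equivalence'' presupposes the very compatibility in question. What actually closes the gap --- and simultaneously removes any need for ``adjustment'' --- is the theory of maximal orders over a DVR: $\Lambda_\mathfrak{p}\cong M_r(R_\mathfrak{p})$ is a maximal $R_\mathfrak{p}$-order in $\End_K(V)$, and so is $\End_{R_\mathfrak{p}}(N_\mathfrak{p})$ for any full lattice $N_\mathfrak{p}\subset V$; since $N$ is $\Lambda$-stable one has the containment of orders $\Lambda_\mathfrak{p}\subseteq\End_{R_\mathfrak{p}}(N_\mathfrak{p})$, and a maximal order contained in another is equal to it. Thus $\Lambda_\mathfrak{p}=\End_{R_\mathfrak{p}}(N_\mathfrak{p})$ for \emph{every} height one $\mathfrak{p}$, no exceptional primes, no gluing; your final paragraph's choice $M=N^{\ast\ast}$ then finishes the proof exactly as you say. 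Please supply this maximal-order step explicitly --- without it the proof does not go through, and the hand-waving around ``finitely many bad primes'' points in the wrong direction.
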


\subsection{NCCRs via partial tilting complexes}

We are now ready to state and prove the generalisation of Proposition \ref{Prop:3.3SVDBtoricII} and Theorem \ref{Thm;Cor4.15IW} that lies at the core of this paper. Recall that we can, and henceforth will, write a Gorenstein cone (after suitable change of basis) as $\sigma=\cone(P\times\{1\})\subseteq N_{\R}\times \R$, where $P$ is a lattice polytope.
\begin{theorem}
    \label{Prop:Parttiltingworks}
    Choose a regular triangulation of $P$ and let $\Sigma$ be the corresponding fan refining $\sigma$. Let $\mathcal{T}$ be a partial tilting complex on $\mathcal{X}_{\Sigma}$, the associated toric DM stack. Assume that $\Lambda=\End_{\mathcal{X}_{\Sigma}}(\mathcal{T})$ has finite global dimension. Then it is an NCCR for $R=k[\sigma^\vee\cap M]$.
\end{theorem}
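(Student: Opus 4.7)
The plan is to verify the Auslander--Goldman criterion of Proposition \ref{Prop:Prop4.4IW} for $\Lambda$, which will yield $\Lambda\cong \End_R(M)$ for some $M\in \operatorname{ref}R$, and to verify separately that $\Lambda$ is Cohen--Macaulay as an $R$-module; finite global dimension of $\Lambda$ is a standing hypothesis. Factor the structural map as $f\colon \mathcal{X}_\Sigma \to X_\Sigma \to \spec R$, where the first arrow is the coarse moduli map (Theorem \ref{Thm:4.12FK18}) and the second is the projective birational toric contraction (which is crepant since the regular triangulation consists of lattice vertices at height one in $P\times\{1\}$). The two key derived--geometric inputs are Lemma \ref{Lem:RelSerFun} (the relative Serre functor of $\mathcal{X}_\Sigma$ over $\spec R$ is the identity) and Lemma \ref{Lem:4.3IW14}, whose conclusions pass to the stacky setting through the coarse moduli space.

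\textbf{Paragraph 2 (Module-finiteness, Cohen--Macaulay, reflexive).} The proof of Lemma \ref{Lem:4.3IW14}(2) uses only properness of $f$ and perfectness of $\mathcal{T}$, so it extends verbatim to our DM-stack setup and shows that $\Lambda$ is a module-finite $R$-algebra. Setting $\mathcal{F}=\mathcal{G}=\mathcal{T}$ in Lemma \ref{Lem:RelSerFun} yields $\Lambda\cong \Rhom_R(\Lambda,R)$ in $D(R)$; since $\mathcal{T}$ is partial tilting, the left-hand side is concentrated in degree zero, so $\Ext^i_R(\Lambda,R)=0$ for $i>0$ and $\Hom_R(\Lambda,R)\cong \Lambda$. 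The vanishing of higher Ext groups says exactly that $\Lambda\in \operatorname{CM}R$, as recalled in the introduction. Because $R$ is Gorenstein, every maximal Cohen--Macaulay module is automatically reflexive (the canonical map $\Lambda\to \Lambda^{**}$ is an isomorphism under MCM duality), so $\Lambda\in \operatorname{ref}R$.

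\textbf{Paragraph 3 (Morita equivalence at height one).} Let $\mathfrak{p}$ be a height-one prime of $R$. By the orbit--cone correspondence $\mathfrak{p}$ is the vanishing ideal of a ray $\rho\in \sigma(1)$; this ray is also a ray of $\Sigma$ (the triangulation retains the vertices of $P$), and since any simplicial ray cone is smooth, both $\mathcal{X}_\Sigma\to X_\Sigma$ and $X_\Sigma\to \spec R$ become isomorphisms after base change along $\spec R_\mathfrak{p}\to \spec R$. By Lemma \ref{Lem:4.3IW14}(2), $\mathcal{T}|_{\spec R_\mathfrak{p}}$ is a perfect complex on the DVR $R_\mathfrak{p}$ with $\Ext^i=0$ for $i\neq 0$ and endomorphism ring $\Lambda_\mathfrak{p}$. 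Represent this pullback by a minimal complex $F^\bullet$ of finite free $R_\mathfrak{p}$-modules (so every differential lies in $\mathfrak{p}R_\mathfrak{p}$); a direct book-keeping with the universal coefficient sequence, together with the fact that $R_\mathfrak{p}$ has global dimension one, forces $\operatorname{rank}(F^j)\operatorname{rank}(F^{j+i})=0$ for every $i\neq 0$. Hence $F^\bullet\simeq R_\mathfrak{p}^r[-j_0]$ is concentrated in a single degree, so $\Lambda_\mathfrak{p}\cong M_r(R_\mathfrak{p})$, which is Morita equivalent to $R_\mathfrak{p}$.

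\textbf{Paragraph 4 (Conclusion and main obstacle).} Paragraphs 2 and 3 verify the hypotheses of Proposition \ref{Prop:Prop4.4IW}, yielding $\Lambda\cong \End_R(M)$ for some $M\in \operatorname{ref}R$; combined with $\Lambda\in \operatorname{CM}R$ (Paragraph 2) and the given $\gldim \Lambda<\infty$, this exhibits $\Lambda$ as an NCCR of $R$. The main obstacle I expect is Paragraph 3: upgrading the partial tilting hypothesis to codimension-one Morita equivalence, without the generation property enjoyed by a full tilting bundle in the original Proposition \ref{Prop:3.3SVDBtoricII}. A secondary technical point is that Lemma \ref{Lem:4.3IW14} is stated in \cite{IW14b} only for schemes; we handle this by factoring $f$ through the coarse moduli $X_\Sigma$ and using that $\mathcal{X}_\Sigma\to X_\Sigma$ is an isomorphism in codimension one for any simplicial fan.
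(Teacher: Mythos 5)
Your proof takes a genuinely different route from the paper's and, despite a sound high-level strategy, contains a concrete error and leaves two steps unjustified.

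\textbf{Where you diverge from the paper.} You attempt to apply Lemma~\ref{Lem:4.3IW14} directly to the structural map $\mathcal{X}_\Sigma\to\spec R$, asserting that its proof extends ``verbatim'' to the stacky setting by factoring through the coarse moduli space. The paper instead sidesteps the stacky issue entirely: it takes a smooth refinement $\Sigma_\Delta$ of $\sigma$ (a projective toric resolution, obtained by star subdivisions at heights $\geq 1$) and uses the toric VGIT machinery (Theorem~\ref{Thm:5.8inFK18} together with Lemma~\ref{Lem:StackIso} and Theorem~\ref{Orlovs thm}) to produce a fully faithful functor $F\colon \dbcoh{\mathcal{X}_\Sigma}\hookrightarrow\dbcoh{X_{\Sigma_\Delta}}$. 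Since $F$ preserves $\Rhom$, $F\mathcal{T}$ is partial tilting with the \emph{same} endomorphism ring $\Lambda$, and because $\mathcal{X}_{\Sigma_\Delta}\cong X_{\Sigma_\Delta}$ is an honest smooth scheme (Theorem~\ref{Thm:4.12FK18}), Lemma~\ref{Lem:4.3IW14} applies as stated in \cite{IW14b}, with no stacky extension needed. Your approach is plausible but requires you to actually prove the stacky version of Lemma~\ref{Lem:4.3IW14}; the paper's fully-faithful-embedding trick is precisely what makes this unnecessary, and it is the key idea you are missing.

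\textbf{A genuine error in Paragraph 3.} The claim that ``by the orbit--cone correspondence $\mathfrak{p}$ is the vanishing ideal of a ray $\rho\in\sigma(1)$'' is false: the orbit--cone correspondence classifies only \emph{torus-invariant} closed subvarieties, whereas Proposition~\ref{Prop:Prop4.4IW} requires Morita equivalence at \emph{every} height-one prime of $R$, almost none of which are torus-invariant (already in $k[x,y]$ the prime $(x+y)$ is a non-torus-invariant height-one prime). The correct reason that $Y\times_R\spec R_\mathfrak{p}\to\spec R_\mathfrak{p}$ is an isomorphism for all height-one $\mathfrak{p}$ is Lemma~\ref{Lem:4.3IW14}(1), a general property of projective birational morphisms of normal integral schemes, not anything specific to rays or simplicial cones. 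Your ``simplicial ray cones are smooth'' observation is relevant only to showing that $\mathcal{X}_\Sigma\to X_\Sigma$ is an isomorphism in codimension one (and even there one must separately handle the non-torus-invariant codimension-one points, where the claim holds for the different reason that the coarse map is an isomorphism over the open torus).

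\textbf{A missing case.} In Paragraph 3 you conclude $\Lambda_\mathfrak{p}\cong M_r(R_\mathfrak{p})$ but do not rule out $r=0$, which would give $\Lambda_\mathfrak{p}=0$, not Morita equivalent to $R_\mathfrak{p}$. The paper covers this by observing that $\Lambda\in\operatorname{ref}R$ forces $\Lambda$ to be supported everywhere on $\spec R$, hence $i^\ast\mathcal{V}\neq 0$. You should add this. Paragraph 2 (Serre duality giving $\Lambda\in\operatorname{CM}R$, hence reflexive) matches the paper's argument and is correct.
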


\begin{proof}
    We first note that $\Sigma$ is simplicial, and thus the DM stack $\mathcal{X}_\Sigma$ is smooth (by Theorem \ref{Thm:4.12FK18}). By assumption, $\Lambda$ has finite global dimension. Applying the relative Serre functor given by Lemma \ref{Lem:RelSerFun}, we obtain
    \[
   \Rhom_{\mathcal{X}_\Sigma}(\mathcal{T},\mathcal{T})\cong\Rhom_{R}(\Rhom_{\mathcal{X}_\Sigma}(\mathcal{T},\mathcal{T}),R).
    \]
    As $\mathcal{T}$ is partial tilting, $\Rhom_{\mathcal{X}_\Sigma}(\mathcal{T},\mathcal{T})=\End_{\mathcal{X}_\Sigma}(\mathcal{T})$, so we get\[
    \End_{\mathcal{X}_\Sigma}(\mathcal{T})=\Rhom_R(\End_{\mathcal{X}_\Sigma}(\mathcal{T}),R).
    \]
    To both sides, we apply $H^i$, giving
    \[
    0=\Ext^i_R(\End_{\mathcal{X}_\Sigma}(\mathcal{T}),R)\quad \quad \forall i>0.
    \]
    Thus $\Lambda=\End_{\mathcal{X}_\Sigma}(\mathcal{T})\in \operatorname{CM}R$, as required. 
    
    Note that $\Lambda\in \operatorname{CM}R\Rightarrow \Lambda\in \operatorname{ref}R$. To prove that $\Lambda$ is an NCCR, it remains to show that $\Lambda=\End_R(M)$ for $M\in \operatorname{ref}R$. We will essentially be imitating the proof of Theorem 4.5 in \cite{IW14b} to obtain this statement.

    By Theorem 11.1.9 in \cite{CLS}, there is a refinement of $\sigma$ corresponding to a fan $\Sigma_\Delta$ such that $\Sigma_\Delta$ is smooth and the toric morphism $f:X_{\Sigma_\Delta}\rightarrow X_\sigma$ is a projective resolution of singularities. The refinement $\Sigma_\Delta$ is obtained by a sequence of star subdivisions on points in $N\cap\{\sum_{\rho\in\sigma(1)}\lambda_i u_\rho\mid 0\le \lambda_i<1\}$. 
    
    Denote by $\nu$ the set of primitive generators of $\Sigma_\Delta(1)$. Then $\langle\mathfrak{m}_\sigma,v\rangle\geq 1$ for all $v\in\nu$, where $\mathfrak{m}_\sigma\in M$ is the Gorenstein element of $\sigma$ (i.e. $\langle \mathfrak{m}_\sigma, u_\rho\rangle=1\quad \forall\rho\in\sigma(1)$). We are now in a position to apply Theorem \ref{Thm:5.8inFK18} (and implicitly Lemma \ref{Lem:StackIso} and Theorem \ref{Orlovs thm}) to obtain a fully faithful functor
    \[
    F:\dbcoh{\mathcal{X}_\Sigma}\rightarrow \dbcoh{\mathcal{X}_{\Sigma_\Delta}}.
    \]
    Since the functor is fully faithful, $\mathcal{V}=F\mathcal{T}$ is partial tilting on $\mathcal{X}_{\Sigma_\Delta}$ and $\End_{\mathcal{X}_{\Sigma_\Delta}}(\mathcal{V})=\End_{\mathcal{X}_{\Sigma}}(\mathcal{T})=\Lambda$. By Theorem \ref{Thm:4.12FK18}, $\mathcal{X}_{\Sigma_\Delta}\cong X_{\Sigma_\Delta}$ and so we can apply Lemma \ref{Lem:4.3IW14}. So, given $\mathfrak{p}\in\spec R$, consider the pullback diagram:
    \[
    \begin{tikzcd}
    X_{\Sigma_\Delta}'\arrow[r, "i"]\arrow[d, "g"]& X_{\Sigma_\Delta}\arrow[d, "f"]\\
    \spec R_{\mathfrak{p}}\arrow[r, "j"] & \spec R.
    \end{tikzcd}
    \]
    Hence $\Lambda$ is a module-finite $R$-algebra and $i^\ast\mathcal{V}$ is partial tilting on $X_{\Sigma_\Delta}'$ with $\Lambda_\mathfrak{p}\cong \End_{X_{\Sigma_\Delta}'}(i^\ast\mathcal{V})$ and if $\mathfrak{p}$ is a height one prime, then $g$ is an isomorphism. As $R$ is noetherian and $\Lambda \in \operatorname{ref}R$, $\Lambda$ is necessarily supported everywhere, and so $i^\ast\mathcal{V}\neq 0$. $R_\mathfrak{p}$ is a local ring, and so the only perfect complexes $x$ with $\Hom_{\dbmod{R_\mathfrak{p}}}(x,x[i])=0$ for all $i>0$ are shifts of projective modules (Lemma 2.12, \cite{RZ03}).

    Thus $g_\ast i^\ast\mathcal{V}\cong R_\mathfrak{p}^a[b]$ for some $a\in\mathbb{N}$ and $b\in\Z$. Therefore,  $\Lambda_\mathfrak{p}\cong\End_{R_\mathfrak{p}}(R_\mathfrak{p}^a)$, which is Morita equivalent to $R_\mathfrak{p}$. As this is true for any height 1 prime $\mathfrak{p}\in\spec R$ and further $\Lambda$ is reflexive, we can apply Proposition \ref{Prop:Prop4.4IW} to obtain that $\Lambda\cong \End_R(M)$ as $R$-algebras for some $M\in\operatorname{ref}R$. Hence $\Lambda$ is indeed an NCCR, as claimed.
\end{proof}

\begin{remark}
    We note that this result, and its proof, are in line with expectations elaborated in \cite{VdB23}: $X_{\Sigma_\Delta}$ is a not necessarily crepant resolution of $\spec R$, and so we expect that the derived category of an NCCR embeds inside $\dbcoh{X_{\Sigma_\Delta}}$, as NCCRs should be minimal in a categorical sense. This sentiment is a strengthening of Van den Bergh's conjecture that all crepant resolutions, commutative or not, should be derived equivalent. We refer the reader to work of Kuznetsov \cite{Kuz08} for more results in this direction.
\end{remark}

If $\mathcal{T}$ is a tilting complex, and not merely partial tilting, we have $\dbmod{\Lambda}\cong \dbcoh{\mathcal{X}_\Sigma}$, which is a smooth DM stack by Theorem \ref{Thm:4.12FK18}, and hence $\Lambda$ has finite global dimension. Thus, we obtain the following corollary to Theorem \ref{Prop:Parttiltingworks}.
\begin{corollary}
    \label{Cor:TiltCplxworks}
    Choose a regular triangulation of $P$ and let $\Sigma$ be the corresponding fan refining $\sigma$. Let $\mathcal{T}$ be a tilting complex on $\mathcal{X}_{\Sigma}$, the associated toric DM stack. Then $\Lambda=\End_{\mathcal{X}_{\Sigma}}(\mathcal{T})$ is an NCCR for $R=k[\sigma^\vee\cap M]$.
\end{corollary}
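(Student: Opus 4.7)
The plan is to deduce this corollary as a direct consequence of Theorem \ref{Prop:Parttiltingworks}, the only new input being that tilting (as opposed to partial tilting) automatically forces finite global dimension in this setting. So the strategy is to verify the hypotheses of Theorem \ref{Prop:Parttiltingworks} and invoke it.

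First, I would observe that any tilting complex is in particular a partial tilting complex, since the vanishing condition $\Ext^i(\mathcal{T},\mathcal{T})=0$ for $i\neq 0$ is part of both definitions. So the only remaining hypothesis to check is that $\Lambda=\End_{\mathcal{X}_\Sigma}(\mathcal{T})$ has finite global dimension.

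To establish this, I would first note that since the triangulation of $P$ is regular, the associated fan $\Sigma$ is simplicial, and Theorem \ref{Thm:4.12FK18} then guarantees that $\mathcal{X}_\Sigma$ is a smooth (in particular regular) Deligne--Mumford stack. Next, I would apply the tilting equivalence of Theorem \ref{Thm:KK20Thm1.2} (and the associated standard machinery for DM stacks, which the paper invokes earlier in the NCCR section). This yields an equivalence $\Rhom_{\mathcal{X}_\Sigma}(\mathcal{T},-):\operatorname{D}_{Qch}(\mathcal{X}_\Sigma)\xrightarrow{\sim}\operatorname{D}(\Lambda^\circ)$. Because $\mathcal{X}_\Sigma$ is regular and the equivalence identifies bounded coherent objects on the stack with finitely generated $\Lambda$-modules under appropriate noetherian hypotheses, the finite global dimension of $\mathcal{X}_\Sigma$ transfers to $\Lambda$, giving $\gldim\Lambda<\infty$.

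With both hypotheses of Theorem \ref{Prop:Parttiltingworks} in hand (partial tilting, and finite global dimension of the endomorphism algebra), I would invoke that theorem directly to conclude that $\Lambda$ is an NCCR of $R=k[\sigma^\vee\cap M]$. The only potential subtlety, and the main point to be careful about, is making the transfer of finite global dimension from $\mathcal{X}_\Sigma$ to $\Lambda$ rigorous in the DM-stack setting rather than just for noetherian schemes; this is essentially handled by the extension of Theorem \ref{Thm:KK20Thm1.2} discussed after Definition \ref{Def:tilt}, so no new work is really needed. Hence the corollary follows immediately.
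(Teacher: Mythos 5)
Your proposal is correct and matches the paper's argument: the paper likewise observes that tilting implies partial tilting, invokes the tilting equivalence $\dbmod{\Lambda}\cong\dbcoh{\mathcal{X}_\Sigma}$ together with the smoothness of $\mathcal{X}_\Sigma$ (Theorem \ref{Thm:4.12FK18}) to get $\gldim\Lambda<\infty$, and then applies Theorem \ref{Prop:Parttiltingworks}. No gaps.
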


\begin{remark}
    This Corollary in a certain sense combines the results by \v{S}penko-Van den Bergh and Iyama-Wemyss. Instead of needing a crepant map from a Gorenstein \textbf{variety} like in the case of Theorem \ref{Thm;Cor4.15IW}, we allow for a stack. Note that the map from $\mathcal{X}_\Sigma$ to $\spec R$ is still crepant. Meanwhile, Proposition \ref{Prop:3.3SVDBtoricII} allows for a crepant map from a stack $\mathcal{X}_\Sigma$ to $\spec R$, but requires the tilting object to be a bundle, whereas we allow for general tilting complexes. In this way, we can also hope to generate NCCRs in cases where no toric NCCR exists.
\end{remark}
\begin{remark}  
    Note that using the Theorem \ref{Prop:Parttiltingworks}, and not only the Corollary, it is now sufficient to know of partial tilting complexes with finite global dimension on another category $\mathcal{D}$ that maps fully faithfully into $\dbcoh{\mathcal{X}_\Sigma}$. So in particular if we have a homologically smooth category $D$ admitting a tilting object that maps fully faithfully into $\dbcoh{\mathcal{X}_\Sigma}$, we are guaranteed an NCCR as well.
\end{remark}

\section{Gorenstein cones via canonical bundles over projective varieties}
\label{sec:CanBdls}

\noindent We now examine how to use Theorem \ref{Prop:Parttiltingworks} as a tool to generate NCCRs. 
Let us restrict our attention to affine toric Gorenstein varieties whose associated cone $\sigma$ is the support of the canonical vector bundle on complete, simplicial toric varieties. Those cones are precisely those Gorenstein cones of the form $\cone(P\times\{1\})$ with $P$ a polytope with primitive vertices (i.e. $v=(v_1,\dots,v_n)$ such that $\gcd(v_i)=1$) and $0\in \operatorname{Int}(P)$. The way we generate NCCRs here will be to find tilting complexes on $\mathcal{X}_\mathcal{V}$, where $\mathcal{V}$ is the simplicial fan corresponding to the canonical bundle. In doing so, we triangulate $P$ on an interior point and no longer require the tilting object to be a bundle, exhibiting thus the advantage over using Proposition \ref{Prop:3.3SVDBtoricII}. Furthermore, so long as the fan is simplicial, we do not require it to be smooth, extending thus Theorem \ref{Thm;Cor4.15IW}.

The class of cones we consider here is quite restrictive, in particular requiring 0 to be an interior point. We can provide some leeway for this via the following Lemma.
\begin{lemma}
    \label{Lem:TranslationOfCone}
    Let $\sigma=\cone(P\times\{1\})$ and $\sigma'=\cone((P-m)\times\{1\})$ for $m\in M$. Then $X_\sigma\cong X_{\sigma'}$ and $R_\sigma\cong R_{\sigma'}$.
\end{lemma}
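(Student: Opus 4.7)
The plan is to exhibit a unimodular automorphism of the lattice $N$ that sends $\sigma$ to $\sigma'$, and then to invoke the standard principle that isomorphic lattice-cone pairs yield isomorphic affine toric varieties and coordinate rings. The first thing to pin down is the role of $m$: since $P \times \{1\}$ lies in the affine hyperplane $\{\langle \mathfrak{m}_\sigma, -\rangle = 1\}\subset N_\R$, the translation $P \mapsto P - m$ can only be meaningful if $m$ is interpreted as a lattice vector lying in the linear hyperplane $H = \{\langle \mathfrak{m}_\sigma, -\rangle = 0\}$, i.e.\ in the chosen coordinates where $\mathfrak{m}_\sigma = (0,\dots,0,1)$ one takes $m \in N$ with last coordinate zero.

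With this in place, I would define $\phi : N \to N$ by $\phi(v,t) = (v - tm, t)$ using the splitting $N = (H \cap N) \oplus \Z$. This map is $\Z$-linear, with inverse $(v,t) \mapsto (v + tm, t)$, hence a unimodular automorphism; moreover $\langle \mathfrak{m}_\sigma, \phi(w)\rangle = \langle \mathfrak{m}_\sigma, w\rangle$ for every $w \in N$, so $\phi$ preserves the height with respect to $\mathfrak{m}_\sigma$. A ray generator of $\sigma$ has the form $(p,1)$ with $p \in P$, and $\phi(p,1) = (p-m, 1)$ ranges over $(P - m) \times \{1\}$ as $p$ ranges over $P$. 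By $\R$-linear extension, $\phi_\R(\sigma) = \cone((P-m)\times\{1\}) = \sigma'$.

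Finally, the dual automorphism $\phi^\vee : M \to M$ is likewise unimodular and satisfies $\phi^\vee((\sigma')^\vee) = \sigma^\vee$, so it induces a bijection of semigroups $(\sigma')^\vee \cap M \xrightarrow{\sim} \sigma^\vee \cap M$. Extending $k$-linearly gives the ring isomorphism $R_{\sigma'} \cong R_\sigma$, and taking $\spec$ yields $X_\sigma \cong X_{\sigma'}$. There is essentially no obstacle beyond the bookkeeping of which sublattice $m$ lives in; the content of the lemma is the purely lattice-theoretic observation that translating the base polytope is realised by a unimodular shear, after which the standard toric dictionary delivers the claim.
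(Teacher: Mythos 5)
Your proof is correct, but it takes a genuinely different route from the paper's. You produce a unimodular shear $\phi(v,t) = (v - tm, t)$ of the lattice $N$ that carries $\sigma$ to $\sigma'$, and then invoke the standard functoriality of the toric dictionary under lattice isomorphisms to get $X_\sigma \cong X_{\sigma'}$ and $R_\sigma \cong R_{\sigma'}$. You also correctly flag and repair the statement's imprecision: as written, $m \in M$ would live in the dual lattice, but for $P - m$ to make sense one needs $m$ to be a lattice vector in the linear hyperplane $\langle \mathfrak{m}_\sigma, - \rangle = 0$ inside $N$. The paper instead runs the argument entirely through the Cox construction: both cones have empty exceptional set, so $U_\sigma = U_{\sigma'} = \mathbb{A}^{|\sigma(1)|}$, and the authors write out the defining equations of $S_\sigma$ and $S_{\sigma'}$ inside $(\mathbb{G}_m)^{|\sigma(1)|}$ and use the relation $\prod_\rho t_\rho = 1$ (coming from the last coordinate of the ray generators all being $1$) to show the two systems of equations coincide, hence $S_\sigma = S_{\sigma'}$ and the quotients agree. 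Your version is shorter and more conceptual, and does not depend on the Cox machinery at all; the paper's version is more computational but has the side benefit of displaying $S_\sigma$ explicitly as a subgroup of the torus, which is in the same spirit as the Cox-stack manipulations used elsewhere in \S\ref{sec:Toric}. Both are complete proofs.
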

\begin{proof}
    The proof is immediate when applying the Cox construction to both cones. Indeed, both cones have the same exceptional set - $\emptyset$. Let $\{u_\rho\}$ be the primitive generators of $\sigma(1)$. Then $\{u_\rho-(m,0)\}$ are the ray generators of $\sigma'$. Thus, denoting by $e_i$ a standard basis for $M$, the groups $S_\sigma, S_{\sigma'}$ are given (see Lemma 5.1.1 in \cite{CLS}) via $S_\sigma=\{(t_\rho)\in(\C^\ast)^{|\sigma(1)|}\vert \prod_\rho t_\rho^{\langle e_i,u_\rho\rangle}=1\}$ and $S_{\sigma'}=\{(t_\rho)\in(\C^\ast)^{|\sigma'(1)|}\vert \prod_\rho t_\rho^{\langle e_i,u_\rho-m\rangle}=1\}$. In other words, the groups are defined by the following sets of equalities (where we write $m=(m_1,\dots,m_n)$):
    \begin{eqnarray}
        && t_1^{u_{\rho_1,1}}\dots t_{|\sigma(1)|}^{u_{\rho_{|\sigma(1)|},1}}=1,\nonumber\\
        &&\vdots\nonumber\\
        && t_1^{u_{\rho_1,n}}\dots t_{|\sigma(1)|}^{u_{\rho_{|\sigma(1)|},n}}=1,\nonumber\\
        && t_1\cdot\dots\cdot t_{|\sigma(1)|}=1.
    \end{eqnarray}

    \begin{eqnarray}
        && t_1^{u_{\rho_1,1}-m_1}\dots t_{|\sigma(1)|}^{u_{\rho_{|\sigma(1)|},1}-m_1}=1,\nonumber\\
        &&\vdots\nonumber\\
        && t_1^{u_{\rho_1,n}-m_n}\dots t_{|\sigma(1)|}^{u_{\rho_{|\sigma(1)|},n}-m_n}=1,\nonumber\\
        && t_1\cdot\dots\cdot t_{|\sigma(1)|}=1.
    \end{eqnarray}

    Using the fact that in both cases, $t_1\cdot\dots\cdot t_{|\sigma(1)|}=1$, we see that both sets of equalities define the same group and so $S_\sigma\cong S_{\sigma'}$. Thus, the Cox construction gives the same affine variety $X_\sigma\cong X_{\sigma'}$ and hence also $R_\sigma\cong R_{\sigma'}$.
\end{proof}
\noindent Thus, if the $P$ contains an interior point, we may assume (by translating if necessary) that this interior points is $(0,\dots,0)$.

\subsection{Tilting objects on global quotient stacks}

Let us first consider under which conditions we can find appropriate (partial) tilting complexes and, consequently, NCCRs for cones arising as support of toric vector bundles. Observe the following immediate consequence of Corollary \ref{Cor:TiltCplxworks}.

\begin{corollary}
\label{Cor:TiltCplxCanbdl}
    Let $X$ be a complete simplicial toric variety such that $\mathcal{V}:=[\tot \omega_X]$ admits a tilting complex. Consider the cone $\sigma=|\Sigma_V|$, where $\Sigma_V$ is the fan associated to the toric vector bundle $\tot \omega_X$. Then $R=k[\sigma^\vee\cap M]$ has an NCCR.
\end{corollary}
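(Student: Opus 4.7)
The plan is to apply Corollary \ref{Cor:TiltCplxworks} directly to the fan $\Sigma_\mathcal{V}$ of $\tot\omega_X$. First I would identify the Gorenstein structure on $\sigma=|\Sigma_\mathcal{V}|$. Writing $\Sigma_X\subset N_\R$ for the fan of $X$, the rays of $\Sigma_\mathcal{V}\subset N_\R\oplus\R$ can be taken in the form $(u_\rho,1)$ for $\rho\in\Sigma_X(1)$, with maximal cones lifting the maximal cones of $\Sigma_X$. Since $X$ is complete and simplicial, the polytope $P:=\conv\{u_\rho\mid\rho\in\Sigma_X(1)\}$ contains $0$ in its interior and every primitive ray generator of $\Sigma_\mathcal{V}$ sits in $P\times\{1\}$, so $\sigma=\cone(P\times\{1\})$ with Gorenstein element $\mathfrak{m}_\sigma=(0,\dots,0,1)$. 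This puts us exactly in the setting of Corollary \ref{Cor:TiltCplxworks}, with the polytope $P$ of that corollary being the Fano-type polytope of $X$.

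The next step is to verify that $\Sigma_\mathcal{V}$ realises a \emph{regular} triangulation of $P$, as required. The cones of $\Sigma_\mathcal{V}$ intersect the hyperplane at height one in the simplices of the triangulation induced by $\Sigma_X$. Regularity of this triangulation is equivalent to the existence of a strictly convex $\Sigma_X$-piecewise linear function on $N_\R$; any ample class on $X$ supplies such a function, so regularity is automatic in the projective case. If one only assumes completeness, then $\Sigma_\mathcal{V}$ itself may fail to be regular, but one can choose any regular triangulation $\Sigma'$ of $P$ on the vertex set $\{u_\rho\}$, and invoke Corollary \ref{Cor:VGITforGor} to obtain an equivalence $\dbcoh{\mathcal{X}_{\Sigma_\mathcal{V}}}\cong\dbcoh{\mathcal{X}_{\Sigma'}}$, transporting $\mathcal{T}$ to a tilting complex $\mathcal{T}'$ on $\mathcal{X}_{\Sigma'}$ with the same endomorphism algebra.

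Finally, since $\Sigma_\mathcal{V}$ is simplicial, Theorem \ref{Thm:4.12FK18} identifies $\mathcal{X}_{\Sigma_\mathcal{V}}$ with the smooth DM stack $\mathcal{V}=[\tot\omega_X]$ appearing in the hypothesis, and the tilting complex $\mathcal{T}$ on $\mathcal{V}$ is by construction a tilting complex on $\mathcal{X}_{\Sigma_\mathcal{V}}$ (or on $\mathcal{X}_{\Sigma'}$, after the transport step above). Corollary \ref{Cor:TiltCplxworks} then yields that $\Lambda=\End_{\mathcal{X}_{\Sigma_\mathcal{V}}}(\mathcal{T})$ is an NCCR for $R=k[\sigma^\vee\cap M]$, which is the desired conclusion. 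The only non-formal ingredient in this outline is the regularity assertion of the second paragraph, and this is the step I expect to need the most care to state cleanly; everything else is a direct translation of the hypothesis into the language of Corollary \ref{Cor:TiltCplxworks}.
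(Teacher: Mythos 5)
Your proposal follows the same route as the paper: identify $\sigma=\cone(P\times\{1\})$ with $P=\conv\{u_\rho\mid\rho\in\Sigma_X(1)\}$, note that the Gorenstein element is $(0,\dots,0,1)$, observe that $\mathcal{X}_{\Sigma_\mathcal{V}}$ is the smooth DM stack $\mathcal{V}=[\tot\omega_X]$ via Theorem~\ref{Thm:4.12FK18}, and invoke Corollary~\ref{Cor:TiltCplxworks}. The paper's own proof is a compressed version of your first and third paragraphs, so you have correctly captured the intended argument.

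Where you diverge is in explicitly confronting the hypothesis of Corollary~\ref{Cor:TiltCplxworks} that the triangulation be \emph{regular}: the paper's proof simply calls $\Sigma_\mathcal{V}$ a ``star subdivision'' and moves on, whereas you point out that the triangulation of $P$ obtained by coning $\Sigma_X$ at the interior point $0$ is regular precisely when $\Sigma_X$ supports a strictly convex piecewise-linear function, i.e.\ when $X$ is projective. This is a genuine refinement, and it aligns with the paper elsewhere (the abstract and Corollary~\ref{Cor:ParttiltCplxCanBdl} both say ``projective''), so projectivity is almost certainly the intended hypothesis and your verification is the right one. However, your proposed fallback for the merely-complete, non-projective case is circular: Corollary~\ref{Cor:VGITforGor} requires both $X_{\Sigma_\mathcal{V}}$ and $X_{\Sigma'}$ to be semiprojective, and for a simplicial fan with full-dimensional strongly convex support, semiprojectivity of $X_{\Sigma_\mathcal{V}}$ is equivalent to $\Sigma_\mathcal{V}$ arising from a regular subdivision of $\sigma$ --- exactly what you were trying to establish. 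So that VGIT step cannot rescue the non-projective case; the honest fix (which you already flag as the delicate point) is to strengthen the hypothesis on $X$ to projective, after which your argument closes.
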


\begin{proof}
    We note that the cone $\sigma$ is, by construction, Gorenstein with respect to the element $(0,\dots,0,1)$. The star subdivision of $\sigma$ on $(0,\dots,0,1)$ corresponds to the toric stack $[\tot \omega_X]=\mathcal{V}$. By assumption, there is a tilting complex $\mathcal{T}$ on $\mathcal{V}$. Thus we know that $\Lambda=\End_{\dbcoh{\mathcal{V}}}(\mathcal{T})$ has global dimension equal to the dimension of $\dbcoh{\mathcal{V}}$. Since the star subdivision yields a simplicial fan (as the fan giving $X$ is itself simplicial), we have that $\mathcal{V}$ is a smooth DM stack (Theorem \ref{Thm:4.12FK18}), and so $\Lambda$ has finite global dimension. Applying Corollary \ref{Cor:TiltCplxworks} yields the result.
\end{proof}

\noindent In fact, using Theorem \ref{Prop:Parttiltingworks} instead of Corollary \ref{Cor:TiltCplxworks} generalises to give the following result.

\begin{corollary}
\label{Cor:ParttiltCplxCanBdl}
    Let $X$ be a complete simplicial toric variety such that $\mathcal{V}:=[\tot \omega_X]$ admits a partial tilting complex $\mathcal{T}$ with $\End_{\dbcoh{\mathcal{V}}}(\mathcal{T})$ of finite global dimension. Consider the cone $\sigma=|\Sigma_\mathcal{V}|$, where $\Sigma_\mathcal{V}$ is the fan associated to the toric vector bundle $\tot \omega_X$. Then $R=k[\sigma^\vee\cap M]$ has an NCCR.
\end{corollary}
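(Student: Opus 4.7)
The plan is to mirror the proof of Corollary \ref{Cor:TiltCplxCanbdl} essentially verbatim, substituting Theorem \ref{Prop:Parttiltingworks} for Corollary \ref{Cor:TiltCplxworks} at the final step. All the geometric input (simpliciality of $\Sigma_\mathcal{V}$, identification of $\mathcal{V}$ with a Cox stack, Gorenstein property of $\sigma$) is unchanged; the only difference is that the hypothesis on $\mathcal{T}$ is weakened from tilting to partial tilting with finite global dimension, which is precisely the setting Theorem \ref{Prop:Parttiltingworks} is designed to handle.

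First I would unwind the geometry. By construction $\sigma = |\Sigma_\mathcal{V}|$ is the support of the fan associated to $\tot \omega_X$, so it is a Gorenstein cone with Gorenstein element $(0,\dots,0,1)$, and $\Sigma_\mathcal{V}$ is obtained from $\sigma$ by star-subdividing at the interior primitive generator $(0,\dots,0,1)$. Writing $\sigma = \cone(P\times\{1\})$, this corresponds to the triangulation of $P$ obtained by coning a triangulation of $\partial P$ (induced by the fan of $X$) from the interior point $0$; the height function assigning $0$ to the apex and $1$ to every other vertex exhibits this star-subdivision as a regular triangulation of $P$ (after translating, via Lemma \ref{Lem:TranslationOfCone}, so that $0 \in \operatorname{Int}(P)$ — permissible since $X$ is complete and simplicial, so $\tot \omega_X$ has $0$ as an interior ray after suitable normalisation).

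Next I would invoke Theorem \ref{Thm:4.12FK18}: since $X$ is simplicial the fan $\Sigma_\mathcal{V}$ is simplicial, so the associated Cox stack $\mathcal{X}_{\Sigma_\mathcal{V}}$ is a smooth Deligne--Mumford stack canonically identified with $\mathcal{V} = [\tot \omega_X]$. By hypothesis, $\mathcal{T}$ is a partial tilting complex on $\mathcal{V} = \mathcal{X}_{\Sigma_\mathcal{V}}$ whose endomorphism algebra $\Lambda := \End_{\dbcoh{\mathcal{V}}}(\mathcal{T})$ has finite global dimension. Every hypothesis of Theorem \ref{Prop:Parttiltingworks} is thereby verified for the regular triangulation described above, so $\Lambda$ is an NCCR of $R = k[\sigma^\vee \cap M]$.

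There is essentially no obstacle: the upgrade from Corollary \ref{Cor:TiltCplxCanbdl} to this statement is purely a matter of bookkeeping, since Theorem \ref{Prop:Parttiltingworks} has already absorbed the extra work of handling partial tilting complexes (the finite global dimension of $\Lambda$ is no longer automatic from smoothness of the ambient stack and must be supplied as a hypothesis, which we have). The only point worth pausing at is the regularity of the star-subdivision triangulation, but this follows from the explicit height function above and requires no deep combinatorial argument.
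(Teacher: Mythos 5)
Your proposal is correct and follows the paper's own (one-line) route exactly: the paper states this corollary with the remark that one simply substitutes Theorem~\ref{Prop:Parttiltingworks} for Corollary~\ref{Cor:TiltCplxworks} in the proof of Corollary~\ref{Cor:TiltCplxCanbdl}, which is precisely your plan. You usefully make explicit the point the paper leaves implicit — that the star-subdivision of $P$ from the interior point is a \emph{regular} triangulation (it is a pulling triangulation at an interior vertex, realized by your height function or by pulling the apex sufficiently low). One small caveat: the parenthetical invoking Lemma~\ref{Lem:TranslationOfCone} to ``translate so that $0\in\operatorname{Int}(P)$'' is unnecessary — since $\sigma$ is, by definition, the support of the fan of $\tot\omega_X$, the extra ray of $\Sigma_{\mathcal{V}}$ is already $(0,\dots,0,1)$ and $0$ is already interior to $P$; no translation is required, and that sentence can simply be dropped.
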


\begin{example}
\label{Exa:TotPn}
Consider the polyhedron \[P=\conv(e_1,e_2,\dots,e_n,-\sum_{i=1}^ne_i)\subseteq \R^n\] and the cone $\sigma=\cone(P\times\{1\})$. Note that $\sigma=|\Sigma_\mathcal{V}|$, where $\Sigma_\mathcal{V}$ is the fan associated to the total space of the vector bundle $\omega_{\P^n}$. Consider the standard tilting bundle onf $\P^n$ due to Beilinson \cite{Bei78}, $\mathcal{T}_n=\O_{\P^n}\oplus\dots\oplus\O_{\P^n}(n)$. Write the canonical bundle as $\pi:X=\tot \O_{\P^n}(-n-1)\rightarrow \P^n$. Note that $X=\spec(\operatorname{Sym}^\bullet(\O_{\P^n}(n+1))$ and $\O_X(k)=\pi^\ast\O_{\P^n}(k)$. Define the object $\mathcal{T}_X:=\pi^\ast \mathcal{T}_{\P^n}=\O_X\oplus\dots\oplus \O_X(n)$. This object $\mathcal{T}_X$ is tilting on $X$. Indeed, we have
\begin{align*}
\Rhom_X(\mathcal{T}_X,\mathcal{T}_X)&=\Rhom(\mathcal{T}_{\P^n},R\pi_\ast\pi^\ast\mathcal{T}_{\P^n})\\ 
&=\Rhom_{\P^n}(\mathcal{T}_{\P^n},\mathcal{T}_{\P^n}\otimes R\pi_\ast\O_X)\text{ by projection formula}\\
&=\bigoplus_{k\geq 0}\Rhom_{\P^n}(\mathcal{T}_{\P^n},\mathcal{T}_{\P^n}\otimes \O_{\P^n}(n+1)^{k})\\
&=\bigoplus_{k\geq 0}\Rhom_{\P^n}(\mathcal{T}_{\P^n},\mathcal{T}_{\P^n}\otimes \O_{\P^n}((n+1)k))\\
& =\Rhom(\mathcal{T}_{\P^n},\mathcal{T}_{\P^n})\oplus\bigoplus_{k\geq 1}\Rhom_{\P^n}(\mathcal{T}_{\P^n},\mathcal{T}_{\P^n}\otimes\O_{\P^n}((n+1)k))\;
\end{align*}
And so\begin{align*}
\Rhom^{>0}(\mathcal{T}_X,\mathcal{T}_X)&=\Rhom^{>0}(\mathcal{T}_{\P^n},\mathcal{T}_{\P^n})\oplus\bigoplus_{k\ge1}\Rhom^{>0}(\mathcal{T}_{\P^n},\mathcal{T}_{\P^n}\otimes\O_{\P^n}((n+1)k))\\
&=\bigoplus_{k\ge1}\Rhom^{>0}(\mathcal{T}_{\P^n},\mathcal{T}_{\P^n}\otimes\O_{\P^n}((n+1)k))\text{ as }\Ext^i(\mathcal{T}_{\P^n},\mathcal{T}_{\P^n})=0\text{ for }i>0.\\
&= \bigoplus_{k\ge 1} H^{>0}(\P^n,\mathcal{T}_{\P^n}^\vee\otimes\mathcal{T}_{\P^n}\otimes\O_{\P^n}((n+1)k))\\
&= \bigoplus_{k\geq1}H^{>0}\left(\P^n, \bigoplus_{0\le i,j\le n}\O_{\P^n}(i-j+(n+1)k)\right)\\
&=0 \text{ noting that }(n+1)k+i-j\ge1\text{ so cohomology vanishes}.
\end{align*}

Thus, we indeed have $\Rhom_X^{>0}(\mathcal{T}_X,\mathcal{T}_X)=0$ and hence $\mathcal{T}_X$ is partial tilting. To show it is tilting, we require $\Rhom(\mathcal{T}_X,\mathcal{F})=0\Rightarrow \mathcal{F}=0$. Note $0=\Rhom_X(\pi^\ast\mathcal{T}_{\P^n},\mathcal{F})=\Rhom_{\P^n}(\mathcal{T}_{\P^n},R\pi_\ast\mathcal{F})$. But as $\mathcal{T}_{\P^n}$ is tilting on $\P^n$, we thus have $R\pi_\ast\mathcal{F}=0$. Since $\pi$ is an affine, hence flat, morphism, we obtain that $\mathcal{F}=0$. Thus $\mathcal{T}_X$ is tilting and so, using Theorem \ref{Cor:TiltCplxCanbdl}, we obtain an NCCR for $R=k[\sigma^\vee\cap M]$.
\end{example}

It should be noted that Example \ref{Exa:TotPn} is one where the toric varieties involved are smooth, and so by Theorem \ref{Thm:4.12FK18} we have $[\tot\omega_X]\cong \tot\omega_X$. Hence, the existence of the NCCR is already covered by Theorem \ref{Thm;Cor4.15IW}. However, the strength of the corollaries \ref{Cor:TiltCplxCanbdl} and \ref{Cor:ParttiltCplxCanBdl} lies precisely in the fact that we do not require the toric varieties themselves to be smooth, but only simplicial, as the associated stack becomes the relevant smooth object. It is therefore desirable to find a way of producing tilting objects on toric DM stacks. A first case to look at is when these stacks take the form of global quotient stacks. This case has been studied by Novakovi\'{c} \cite{Nova18}, who gives sufficient conditions for the existence of tilting bundles of affine bundles over smooth projective varieties. We aim to generalise their proofs slightly to give us the corresponding statements for tilting complexes on complete, simplicial toric varieties and their associated affine bundles.

Let $G$ be a finite group acting on a smooth projective scheme $X$ and consider the total space $\mathbb{A}(\mathcal{E})=\spec(\operatorname{Sym}(\mathcal{E}))$, where $\mathcal{E}$ is an equivariant locally free sheaf of finite rank. For convenience, we will denote $\operatorname{Sym}^\bullet(\mathcal{E})$ by $S^\bullet(\mathcal{E})$. Note that $G$ acts on $\mathbb{A}(\mathcal{E})$, as $\mathcal{E}$ carries an equivariant structure and that there is an affine $G$-morphism $\pi:\mathbb{A}(\mathcal{E})\rightarrow X$. Novakovi\'{c} proved the following Theorem.
\begin{theorem}[=Theorem 5.1 in \cite{Nova18}]
    \label{Thm:NovaAffineBdlTiltBdl}
    With $X,G, \mathcal{E}$ as above, suppose that $\mathcal{T}$ is a tilting bundle on $[X/G]$. If $H^i(X,\mathcal{T}^\vee\otimes\mathcal{T}\otimes S^l(\mathcal{E}))=0$ for all $i\neq 0$ and all $l>0$, then $\pi^\ast\mathcal{T}$ is a tilting bundle on $[\mathbb{A}(\mathcal{E})/G]$.
\end{theorem}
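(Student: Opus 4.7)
The plan is to verify the three defining properties of a tilting bundle on $[\mathbb{A}(\mathcal{E})/G]$ for the candidate $\pi^{\ast}\mathcal{T}$: that it is a perfect complex (here a vector bundle), that its higher self-Ext vanish, and that it generates $D_{Qch}([\mathbb{A}(\mathcal{E})/G])$. The first is automatic since $\pi$ is flat (in fact affine) and $\mathcal{T}$ is locally free. The map $\pi\colon \mathbb{A}(\mathcal{E})\to X$ is $G$-equivariant and descends to a morphism of quotient stacks, still denoted $\pi$. Because $\pi$ is affine we have $R\pi_{\ast}=\pi_{\ast}$ and $\pi_{\ast}\mathcal{O}_{\mathbb{A}(\mathcal{E})}=\bigoplus_{l\geq 0}S^{l}(\mathcal{E})$ as a $G$-equivariant sheaf on $X$.

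For the Ext-vanishing, I would combine adjunction with the projection formula. Since $\pi^{\ast}\mathcal{T}$ is $\pi_{[X/G]}$-perfect, we get
\[
\Rhom_{[\mathbb{A}(\mathcal{E})/G]}(\pi^{\ast}\mathcal{T},\pi^{\ast}\mathcal{T})\;\cong\;\Rhom_{[X/G]}(\mathcal{T},\mathcal{T}\otimes \pi_{\ast}\mathcal{O}_{\mathbb{A}(\mathcal{E})})\;\cong\;\bigoplus_{l\geq 0}\Rhom_{[X/G]}(\mathcal{T},\mathcal{T}\otimes S^{l}(\mathcal{E})).
\]
The $l=0$ summand is $\End_{[X/G]}(\mathcal{T})$, concentrated in degree $0$ by the tilting hypothesis on $\mathcal{T}$. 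For $l>0$, since $\mathcal{T}$ is locally free I rewrite the summand as the equivariant cohomology $H^{\bullet}([X/G],\mathcal{T}^{\vee}\otimes\mathcal{T}\otimes S^{l}(\mathcal{E}))$. Because $G$ is finite and $\operatorname{char}k=0$, taking $G$-invariants is exact, so this equivariant cohomology is a $G$-summand of the ordinary $H^{\bullet}(X,\mathcal{T}^{\vee}\otimes\mathcal{T}\otimes S^{l}(\mathcal{E}))$, which vanishes in positive degree by hypothesis. Assembling gives $\Ext^{i}_{[\mathbb{A}(\mathcal{E})/G]}(\pi^{\ast}\mathcal{T},\pi^{\ast}\mathcal{T})=0$ for $i\neq 0$.

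For generation, let $\mathcal{F}\in D_{Qch}([\mathbb{A}(\mathcal{E})/G])$ with $\Rhom(\pi^{\ast}\mathcal{T},\mathcal{F})=0$. By $(\pi^{\ast},R\pi_{\ast})$-adjunction this reads $\Rhom_{[X/G]}(\mathcal{T},R\pi_{\ast}\mathcal{F})=0$, and since $\mathcal{T}$ is tilting on $[X/G]$ we conclude $R\pi_{\ast}\mathcal{F}=\pi_{\ast}\mathcal{F}=0$. Because $\pi$ is affine, the pushforward $\pi_{\ast}$ is an equivalence between quasi-coherent sheaves on $[\mathbb{A}(\mathcal{E})/G]$ and quasi-coherent $\pi_{\ast}\mathcal{O}$-modules on $[X/G]$, in particular conservative, so $\mathcal{F}=0$. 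Hence $\pi^{\ast}\mathcal{T}$ generates and is tilting.

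I expect the main subtlety not to be the structural manipulations, which are mostly formal, but the bookkeeping around the equivariant structure: one must check that the projection formula and the adjunction respect the $G$-action, and that the hypothesis phrased in ordinary cohomology of $X$ is indeed strong enough to kill the equivariant cohomology of $[X/G]$ (handled by the $G$-invariants-is-exact argument above). The fact that we are working with a tilting \emph{bundle} rather than a general tilting complex is what makes the projection formula identity an equality rather than merely a spectral sequence, which streamlines the argument.
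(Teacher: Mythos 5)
Your proof is correct and follows essentially the same approach as the paper's proof of the slightly more general Theorem \ref{Thm:NovaGeneralCplx} (the paper cites the tilting-bundle statement to Novakovi\'{c} without reproducing the proof, and then proves the tilting-complex version by exactly the argument you give: affine adjunction, the projection formula $R\pi_\ast\pi^\ast\mathcal{T}\cong\mathcal{T}\otimes S^\bullet(\mathcal{E})$, exactness of taking $G$-invariants in characteristic zero, and conservativity of $\pi_\ast$ for the affine morphism $\pi$). One small correction to your closing remark: no spectral sequence is needed in the complex case either, since a perfect complex is dualizable, so the identification $\Ext^i(\mathcal{T}, S^l(\mathcal{E})\otimes\mathcal{T}) \cong H^i(X, \mathcal{T}^\vee\otimes\mathcal{T}\otimes S^l(\mathcal{E}))$ still holds verbatim; this is precisely how the paper's Theorem \ref{Thm:NovaGeneralCplx} upgrades the statement from tilting bundles to tilting complexes with no extra machinery.
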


\noindent By mildly adapting the proof Novakovi\'{c} provides, we obtain a slightly stronger result than Theorem \ref{Thm:NovaAffineBdlTiltBdl}.

\begin{theorem}
    \label{Thm:NovaGeneralCplx}
    With $X, G, \mathcal{E}$ as above, suppose that $\mathcal{T}$ is a tilting complex on $[X/G]$. If $H^i(X,\mathcal{T}^\vee\otimes \mathcal{T}\otimes S^l(\mathcal{E}))=0$ for all $i\neq 0$ and all $l>0$, then $\pi^\ast\mathcal{T}$ is a tilting complex on $[\mathbb{A}(\mathcal{E})/G]$.
\end{theorem}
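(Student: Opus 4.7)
The plan is to follow Novakovi\'{c}'s proof of Theorem \ref{Thm:NovaAffineBdlTiltBdl} essentially verbatim, observing that each step works for an arbitrary perfect complex on $[X/G]$ rather than only for a vector bundle. The three things to verify are: (i) $\pi^\ast\mathcal{T}$ is a perfect complex; (ii) $\Ext^i_{[\mathbb{A}(\mathcal{E})/G]}(\pi^\ast\mathcal{T},\pi^\ast\mathcal{T})=0$ for $i\neq 0$; and (iii) $\pi^\ast\mathcal{T}$ generates $\operatorname{D}_{Qch}([\mathbb{A}(\mathcal{E})/G])$.

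For (i), note that $\pi:\mathbb{A}(\mathcal{E})\to X$ is affine and flat (an affine bundle), so $\pi^\ast$ preserves perfect complexes; hence $\pi^\ast\mathcal{T}$ is perfect on $[\mathbb{A}(\mathcal{E})/G]$. For (ii), I would apply the adjunction $(\pi^\ast,R\pi_\ast)$ together with the projection formula and the affineness of $\pi$ (so that $R\pi_\ast\mathcal{O}_{\mathbb{A}(\mathcal{E})}=\pi_\ast\mathcal{O}_{\mathbb{A}(\mathcal{E})}=\bigoplus_{l\geq 0}S^l(\mathcal{E})$ as a $G$-equivariant sheaf) to compute
\[
\Rhom_{[\mathbb{A}(\mathcal{E})/G]}(\pi^\ast\mathcal{T},\pi^\ast\mathcal{T})\;\cong\;\Rhom_{[X/G]}\bigl(\mathcal{T},\mathcal{T}\otimes R\pi_\ast\mathcal{O}_{\mathbb{A}(\mathcal{E})}\bigr)\;\cong\;\bigoplus_{l\geq 0}\Rhom_{[X/G]}(\mathcal{T},\mathcal{T}\otimes S^l(\mathcal{E})).
\]
The $l=0$ summand has no higher cohomology by the tilting hypothesis on $\mathcal{T}$. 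For $l>0$, since $\mathcal{T}$ is perfect we may identify $\Rhom_{[X/G]}(\mathcal{T},\mathcal{T}\otimes S^l(\mathcal{E}))$ with $R\Gamma([X/G],\mathcal{T}^\vee\otimes\mathcal{T}\otimes S^l(\mathcal{E}))$, and since $G$ is finite and $\operatorname{char}k=0$, the invariants functor is exact, so $H^i([X/G],-)=H^i(X,-)^G$; the assumed vanishing on $X$ then forces the higher Ext's on $[X/G]$ to vanish.

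For (iii), suppose $\mathcal{F}\in\operatorname{D}_{Qch}([\mathbb{A}(\mathcal{E})/G])$ satisfies $\Rhom(\pi^\ast\mathcal{T},\mathcal{F})=0$. Adjunction gives $\Rhom_{[X/G]}(\mathcal{T},R\pi_\ast\mathcal{F})=0$, so by the tilting (hence generating) property of $\mathcal{T}$ on $[X/G]$ we get $R\pi_\ast\mathcal{F}=0$. Since $\pi$ is affine we have $R\pi_\ast=\pi_\ast$ on quasi-coherent sheaves, and $\pi_\ast$ is faithfully exact on such, so the vanishing of each cohomology sheaf $\pi_\ast\mathcal{H}^i(\mathcal{F})$ forces $\mathcal{H}^i(\mathcal{F})=0$, hence $\mathcal{F}=0$.

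The only point requiring care, and what I expect to be the mildly delicate step, is tracking the $G$-equivariant structure through the projection formula and identifying equivariant cohomology on $[X/G]$ with $G$-invariants of ordinary cohomology on $X$; both steps are standard for finite groups in characteristic zero, but without these one could not reduce to the hypothesis stated on $X$ rather than on $[X/G]$. Everything else is routine once one rechecks that Novakovi\'{c}'s arguments never used that $\mathcal{T}$ was concentrated in a single degree.
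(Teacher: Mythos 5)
Your proposal is correct and follows essentially the same route as the paper's proof: affineness of $\pi$ for perfectness of $\pi^\ast\mathcal{T}$, adjunction plus projection formula to reduce the higher $\Ext$'s to $G$-invariants of the assumed vanishing cohomology on $X$, and the same adjunction-plus-affineness argument for generation. The two points you flag as delicate (commuting the $G$-equivariant structure through the projection formula, and identifying stacky cohomology with $G$-invariants) are handled the same way in the paper, which writes everything in terms of $\Hom_G$ and then takes $(-)^G$.
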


\begin{proof}

Note that $\pi$ is affine, and so $L\pi^\ast=\pi^\ast$ and $\pi^\ast\mathcal{T}$ is a perfect complex. In the following, we write $\Hom_G(\mathcal{F},\mathcal{G})$ to refer to the $G$-equivariant morphisms from $\mathcal{F}$ to $\mathcal{G}$, i.e. $\Hom_G(\mathcal{F},\mathcal{G})=\Rhomi{0}^G_{X}(\mathcal{F},\mathcal{G})$. 
We first want to show that $\pi^\ast \mathcal{T}$ is partial tilting, i.e. $\Ext^i_{}(\pi^\ast\mathcal{T},\pi^\ast\mathcal{T})=\Hom_G^i(\pi^\ast\mathcal{T},\pi^\ast\mathcal{T}[i])=0$ for $i\neq 0$. 
\begin{align*}
    \Hom_G(\pi^\ast \mathcal{T},\pi^\ast\mathcal{T}[i])  &\simeq \Hom_G(\mathcal{T},R\pi_\ast\pi^\ast\mathcal{T}[i])\quad\text{Adjunction of } L\pi^\ast, R\pi_\ast\\
     &\simeq \Hom_G(\mathcal{T},S^\bullet(\mathcal{E})\otimes \mathcal{T}[i])\quad\text{projection formula}\\
     & \simeq \Hom(\mathcal{T},S^\bullet(\mathcal{E})\otimes \mathcal{T}[i])^G\quad\text{equivariant morphisms}.\;
\end{align*}

To show that for $i\neq 0$ this is indeed zero, fix $l>0$ and observe\[
\Hom(\mathcal{T}, S^l(\mathcal{E})\otimes \mathcal{T}[i])\simeq\Ext^i(\mathcal{T}, S^l(\mathcal{E})\otimes \mathcal{T})\simeq H^i(X,\mathcal{T}^\vee\otimes \mathcal{T}\otimes S^l(\mathcal{E})).
\]
The first equivalence follows as $\Ext^i(\mathcal{F},\mathcal{G})=\Hom(\mathcal{F},\mathcal{G}[i])$, and the second equivalence follows as $\mathcal{T}$ is perfect, so dualisable, and thus 
$\Ext^i(\mathcal{T},S^l(\mathcal{E})\otimes\mathcal{T})=H^i(\Rshom(\mathcal{T},S^l(\mathcal{E}))\otimes \mathcal{T})=H^i(X,\mathcal{T}^\vee\otimes \mathcal{T}\otimes S^l(\mathcal{E}))$. 

By assumption, this is 0 for all $i\neq 0$ and $l>0$ and so the $G$-invariants are also 0. Thus, the higher $\Ext$ vanish and $\pi^\ast\mathcal{T}$ is partial tilting. 

Now assume $\Rhom(\pi^\ast\mathcal{T},\mathcal{F})=0$ for $\mathcal{F}\in\dbcoh{[\mathbb{A}(\mathcal{E})/G]}$. By adjunction, this implies $\Rhom(\mathcal{T},R\pi_\ast \mathcal{F})=0$. But $\mathcal{T}$ is tilting on $[X/G]$, and so $R\pi_\ast\mathcal{F}=0$. Since $\pi$ is affine, this implies $\mathcal{F}=0$ and thus $\pi^\ast\mathcal{T}$ is tilting on $[\mathbb{A}(\mathcal{E})/G]$, as desired.
\end{proof}

Combining this with Corollary \ref{Cor:TiltCplxworks} immediately gives the following.

\begin{corollary}
    \label{Cor:Nova+myresult}
    Let $X_\Sigma$ be a projective simplicial toric variety and consider the cone $\sigma=|\Sigma_\mathcal{V}|$, where $\Sigma_\mathcal{V}$ is the fan associated to the toric vector bundle $\tot \omega_{X_\Sigma}$. Suppose $\mathcal{X}_{\Sigma}\cong [X/G]$ for some smooth, projective $X$ and a finite group $G$, that $\mathcal{T}$ is tilting on $[X/G]$ and that $[\tot \omega_{X_\Sigma}]=[\tot \mathcal{E}^\vee/G]$ for some $G$-equivariant vector bundle $\mathcal{E}$. If $H^i(X,\mathcal{T}^\vee\otimes \mathcal{T}\otimes \operatorname{S}^l(\mathcal{E}))=0$ for $i\neq 0, l>0$, then $R=k[\sigma^\vee\cap M]$ has an NCCR.
\end{corollary}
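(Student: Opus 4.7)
The plan is to combine Theorem \ref{Thm:NovaGeneralCplx} with Corollary \ref{Cor:TiltCplxworks}. First I would apply Theorem \ref{Thm:NovaGeneralCplx} to the data $(X,G,\mathcal{E})$: by assumption $\mathcal{T}$ is a tilting complex on $[X/G]$, and the cohomology vanishing $H^{i}(X,\mathcal{T}^\vee\otimes\mathcal{T}\otimes S^{l}(\mathcal{E}))=0$ for $i\neq 0$, $l>0$ is exactly the hypothesis of that theorem. Hence if $\pi:\tot\mathcal{E}^\vee\to X$ denotes the projection, the pullback $\pi^\ast\mathcal{T}$ is a tilting complex on the quotient stack $[\tot\mathcal{E}^\vee/G]$.

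Next, I would use the identification $[\tot\mathcal{E}^\vee/G]=[\tot\omega_{X_\Sigma}]$ assumed in the statement. Writing $\sigma=\cone(P\times\{1\})$ in Gorenstein form, the fan $\Sigma_\mathcal{V}$ associated to $\tot\omega_{X_\Sigma}$ is obtained from $\sigma$ by taking the cone over the triangulation of $P$ produced by placing the Gorenstein element (an interior lattice point, possibly after applying Lemma \ref{Lem:TranslationOfCone} to move it to the origin) and coning with the triangulation of $\partial P$ induced by $\Sigma$. Equivalently, $\Sigma_\mathcal{V}$ is the star subdivision of $\sigma$ at $(0,\dots,0,1)$, restricted to the fan refining $\sigma$ that comes from the (regular) triangulation underlying $X_\Sigma$. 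So $\pi^\ast\mathcal{T}$ is a tilting complex on $\mathcal{X}_{\Sigma_\mathcal{V}}$.

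To conclude, I need the triangulation of $P$ determined by $\Sigma_\mathcal{V}$ to be regular, so that Corollary \ref{Cor:TiltCplxworks} applies. Since $X_\Sigma$ is projective and simplicial, the fan $\Sigma$ is obtained from a regular triangulation of $\partial P$; performing a star subdivision at an interior lattice point of $P$ preserves regularity (one can take a strictly convex support function for $\Sigma$ and perturb its value at the interior point downward to realise $\Sigma_\mathcal{V}$ as the normal fan of a polytope). Hence the triangulation of $P$ giving $\Sigma_\mathcal{V}$ is regular, and Corollary \ref{Cor:TiltCplxworks} yields that $\Lambda=\End_{\mathcal{X}_{\Sigma_\mathcal{V}}}(\pi^\ast\mathcal{T})$ is an NCCR of $R=k[\sigma^\vee\cap M]$.

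The only non-formal step is the regularity claim in the third paragraph; the rest is assembling results already proven in the paper. I would expect this regularity verification, i.e.\ exhibiting explicitly a piecewise linear strictly convex function on $|\Sigma_\mathcal{V}|$, to be the main point requiring care, although it should follow straightforwardly from the projectivity of $X_\Sigma$ plus the standard fact that placing a vertex in the interior of a cell of a regular subdivision preserves regularity.
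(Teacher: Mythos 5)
Your proposal is correct and takes essentially the same route the paper does: the paper proves this corollary in a single line, saying that Theorem \ref{Thm:NovaGeneralCplx} combined with Corollary \ref{Cor:TiltCplxworks} immediately gives it. You carry out exactly that combination, and the one substantive addition you make — verifying that the triangulation of $P$ underlying $\Sigma_\mathcal{V}$ is regular, by lifting via a strictly convex support function for $\Sigma$ and pulling down the height at the interior point — is a genuine check that the paper leaves implicit (there and in the nearly identical Corollary \ref{Cor:TiltCplxCanbdl}), and your argument for it is sound. Two small imprecisions worth tidying: $\Sigma_\mathcal{V}$ is not literally the star subdivision of $\sigma$ at $(0,\dots,0,1)$ unless $\Sigma$ is the face fan of $P$ — it is the cone over the triangulation of $\partial P$ given by $\Sigma$ further coned to the origin, which your second paragraph describes correctly and your explicit support-function construction handles in general; and Lemma \ref{Lem:TranslationOfCone} is not actually needed here, since the origin is automatically the Gorenstein interior point when $\Sigma_\mathcal{V}$ is built as the fan of $\tot\omega_{X_\Sigma}$. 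Neither affects the validity of the proof.
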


Note that for a finite group $G$ acting on a smooth projective $X$, there is a $G$-morphism $f:X\rightarrow \spec k$, where $G$ acts trivially on $\spec k$. For $W$ an irreducible representation of $G$, note that the sheaf $f^\ast W=\O_X\otimes W$ has a natural $G$-equivariant structure. By abuse of notation, we will write $W$ for $f^\ast W$ when the context is clear. We examine Theorem 4.1 in \cite{Nova18}, which combines particularly well with Theorem \ref{Thm:NovaGeneralCplx}.
\begin{theorem}[=Theorem 4.1 \cite{Nova18}]
    \label{Thm:Nova4.1}
    Let $X$ be a smooth projective scheme and $G$ a finite group acting on $X$. Suppose there is a $\mathcal{T}\in \operatorname{D}(\operatorname{Qcoh}([X/G])$ which, considered as object in $\operatorname{D}(\operatorname{Qcoh}(X))$, is tilting on $X$.  Denoting by $W_j$ the irreducible representations of $G$, the object $\mathcal{T}_G:=\bigoplus_j T\otimes W_j$ is tilting on $[X/G]$.
\end{theorem}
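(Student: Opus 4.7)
The plan is to verify directly the two defining properties of a tilting complex on the quotient stack $[X/G]$ for $\mathcal{T}_G := \bigoplus_j \mathcal{T} \otimes W_j$: partial tiltingness (perfectness plus vanishing of higher self-$\Ext$) and generation of $\operatorname{D}_{Qch}([X/G])$. The entire argument runs by translating statements on $[X/G]$ into $G$-equivariant statements on $X$ via the identification
\[
\Rhom_{[X/G]}(-, -) = \Rhom_X(-, -)^G,
\]
which is legitimate in our setting because $G$ is a finite group and $k$ has characteristic zero, so the functor of $G$-invariants is exact and no higher derived invariants appear.

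First I would dispatch perfectness and partial tiltingness. Perfectness of $\mathcal{T}_G$ is immediate: $\mathcal{T}$ is perfect on $X$ by hypothesis, each $W_j$ is finite-dimensional, and a finite direct sum of perfect complexes is perfect, with the equivariant structures on $\mathcal{T}$ and on the $W_j$ combining to give the equivariant structure on $\mathcal{T}_G$. For the higher $\Ext$-vanishing, I would compute
\[
\Ext^i_{[X/G]}(\mathcal{T}_G, \mathcal{T}_G) = \left( \bigoplus_{j,k} \Ext^i_X(\mathcal{T}, \mathcal{T}) \otimes_k \Hom_k(W_j, W_k) \right)^G,
\]
which vanishes for $i \neq 0$ directly from the tiltingness of $\mathcal{T}$ on $X$.

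For generation, suppose $\Rhom_{[X/G]}(\mathcal{T}_G, \mathcal{F}) = 0$ for some $\mathcal{F} \in \operatorname{D}_{Qch}([X/G])$. Summand by summand this reads $(\Rhom_X(\mathcal{T}, \mathcal{F}) \otimes_k W_j^\vee)^G = 0$ for every irreducible $W_j$, equivalently the space of equivariant maps from $W_j$ into each cohomology of $\Rhom_X(\mathcal{T}, \mathcal{F})$ vanishes. Since every $G$-representation in characteristic zero decomposes into isotypic pieces indexed by the irreducibles, this forces $\Rhom_X(\mathcal{T}, \mathcal{F}) = 0$, and then the tiltingness of $\mathcal{T}$ on $X$ immediately yields $\mathcal{F} = 0$ in $\operatorname{D}_{Qch}(X)$, hence also in $\operatorname{D}_{Qch}([X/G])$. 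The one point requiring real care is the identification of $\Rhom$ on $[X/G]$ with $G$-invariants of $\Rhom_X$ at the derived level; this is precisely where the finite-group and characteristic-zero hypotheses are used, and once it is in place the remainder of the argument reduces to elementary representation-theoretic bookkeeping.
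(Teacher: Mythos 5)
Your proposal is correct, and it matches the approach of Novaković's cited proof (the paper itself does not reproduce the argument but the canonical isomorphism $\Hom_X(\mathcal{T}\otimes W_j,\mathcal{T}\otimes W_m)\simeq \Hom_X(\mathcal{T},\mathcal{T})\otimes\Hom(W_j,W_m)$ you use is exactly the one the paper later attributes to Novaković's proof of this theorem in its own proof of Theorem \ref{Thm:parttiltNova4.1}). All the key steps are sound: exactness of $(-)^G$ for finite $G$ in characteristic zero gives $\Rhom_{[X/G]}=\Rhom_X(-,-)^G$ at the derived level, perfectness and higher-$\Ext$ vanishing follow immediately, and the generation argument correctly uses semisimplicity of $k[G]$ — since $\bigl(\Ext^i_X(\mathcal{T},\mathcal{F})\otimes W_j^\vee\bigr)^G\cong\Hom_G\bigl(W_j,\Ext^i_X(\mathcal{T},\mathcal{F})\bigr)$, vanishing of these for all irreducibles $W_j$ forces each $\Ext^i_X(\mathcal{T},\mathcal{F})$ to be zero, after which generation on $X$ closes the argument.
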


We also wish to consider partial tilting complex with finite global dimension. The proof of Theorem \ref{Thm:Nova4.1} does still apply to a partial tilting object; the finite global dimension, however, is more subtle.

\begin{theorem}
    \label{Thm:parttiltNova4.1}
    Consider $X,G$ as in Theorem \ref{Thm:Nova4.1} and $\mathcal{T}$ to be partial tilting with $\gldim\End_X(\mathcal{T})<\infty$.
    Then $\mathcal{T}_G$ as in Theorem \ref{Thm:Nova4.1} is partial tilting and $\gldim\End_{[X/G]}(\mathcal{T}_G)<\infty$.
\end{theorem}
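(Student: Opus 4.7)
The argument splits naturally into showing the partial tilting property and controlling the global dimension of $\End_{[X/G]}(\mathcal{T}_G)$. For the partial tilting step, the plan is to reproduce the proof of Theorem \ref{Thm:Nova4.1} verbatim, merely substituting the weaker hypothesis on $\mathcal{T}$. Since $|G|$ is invertible in $k$, the functor of $G$-invariants is exact and commutes with $\Ext$, giving
\[
\Ext^i_{[X/G]}(\mathcal{T}\otimes W_j,\mathcal{T}\otimes W_k) \cong [\Ext^i_X(\mathcal{T},\mathcal{T})\otimes \Hom_k(W_j,W_k)]^G.
\]
By hypothesis the inner $\Ext^i_X(\mathcal{T},\mathcal{T})$ vanishes for $i>0$, and hence so does each $\Ext^i_{[X/G]}(\mathcal{T}\otimes W_j,\mathcal{T}\otimes W_k)$; summing over $j,k$ shows that $\mathcal{T}_G$ is partial tilting on $[X/G]$.

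The more substantive part is to show $\gldim A < \infty$ for $A := \End_{[X/G]}(\mathcal{T}_G)$. My plan is to identify $A$, up to Morita equivalence, with the skew group algebra $\Lambda \# G$, where $\Lambda = \End_X(\mathcal{T})$ carries the $G$-action induced by the equivariant structure of $\mathcal{T}$. To this end I would introduce the auxiliary object $\mathcal{T}' := \mathcal{T}\otimes kG$ with the diagonal $G$-action. Since Maschke's theorem gives $kG \cong \bigoplus_j W_j^{\oplus \dim W_j}$ as a $G$-representation, the objects $\mathcal{T}_G$ and $\mathcal{T}'$ have identical indecomposable summands (only their multiplicities differ), so $\End_{[X/G]}(\mathcal{T}_G)$ and $\End_{[X/G]}(\mathcal{T}')$ are Morita equivalent. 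Viewing $\mathcal{T}'$ as the induction of the underlying non-equivariant $\mathcal{T}$ from the trivial subgroup to $G$, Frobenius reciprocity then produces a canonical isomorphism $\End_{[X/G]}(\mathcal{T}') \cong \Lambda \# G$ of $k$-algebras.

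With this identification in hand, the proof concludes by invoking the classical result that $\gldim(\Lambda \# G) \leq \gldim \Lambda < \infty$ whenever $|G|$ is invertible in $k$, which is provable via the exact forgetful--induction adjoint pair between $\Lambda$-modules and $(\Lambda\# G)$-modules. Combined with Morita-invariance of the global dimension, this yields $\gldim A < \infty$, as required. The principal technical hurdle is the careful verification of the algebra isomorphism $\End_{[X/G]}(\mathcal{T}\otimes kG)\cong\Lambda\# G$: while standard, tracking the diagonal $G$-action on $\mathcal{T}\otimes kG$, the Frobenius reciprocity conventions, and the multiplication rule of $\Lambda\# G$ requires genuine bookkeeping, and one must be mindful of whether the identification lands in $\Lambda \# G$ or its opposite; none of these subtleties affect the final finiteness statement, since opposite algebras share global dimensions.
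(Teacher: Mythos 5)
Your proof is correct, but it takes a genuinely different and, in fact, more reliable route than the paper's. After the partial-tilting step, which is the same in both, the paper attempts to identify $\End_{[X/G]}(\mathcal{T}_G)$ directly with $(\Lambda^G)^{\oplus L}$ by applying Schur's Lemma to kill the cross terms $\Hom(W_j,W_m)$ for $j\neq m$. That step is not sound: Schur's Lemma computes $\Hom_G(W_j,W_m)$, but the space appearing \emph{inside} the $G$-invariants in the computation is $\Hom_k(W_j,W_m)\cong W_j^\vee\otimes W_m$, which is nonzero for all $j,m$, and the summand $\bigl(\Hom_X(\mathcal{T},\mathcal{T})\otimes W_j^\vee\otimes W_m\bigr)^G$ can be nonzero even when $W_j\not\cong W_m$. (A concrete instance: $X=\P^1$, $G=\Z/2$ swapping coordinates, $\mathcal{T}=\O\oplus\O(1)$, where $\End_X(\mathcal{T})$ decomposes as three copies of the trivial plus one copy of the sign representation, so the two off-diagonal blocks each contribute a line, making $\dim\End_{[X/G]}(\mathcal{T}_G)=8$, not $6$ as the formula $(\Lambda^G)^{\oplus 2}$ would give.) The subsequent projective-resolution argument, which posits a ring map $\Lambda\to\Lambda^G$ and restricts modules along it, is similarly hard to make precise. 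Your route sidesteps all of this: the Morita equivalence of $\End_{[X/G]}(\mathcal{T}_G)$ with $\End_{[X/G]}(\mathcal{T}\otimes kG)$ because the two objects have the same additive closure, the Frobenius-reciprocity identification of the latter with the skew group algebra $\Lambda\#G$, and the classical bound $\gldim(\Lambda\#G)\le\gldim\Lambda$ for $|G|$ invertible in $k$ are all standard and correct, so your argument does establish the theorem. In short, the conclusion of the statement is true, and your skew-group-algebra proof is a cleaner way to get there than what appears in the paper.
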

\begin{proof}
    The same proof as for Theorem \ref{Thm:Nova4.1} applies to show that $\mathcal{T}_G$ is partial tilting. For the global dimension, we observe that, as $G$ is finite:
    \begin{align*}
        \End_{[X/G]}(\mathcal{T}_G)&= (\Hom(\mathcal{T}_G,\mathcal{T}_G))^G\\
        &=\left(\bigoplus_{j,m}\Hom(\mathcal{T}\otimes W_j, \mathcal{T} \otimes W_m)\right)^G \\
        &= \left( \bigoplus_{j,m}\Hom(\mathcal{T},\mathcal{T})\otimes \Hom( W_j,W_m)\right)^G\\        &=\bigoplus_{j,m}\left(\Hom(\mathcal{T},\mathcal{T})\otimes\Hom(W_j,W_m)\right)^G\\
        \end{align*}
    Here, we used the canonical isomorphisms on $X$ also mentioned in the proof of Theorem \ref{Thm:Nova4.1} in \cite{Nova18} \[
    \Hom(\mathcal{T}\otimes W_j,\mathcal{T}\otimes W_m)\simeq \Hom(\mathcal{T},\mathcal{T})\otimes\Hom(W_j,W_m).
    \]
    Since $W_j, W_m$ are irreducible representations of $G$, Schur's Lemma tells us that \[\Hom(W_j,W_m)\cong\begin{cases}
        & k \text{ if } W_j\cong W_m\\
        & 0 \text{ otherwise}\;
    \end{cases}.\]

    \noindent Thus, we have \[
    \End_{[X/G]}(\mathcal{T}_G)\cong\bigoplus_{j}\left(\Hom(\mathcal{T},\mathcal{T})\otimes k\right)^G\cong\left(\End_X(\mathcal{T})^{\oplus L}\right)^G,
    \]
    where $L$ is the number of irreducible representations of $G$. Note that $()^G$ is additive as functor (i.e. $(M\oplus N)^G=M^G\oplus N^G$) and denote $\Lambda=\End_X(T)$. Thus, $(\Lambda^{\oplus L})^G\cong (\Lambda^G)^{\oplus L}$. Consider the algebra $\Hom(\mathcal{T},\mathcal{T})^G=\Lambda^G$. Let $M$ be a $\Lambda^G$-module. Note that $\Lambda$ and $\Lambda^G$ are endomorphism algebras and so the functor $()^G$ gives a ring homomorphism $\Lambda\rightarrow \Lambda^G$. Thus $M$ also has a $\Lambda$-module structure. By the finite global dimension of $\Lambda$, there is a projective resolution of length $\le \gldim\Lambda$ of $M$, $0\rightarrow P^\bullet\rightarrow M\rightarrow 0$. Now: 
   \begin{align*}
        P \text{ projective} &\Leftrightarrow \exists Q\in\operatorname{mod}\Lambda, n\in \Z\text{ such that } P\oplus Q=\Lambda^{\oplus N}\\
        & \Rightarrow (P\oplus Q)^G=(\Lambda^{\oplus N})^G\\
        & \Leftrightarrow P^G\oplus Q^G =(\Lambda^G)^{\oplus N}\\
        & \Leftrightarrow P^G \text{ projective in }\operatorname{mod}\Lambda^G.
    \end{align*}
    Thus, $P^G$ is projective if $P$ is.
     Applying the functor $()^G$ to the projective resolution of $M$ thus produces a projective resolution of $\Lambda^G$-modules of length $\le \gldim\Lambda$ of $M$.  Since $\End_{[X/G]}(\mathcal{T}_G)$ is a direct sum of several copies of $\Lambda^G$, its global dimension is thus finite, as required.
\end{proof}

We conjecture further that Theorem \ref{Thm:NovaAffineBdlTiltBdl} and Corollary \ref{Cor:Nova+myresult} also generalise in this manner.
\begin{conjecture}
    \label{Thm:BdlParTiltNova}
    With $X, G, \mathcal{E}$ as above, suppose that $\mathcal{T}$ is a partial tilting complex on $[X/G]$ such that $\gldim \End(\mathcal{T})<\infty$. If $H^i(X,\mathcal{T}^\vee\otimes \mathcal{T}\otimes S^l(\mathcal{E}))=0$ for all $i\neq 0$ and all $l>0$, then $\pi^\ast\mathcal{T}$ is a partial tilting complex on $[\mathbb{A}(\mathcal{E})/G]$ such that $\gldim\End(\pi^\ast\mathcal{T})<\infty$.
\end{conjecture}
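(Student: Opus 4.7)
The conjecture naturally splits into two assertions: first, that $\pi^\ast\mathcal{T}$ is partial tilting on $[\mathbb{A}(\mathcal{E})/G]$, and second, that the endomorphism algebra $A := \End(\pi^\ast\mathcal{T})$ has finite global dimension. The first assertion should follow \emph{verbatim} from the proof of Theorem \ref{Thm:NovaGeneralCplx}, since the chain of isomorphisms
\[
\Hom_G(\pi^\ast\mathcal{T}, \pi^\ast\mathcal{T}[i]) \cong \Hom(\mathcal{T}, S^\bullet(\mathcal{E})\otimes\mathcal{T}[i])^G \cong \bigoplus_{l\ge 0} H^i(X, \mathcal{T}^\vee\otimes\mathcal{T}\otimes S^l\mathcal{E})^G
\]
uses only the affine-morphism adjunction and projection formula together with dualisability of the perfect complex $\mathcal{T}$—none of which requires $\mathcal{T}$ to generate $\dbcoh{[X/G]}$. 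The $l=0$ summand vanishes in positive cohomological degrees by partial tilting of $\mathcal{T}$, and the $l>0$ summands vanish by the hypothesis. Taking $G$-invariants identifies $A$ with the graded algebra $\bigoplus_{l\ge 0}\Hom_{[X/G]}(\mathcal{T},\mathcal{T}\otimes S^l\mathcal{E})$ whose degree-zero piece is $\Lambda := \End_{[X/G]}(\mathcal{T})$.

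For the second assertion, my approach would be to exploit the Koszul resolution of the zero section $i\colon [X/G]\hookrightarrow [\mathbb{A}(\mathcal{E})/G]$. Since $\pi$ is a vector bundle of rank $r = \operatorname{rk}\mathcal{E}$, the sheaf $i_\ast\mathcal{O}_{[X/G]}$ admits a finite Koszul resolution of length $r$ by pullbacks of the wedge powers of $\mathcal{E}^\vee$. Tensoring with $\pi^\ast\mathcal{T}$ gives a finite resolution of $i_\ast\mathcal{T}$ by objects $\pi^\ast(\mathcal{T}\otimes\wedge^k\mathcal{E}^\vee)$, and applying $\Rhom(\pi^\ast\mathcal{T},-)$ together with the adjunction identity $\Rhom(\pi^\ast\mathcal{T}, i_\ast\mathcal{T})\simeq \Rhom_{[X/G]}(\mathcal{T},\mathcal{T})=\Lambda$ should exhibit $\Lambda$ as an $A$-module of bounded projective dimension. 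Combined with finite global dimension of $\Lambda = A/A_{>0}$, a Cartan--Eilenberg change-of-rings spectral sequence of the form
\[
E_2^{p,q} = \Ext^p_\Lambda(\operatorname{Tor}^A_q(M,\Lambda), N)\Rightarrow \Ext^{p+q}_A(M,N)
\]
would then yield the target bound $\gldim A \le \gldim\Lambda + r$.

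The principal obstacle lies in the Koszul step: the resolution terms $\Rhom(\pi^\ast\mathcal{T},\pi^\ast(\mathcal{T}\otimes\wedge^k\mathcal{E}^\vee))$ must be $A$-projective—or at least of finite $A$-projective dimension—for the strategy above to genuinely produce a projective resolution of $\Lambda$. The stated hypothesis controls the symmetric powers of $\mathcal{E}$ but is silent on the wedge powers, so one may well need a supplementary vanishing of the form $H^i(X,\mathcal{T}^\vee\otimes\mathcal{T}\otimes\wedge^k\mathcal{E}^\vee\otimes S^l\mathcal{E})=0$ for $i\neq 0$ and all $k,l$, or a more subtle graded argument leveraging the augmentation $A\twoheadrightarrow\Lambda$ and Nakayama-type reduction on graded $A$-modules (followed by flat descent to the ungraded setting). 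I expect the rank-one case $\mathcal{E}=\omega_{X_\Sigma}$, which is the principal intended application, to admit the cleanest treatment: the Koszul complex has only two terms $\pi^\ast\mathcal{E}^\vee\to\mathcal{O}$, and many of the wedge-power complications disappear, leaving only the more tractable task of verifying that $\pi^\ast(\mathcal{T}\otimes\mathcal{E}^\vee)$ determines a projective $A$-module.
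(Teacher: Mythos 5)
The statement you are trying to prove is, in the paper, explicitly left as a \emph{conjecture}, not a theorem. The authors write, just before stating it, that ``while the partial tilting property poses no issue, controlling the global dimension is more subtle, but we conjecture that the following holds.'' So there is no paper proof to compare against, and the very part you flag as the obstruction is exactly the part the authors were unable to establish.

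Your first half — that $\pi^\ast\mathcal{T}$ is partial tilting — is correct and does indeed follow verbatim from the computation in Theorem \ref{Thm:NovaGeneralCplx}: the chain of isomorphisms via adjunction, projection formula, and dualisability of perfect complexes nowhere uses generation, and the vanishing hypothesis kills all summands with $l>0$ while partial tilting of $\mathcal{T}$ kills the $l=0$ term. This is in line with the authors' own assessment.

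The second half is where the proposal becomes a sketch rather than a proof, and you are right to be suspicious of it; the gap is real and is arguably larger than you describe. Beyond the wedge-power issue you raise (the hypothesis gives no control of $H^i(X,\mathcal{T}^\vee\otimes\mathcal{T}\otimes\wedge^k\mathcal{E}^\vee\otimes S^l\mathcal{E})$, so there is no reason the Koszul terms $\Rhom(\pi^\ast\mathcal{T},\pi^\ast(\mathcal{T}\otimes\wedge^k\mathcal{E}^\vee))$ should be $A$-projective), there is a second difficulty: the change-of-rings spectral sequence you invoke only computes $\Ext_A(M,N)$ when $N$ is a $\Lambda$-module pulled back along the augmentation $A\twoheadrightarrow\Lambda$. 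To bound $\gldim A$ one must control $\Ext_A(M,N)$ for \emph{arbitrary} $A$-modules $N$, and the passage from ``finite projective dimension of $\Lambda$ over $A$ plus finite global dimension of $\Lambda$'' to ``finite global dimension of $A$'' requires additional hypotheses — e.g.\ some connectedness or noetherian graded structure — that you have not supplied and that may not hold for $A=\bigoplus_{l\ge 0}\Hom_{[X/G]}(\mathcal{T},\mathcal{T}\otimes S^l\mathcal{E})$ in general. (Already in the tilting case, the finite global dimension of $\End(\pi^\ast\mathcal{T})$ comes for free from the equivalence with $\dbcoh{[\mathbb{A}(\mathcal{E})/G]}$ and the smoothness of the latter; when $\pi^\ast\mathcal{T}$ is merely partial tilting no such categorical shortcut is available, which is precisely why the authors left this open.) So the proposal correctly identifies the target and a plausible line of attack, but it does not close the conjecture, and the obstacles you flag, plus the spectral-sequence issue above, are genuine.
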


\begin{conjecture}
    \label{Cor:Nova+myresultpart}
    Let $X_\Sigma$ be a simplicial projective toric variety and consider the cone $\sigma=|\Sigma_\mathcal{V}|$, where $\Sigma_\mathcal{V}$ is the fan associated to the toric vector bundle $\tot \omega_{X_\Sigma}$. Suppose $\mathcal{X}_\Sigma=[X/G]$ for $X$ a smooth, projective variety and $G$ a finite group such that $[\tot \omega_{X_\Sigma}]\cong [\mathbb{A}(\mathcal{E})/G]$ for a $G$-equivariant vector bundle on $X$. Suppose $\mathcal{T}_G$ is partial tilting on $[X/G]=\mathcal{X}_\Sigma$ such that $\gldim\End_{[X/G]}(\mathcal{T}_G)<\infty$. If $H^i(X,\mathcal{T}^\vee\otimes \mathcal{T}\otimes S^l(\mathcal{E}))=0$ for $i\neq 0, l>0$, then $R=k[\sigma^\vee\cap M]$ has an NCCR.
\end{conjecture}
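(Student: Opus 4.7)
The plan is to derive the conclusion as a direct corollary of Conjecture \ref{Thm:BdlParTiltNova} combined with Corollary \ref{Cor:ParttiltCplxCanBdl}, mirroring the way Corollary \ref{Cor:Nova+myresult} is obtained from Theorem \ref{Thm:NovaGeneralCplx}. The new subtlety, relative to the tilting case, is that rather than producing a genuine tilting complex we must additionally track finite global dimension of the endomorphism algebra along the affine-bundle pullback.

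First I would identify the relevant stacks: since $\mathcal{X}_\Sigma \cong [X/G]$ and $[\tot \omega_{X_\Sigma}] \cong [\mathbb{A}(\mathcal{E})/G]$, and $\Sigma_\mathcal{V}$ is the star subdivision of $\sigma$ at the Gorenstein element $(0,\dots,0,1) \in \operatorname{Int}(P)$, we have $\mathcal{X}_{\Sigma_\mathcal{V}} \cong [\mathbb{A}(\mathcal{E})/G]$. Next, applying Conjecture \ref{Thm:BdlParTiltNova} to $\mathcal{T}_G$ and $\mathcal{E}$ (after noting that the cohomology hypothesis for $\mathcal{T}$ propagates to $\mathcal{T}_G$ via $\mathcal{T}_G^\vee \otimes \mathcal{T}_G \cong (\mathcal{T}^\vee \otimes \mathcal{T})^{\oplus N}$ as ordinary sheaves on $X$, where $N$ depends only on the irreducible $G$-representations) produces a partial tilting complex $\pi^\ast \mathcal{T}_G$ on $[\mathbb{A}(\mathcal{E})/G] = \mathcal{X}_{\Sigma_\mathcal{V}}$ whose endomorphism algebra has finite global dimension. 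Finally, Corollary \ref{Cor:ParttiltCplxCanBdl} (which internally invokes Theorem \ref{Prop:Parttiltingworks}, with the regularity of the star-subdivision triangulation guaranteed by its projectivity) then yields the desired NCCR for $R = k[|\Sigma_\mathcal{V}|^\vee \cap M] = k[\sigma^\vee \cap M]$.

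The genuine obstacle is proving Conjecture \ref{Thm:BdlParTiltNova} itself. Its partial-tilting half follows by reading off the proof of Theorem \ref{Thm:NovaGeneralCplx} verbatim, since only the vanishing of higher $\Ext$ groups is needed — the generating step plays no role. The hard part is controlling the global dimension of
\begin{equation*}
\End_{[\mathbb{A}(\mathcal{E})/G]}(\pi^\ast \mathcal{T}_G) \;\cong\; \Bigl( \bigoplus_{l \geq 0} \Hom_X(\mathcal{T}_G, \mathcal{T}_G \otimes S^l(\mathcal{E})) \Bigr)^{G},
\end{equation*}
which is a graded, polynomial-like extension of $\End_{[X/G]}(\mathcal{T}_G)$ determined by the sheaf $\mathcal{E}$. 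A natural strategy is to work locally: on an affine cover of $X$ trivialising $\mathcal{E}$, this algebra restricts to a genuine polynomial extension $\End(\mathcal{T}_G|_U)[x_1,\dots,x_r]$ whose global dimension is bounded via the Hilbert syzygy theorem by $\gldim \End(\mathcal{T}_G|_U) + r$. The challenge is to glue such local bounds into a uniform global bound in the presence of the $G$-action and of the non-triviality of $\mathcal{E}$, which will likely require depth and reflexivity arguments in the spirit of those driving the proof of Theorem \ref{Prop:Parttiltingworks}, together with the observation that taking $G$-invariants preserves projectivity over a finite group (as used in Theorem \ref{Thm:parttiltNova4.1}).
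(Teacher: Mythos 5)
This statement is left as a \emph{conjecture} in the paper, so there is no proof of it for you to have matched. Your proposal correctly reconstructs the intended dependency chain: derive Conjecture~\ref{Cor:Nova+myresultpart} from Conjecture~\ref{Thm:BdlParTiltNova} plus Corollary~\ref{Cor:ParttiltCplxCanBdl}, exactly parallel to how Corollary~\ref{Cor:Nova+myresult} is obtained from Theorem~\ref{Thm:NovaGeneralCplx} plus Corollary~\ref{Cor:TiltCplxworks}. You also correctly observe that the partial-tilting half of Conjecture~\ref{Thm:BdlParTiltNova} is already implicit in the proof of Theorem~\ref{Thm:NovaGeneralCplx}, since only Ext-vanishing is used there. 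Your handling of the $\mathcal{T}$-versus-$\mathcal{T}_G$ discrepancy in the hypothesis (via $\mathcal{T}_G^\vee\otimes\mathcal{T}_G \cong (\mathcal{T}^\vee\otimes\mathcal{T})^{\oplus N}$ on $X$, by Schur's lemma) also matches the computation underlying Theorem~\ref{Thm:parttiltNova4.1}. So the reduction is sound, and you have honestly flagged where it stops.

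The gap is in your sketch for the global-dimension control in Conjecture~\ref{Thm:BdlParTiltNova}, and it is a real one. The algebra $\Lambda' := \End_{[\mathbb{A}(\mathcal{E})/G]}(\pi^\ast\mathcal{T}_G) \cong \bigl(\bigoplus_{l\ge 0}\Hom_X(\mathcal{T}_G,\mathcal{T}_G\otimes S^l(\mathcal{E}))\bigr)^G$ is a single $k$-algebra, not a sheaf of algebras over $X$, so ``restricting to an affine cover of $X$'' is not an operation one can perform on it directly. Even if one passes to the sheaf of algebras $\pi_\ast\mathcal{E}nd(\pi^\ast\mathcal{T}_G)$ on $X$, finite global dimension is \emph{not} a local property that glues over an open cover of the base: the global endomorphism ring is the ring of global sections, and global sections can have infinite global dimension even when all local restrictions have finite global dimension (this is precisely why the NCCR literature works so hard with MCM/reflexivity and depth conditions rather than open covers). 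Your Hilbert-syzygy heuristic, $\gldim(\Lambda[x_1,\dots,x_r]) = \gldim\Lambda + r$, applies on each trivialising open $U$, but there is no generality in which these local bounds assemble to a bound on $\gldim\Lambda'$. This is exactly the obstruction that makes Conjecture~\ref{Thm:BdlParTiltNova} genuinely open, and it is why the paper records both it and the present statement as conjectures rather than theorems. A plausible route the paper's framework suggests, but which you do not mention, is to avoid estimating $\gldim\Lambda'$ directly and instead embed $\pi^\ast\mathcal{T}_G$ fully faithfully into a smooth DM stack as in the proof of Theorem~\ref{Prop:Parttiltingworks}, where the finite-global-dimension hypothesis is imposed rather than proved; even so, one would then need to independently verify that hypothesis in the affine-bundle setting, which is the same open problem.
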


\noindent The results of Novakovi\'{c} require the variety $X$ to be smooth projective, but do not require the variety to be toric. In our case, we can substitute smoothness of the underlying toric variety by considering the toric DM stack associated to it. Similarly, projectivity is a stronger condition than necessary - properness suffices and so we should examine complete, simplicial fans.

\begin{theorem}
    \label{Thm:NovaDMstack}
     Let $\Sigma$ be a complete simplicial fan such that $\mathcal{X}_\Sigma$ has a tilting complex $\mathcal{T}$. Let $p:\tot V\rightarrow X_\Sigma$ be a toric vector bundle on $X_\Sigma$ with fan $\mathcal{V}$. Let $f_\Sigma:\mathcal{X}_\Sigma\rightarrow X_\Sigma$ be the good moduli space. If $H^i(\mathcal{X}_\Sigma, \mathcal{T}^\vee\otimes\mathcal{T}\otimes \operatorname{Sym}^\bullet(f_\Sigma^\ast V^\vee))=0$ for all $i\neq 0$, then there is a tilting complex on $\mathcal{X}_{\mathcal{V}}$. If $V=\omega_{X_{\Sigma}}$, the ring then $R=k[|\mathcal{V}|^\vee\cap M]$ has an NCCR.
\end{theorem}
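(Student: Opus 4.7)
The plan is to follow the architecture of Theorem \ref{Thm:NovaGeneralCplx} but carry the argument out intrinsically on DM stacks, and then invoke Corollary \ref{Cor:TiltCplxworks} for the NCCR conclusion. First I would establish the stack identification
\[
\mathcal{X}_\mathcal{V}\;\cong\;\spec_{\mathcal{X}_\Sigma}\!\bigl(\operatorname{Sym}^\bullet(f_\Sigma^\ast V^\vee)\bigr),
\]
producing an affine structure morphism $\pi:\mathcal{X}_\mathcal{V}\to\mathcal{X}_\Sigma$. This should follow from the Cox construction applied to $\mathcal{V}$: its rays split as $\Sigma(1)$ together with the rays encoding the bundle $V$, and the resulting GIT/quotient presentation factors through that of $\mathcal{X}_\Sigma$, realising $\mathcal{X}_\mathcal{V}$ as the stacky total space of the pullback bundle $f_\Sigma^\ast V$.

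Once $\pi$ is in place, set $\mathcal{T}':=\pi^\ast\mathcal{T}$; this is perfect because $\pi$ is affine and $\mathcal{T}$ is perfect. For the partial tilting property, I would combine adjunction with the projection formula $R\pi_\ast\pi^\ast\mathcal{F}\simeq\mathcal{F}\otimes\operatorname{Sym}^\bullet(f_\Sigma^\ast V^\vee)$ (valid since $\pi$ is affine, so $R\pi_\ast=\pi_\ast$ on quasi-coherent sheaves) and dualisability of $\mathcal{T}$:
\[
\Ext^i_{\mathcal{X}_\mathcal{V}}(\pi^\ast\mathcal{T},\pi^\ast\mathcal{T})\;\cong\;\Ext^i_{\mathcal{X}_\Sigma}\!\bigl(\mathcal{T},\mathcal{T}\otimes\operatorname{Sym}^\bullet(f_\Sigma^\ast V^\vee)\bigr)\;\cong\;H^i\!\bigl(\mathcal{X}_\Sigma,\mathcal{T}^\vee\otimes\mathcal{T}\otimes\operatorname{Sym}^\bullet(f_\Sigma^\ast V^\vee)\bigr),
\]
which vanishes for $i\neq 0$ by hypothesis. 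For generation, $\Rhom_{\mathcal{X}_\mathcal{V}}(\pi^\ast\mathcal{T},\mathcal{F})=0$ yields $\Rhom_{\mathcal{X}_\Sigma}(\mathcal{T},R\pi_\ast\mathcal{F})=0$ by adjunction; since $\mathcal{T}$ is tilting on $\mathcal{X}_\Sigma$ we get $R\pi_\ast\mathcal{F}=0$, and affineness of $\pi$ then forces $\mathcal{F}=0$. This proves the first assertion.

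For the NCCR statement I specialise to $V=\omega_{X_\Sigma}$. Then $|\mathcal{V}|$ is the Gorenstein cone $\cone(P\times\{1\})$ with $P=\conv\{u_\rho:\rho\in\Sigma(1)\}$, and $\mathcal{V}$ arises as the star subdivision, at the apex $(0,\dots,0,1)\in\operatorname{Int}(|\mathcal{V}|)$, of the coning of $\Sigma$. To apply Corollary \ref{Cor:TiltCplxworks} one must verify that $\mathcal{V}$ corresponds to a regular triangulation of $P$; this holds whenever $\Sigma$ itself is the normal fan of a polytope (for instance, when $X_\Sigma$ is projective), since a strictly convex support function on $\Sigma$ extends across the apex by any sufficiently large value at the interior point. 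Corollary \ref{Cor:TiltCplxworks} applied to the tilting complex $\mathcal{T}'$ produced above then delivers the NCCR $\End_{\mathcal{X}_\mathcal{V}}(\mathcal{T}')$ for $R=k[|\mathcal{V}|^\vee\cap M]$.

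The main obstacle I anticipate is the first step, i.e.\ the rigorous identification $\mathcal{X}_\mathcal{V}\cong\spec_{\mathcal{X}_\Sigma}\operatorname{Sym}^\bullet(f_\Sigma^\ast V^\vee)$. This demands a careful comparison of the two Cox presentations and of the characters of the acting tori for $\Sigma$ and for $\mathcal{V}$, together with verification that the good moduli space $f_\Sigma$ interacts correctly with the pullback of $V$. A secondary, more combinatorial concern is the regularity hypothesis required in the NCCR step: without an auxiliary projectivity assumption on $X_\Sigma$, regularity of the induced triangulation of $P$ need not be automatic and may have to be imposed separately before Corollary \ref{Cor:TiltCplxworks} is invoked.
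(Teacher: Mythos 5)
Your proposal follows essentially the same strategy as the paper's proof: pull $\mathcal{T}$ back along the affine bundle projection $\pi:\mathcal{X}_\mathcal{V}\to\mathcal{X}_\Sigma$, use adjunction together with the projection formula and dualisability to rewrite $\Ext^i_{\mathcal{X}_\mathcal{V}}(\pi^\ast\mathcal{T},\pi^\ast\mathcal{T})$ as $H^i\bigl(\mathcal{X}_\Sigma,\mathcal{T}^\vee\otimes\mathcal{T}\otimes\operatorname{Sym}^\bullet(f_\Sigma^\ast V^\vee)\bigr)$, and then obtain generation from $\Rhom(\mathcal{T},R\pi_\ast\mathcal{F})=0$ plus affineness of $\pi$. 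The one presentational difference from the paper is in establishing the key identity for $R\pi_\ast\O_{\mathcal{X}_\mathcal{V}}$: the paper works through the commutative square relating $\mathcal{X}_\mathcal{V},X_\mathcal{V},\mathcal{X}_\Sigma,X_\Sigma$ and the base-change identity $R\pi_\ast f_\mathcal{V}^\ast=f_\Sigma^\ast Rp_\ast$, whereas you propose to establish directly that $\mathcal{X}_\mathcal{V}\cong\spec_{\mathcal{X}_\Sigma}\operatorname{Sym}^\bullet(f_\Sigma^\ast V^\vee)$. These encode the same geometric fact, and either route is fine; the paper's route has the small advantage of reducing the cohomology computation to the coarse space $X_\Sigma$ when convenient.

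Where your proposal genuinely adds something is the last step. You correctly flag that invoking Corollary~\ref{Cor:TiltCplxworks} for the NCCR conclusion requires the triangulation of $P=\conv\{u_\rho:\rho\in\Sigma(1)\}$ induced by $\mathcal{V}$ (the star subdivision at the apex $(0,\dots,0,1)$) to be \emph{regular}, which is not automatic for an arbitrary complete simplicial fan $\Sigma$ but does hold when $\Sigma$ is polytopal: a strictly convex support function on $\Sigma$ restricts to a lifting function on $P$ exhibiting the required convex subdivision. The paper's proof does not address this point at all---it stops once $\pi^\ast\mathcal{T}$ is shown to be tilting and treats the NCCR claim as an immediate consequence---so the theorem as stated is slightly loose in omitting a projectivity (or polytopality) hypothesis on $\Sigma$. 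Your observation identifies this gap correctly; the fix is to add the hypothesis that $X_\Sigma$ is projective (as is done in the neighbouring Corollary~\ref{Cor:Nova+myresult}), which is the setting the authors clearly intend.
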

The proof of this Theorem works, in principle, quite similarly to the proof of Theorem \ref{Thm:NovaAffineBdlTiltBdl} as given in \cite{Nova18}. 
\begin{proof}
 We first note that if $\Sigma$ is simplicial, so is $\mathcal{V}$. Both the map of toric varieties $p:X_\mathcal{V}\rightarrow X_\Sigma$ and the map of the associated smooth DM stacks $\pi:\mathcal{X}_\mathcal{V} \rightarrow \mathcal{X}_\Sigma$ are induced by the projection of fans $\R^n\oplus \R\supseteq\mathcal{V}\rightarrow \Sigma\subseteq \R^n$. Denote by $f_\mathcal{V}:\mathcal{X}_\mathcal{V}\rightarrow X_\mathcal{V}$ the good moduli space. Then there is a commutative diagram:
\begin{figure}[ht!]
\begin{tikzcd}
   && \mathcal{X}_\mathcal{V}\arrow[r, "f_\mathcal{V}"]\arrow[d, "\pi"] & X_\mathcal{V}\arrow[d, "p"]\\
   && \mathcal{X}_\Sigma\arrow[r, "f_\Sigma"] & X_\Sigma\;
\end{tikzcd}
\end{figure}

Note that $L^j\pi^\ast=0$ for $j>0$ and consider $\pi^\ast \mathcal{T}$ on $\mathcal{X}_\mathcal{V}$. We compute the higher Ext-groups. 
\begin{align*}
    \Ext^i_{\mathcal{X}_\mathcal{V}}(\pi^\ast \mathcal{T},\pi^\ast\mathcal{T}) &= \Hom_{\mathcal{X}_\mathcal{V}}(\mathcal{T}, R\pi_\ast\pi^\ast\mathcal{T}[i])\\
    &=H^i(\mathcal{X}_\Sigma, \mathcal{T}^\vee\otimes R\pi_\ast\pi^\ast\mathcal{T})\\
    &=H^i(\mathcal{X}_\Sigma,\mathcal{T}^\vee\otimes \mathcal{T}\otimes R\pi_\ast\O_{\mathcal{X}_\mathcal{V}}) \text{ by projection formula}\\
    &=H^i(\mathcal{X}_\Sigma, \mathcal{T}^\vee\otimes\mathcal{T}\otimes R\pi_\ast f_\mathcal{V}^\ast\O_{X_\mathcal{V}})\\
    &= H^i(\mathcal{X}_\Sigma, \mathcal{T}^\vee\otimes\mathcal{T}\otimes f_\Sigma^\ast Rp_\ast\O_{X_\mathcal{V}})\\
    &  =H^i(\mathcal{X}_\Sigma, \mathcal{T}^\vee\otimes \mathcal{T}\otimes f_\Sigma^\ast \operatorname{Sym}^\bullet(V^\vee))\\
    & =H^i(\mathcal{X}_\Sigma, \mathcal{T}^\vee\otimes \mathcal{T}\otimes \operatorname{Sym}^\bullet(f_\Sigma^\ast V^\vee)).
\end{align*}
If one wishes to do so, the computation can be done on the underlying toric variety $X_\Sigma$ by noting $H^i(\mathcal{X}_\Sigma, \mathcal{T}^\vee\otimes \mathcal{T}\otimes \operatorname{Sym}^\bullet(f_\Sigma^\ast V^\vee))=H^i(X_\Sigma, f_{\Sigma, \ast}(\mathcal{T}^\vee\otimes\mathcal{T})\otimes \operatorname{Sym}(V^\vee))$. Doing so also allows us to copy the proof in \cite{Nova18} and of Theorem \ref{Thm:NovaGeneralCplx} to show that $\Rhom(\pi^\ast\mathcal{T},\mathcal{F})=0 \Rightarrow \mathcal{F}=0$.
\end{proof}

As above, in the case where we only have a partial tilting complex of finite global dimension, and not a tilting complex, we conjecture the following result to follow from Theorem \ref{Thm:NovaDMstack}.

\begin{conjecture}
    \label{Cor:NovaDMparttilt}
     Let $\Sigma$ be a complete, simplicial fan such that $\mathcal{X}_\Sigma$ has a partial tilting complex $\mathcal{T}$ with $\gldim\End(\mathcal{T})<\infty$. Let $\pi:\mathcal{V}\rightarrow X_\Sigma$ be a toric vector bundle on $X_\Sigma$ with fan $\Sigma_\mathcal{V}$. If $H^i(\mathcal{X}_\Sigma, \mathcal{T}^\vee\otimes\mathcal{T}\otimes \operatorname{Sym}^\bullet(\mathcal{E}^\vee))=0$ for all $i\neq 0$, then there is a partial tilting complex $\mathcal{T}'$ on $\mathcal{X}_{\Sigma_\mathcal{V}}$ such that $\gldim\End(\mathcal{T}')<\infty$. Thus, the ring $R=k[|\mathcal{V}|^\vee\cap M]$ admits an NCCR.
\end{conjecture}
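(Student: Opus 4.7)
The plan is to take the candidate partial tilting complex to be $\mathcal{T}':=\pi^\ast \mathcal{T}$, pulled back along $\pi:\mathcal{X}_{\Sigma_\mathcal{V}}\to \mathcal{X}_\Sigma$. Verifying the partial tilting property proceeds by an almost identical computation to that in the proof of Theorem \ref{Thm:NovaDMstack}: since $\pi$ is affine, hence flat, $L\pi^\ast = \pi^\ast$, and combining adjunction, the projection formula, and $R\pi_\ast \O_{\mathcal{X}_{\Sigma_\mathcal{V}}}\cong \operatorname{Sym}^\bullet(f_\Sigma^\ast \mathcal{E}^\vee)$ yields
\[
\Ext^i_{\mathcal{X}_{\Sigma_\mathcal{V}}}(\pi^\ast \mathcal{T},\pi^\ast \mathcal{T}) \cong H^i\bigl(\mathcal{X}_\Sigma,\, \mathcal{T}^\vee\otimes \mathcal{T} \otimes \operatorname{Sym}^\bullet(f_\Sigma^\ast \mathcal{E}^\vee)\bigr),
\]
which vanishes for $i\neq 0$ by hypothesis. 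The same identification exhibits $\Lambda':=\End_{\mathcal{X}_{\Sigma_\mathcal{V}}}(\pi^\ast \mathcal{T})$ as a nonnegatively graded algebra
\[
\Lambda' \cong \bigoplus_{l \geq 0} \Hom_{\mathcal{X}_\Sigma}\bigl(\mathcal{T},\mathcal{T}\otimes \operatorname{Sym}^l(f_\Sigma^\ast \mathcal{E}^\vee)\bigr)
\]
whose degree zero piece is $\Lambda = \End_{\mathcal{X}_\Sigma}(\mathcal{T})$.

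The crux of the proof, and the main obstacle, is upgrading $\gldim \Lambda<\infty$ to $\gldim \Lambda'<\infty$. The strategy is to regard $\Lambda'$ as a polynomial-type extension of $\Lambda$ of relative rank $r = \operatorname{rk}\mathcal{E}$. Locally on $\mathcal{X}_\Sigma$, where $\mathcal{E}^\vee$ trivialises, $\Lambda'$ reduces to $\Lambda\otimes_k k[t_1,\dots,t_r]$, and the Hilbert syzygy theorem gives $\gldim = \gldim \Lambda + r$. To globalise, one can exploit the $\mathbb{G}_m$-action on $\mathcal{X}_{\Sigma_\mathcal{V}}$ by fibrewise dilation, which induces the $\mathbb{N}$-grading on $\Lambda'$, and then invoke a Koszul-type resolution associated with the zero section $s:\mathcal{X}_\Sigma\hookrightarrow \mathcal{X}_{\Sigma_\mathcal{V}}$ (whose conormal bundle is precisely $\mathcal{E}^\vee$) to turn finite-length projective resolutions over $\Lambda$ into finite-length projective resolutions over $\Lambda'$. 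Alternatively, a Quillen-style filtration argument comparing $\Lambda'$ to its associated graded (the symmetric algebra of $\mathcal{E}^\vee$ with coefficients in $\Lambda$) could deliver the same bound. In either case, the cohomological vanishing hypothesis is what allows these resolutions to assemble globally rather than merely stalkwise.

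With $\pi^\ast \mathcal{T}$ partial tilting and $\gldim \Lambda'<\infty$ in hand, the NCCR conclusion follows by applying Theorem \ref{Prop:Parttiltingworks}, provided $|\Sigma_\mathcal{V}|$ is a Gorenstein cone and $\Sigma_\mathcal{V}$ corresponds to a regular triangulation of the associated base polytope. The former holds in the primary case of interest $\mathcal{E} = \omega_{X_\Sigma}$, matching the setup of Corollary \ref{Cor:ParttiltCplxCanBdl}; the latter holds because $\Sigma_\mathcal{V}$ is obtained from the regular triangulation underlying $\Sigma$ by an interior star subdivision, an operation preserving regularity.

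The main obstacle, as anticipated, is the second step. The local polynomial-extension heuristic is sharp, but globalising it on a (possibly non-affine) smooth DM stack demands careful tracking of the grading and of how projectivity transfers through $\pi_\ast$. A cleaner alternative would be to construct a fully faithful functor $\dbmod \Lambda' \hookrightarrow \dbcoh{\mathcal{X}_{\Sigma_\mathcal{V}}}$ landing in perfect complexes; smoothness of $\mathcal{X}_{\Sigma_\mathcal{V}}$ would then automatically force $\gldim \Lambda' < \infty$. Making either strategy precise is the substantive content of the conjecture.
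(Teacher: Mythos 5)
This statement is labeled a \emph{conjecture} in the paper (Conjecture~\ref{Cor:NovaDMparttilt}), and the paper does not prove it. The authors explicitly remark just before it: ``While the partial tilting property poses no issue, controlling the global dimension is more subtle, but we conjecture that the following holds.'' So there is no paper proof to compare against; I can only assess your proposal on its own terms.

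Your proposal correctly identifies the architecture: take $\mathcal{T}' = \pi^\ast\mathcal{T}$, verify partial tilting by the computation already carried out in Theorem~\ref{Thm:NovaDMstack}, identify $\Lambda'$ as the graded algebra $\bigoplus_{l\ge 0}\Hom_{\mathcal{X}_\Sigma}(\mathcal{T},\mathcal{T}\otimes\operatorname{Sym}^l(\cdot))$, and then attempt to deduce $\gldim\Lambda' < \infty$ from $\gldim\Lambda<\infty$. You are also honest that the second step is the genuine obstruction; in this you are in exact agreement with the paper's authors, which is to your credit.

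However, none of the three strategies you sketch for the global dimension step is actually an argument, and the first one is more flawed than you suggest. The claim that ``locally on $\mathcal{X}_\Sigma$ where $\mathcal{E}^\vee$ trivialises, $\Lambda'$ reduces to $\Lambda\otimes_k k[t_1,\dots,t_r]$'' is not meaningful: $\Lambda = \End_{\mathcal{X}_\Sigma}(\mathcal{T})$ is a global endomorphism algebra, computed from global sections of $\mathcal{H}\mathit{om}$'s, and does not localize on $\mathcal{X}_\Sigma$ in the naive way required for a Zariski-local reduction to the Hilbert syzygy theorem. There is no sheaf of algebras on $\mathcal{X}_\Sigma$ with global sections $\Lambda$ whose stalks are copies of $\Lambda$. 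The Koszul and Quillen-filtration alternatives you mention are plausible starting points, but they face the same obstruction (the graded pieces of $\Lambda'$ are not free over $\Lambda$, nor is there an obvious PBW-type structure), and you do not resolve it. The alternative at the end -- embedding $\operatorname{D}^{\mathrm{b}}(\operatorname{mod}\Lambda')$ into $\dbcoh{\mathcal{X}_{\Sigma_\mathcal{V}}}$ landing in perfect complexes -- is indeed the cleanest path, but the partial tilting property only gives an embedding of $\operatorname{Perf}(\Lambda')$ as a thick subcategory, and without essential surjectivity this does \emph{not} force $\operatorname{D}^{\mathrm{b}}(\operatorname{mod}\Lambda')$ to land in perfect objects, so $\gldim\Lambda'<\infty$ does not follow. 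This is precisely the subtlety that makes the paper leave the statement open. In short: your first step is correct, your second step names the right obstacle but does not overcome it, and the heuristics offered do not close the gap.
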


\subsection{NCCRs for some global quotient stacks}

Equipped with the results above, we wish to investigate how to generate new instances of NCCRs. In this part of the paper, we will consider a few more examples of cones that we are now able to construct NCCRs for, formulating one possible strategy on how to find an NCCR of a given Gorenstein cone $\sigma$ fulfilling some basic properties. However, before doing so we should point to the following result of Cox, Little and Schenck \cite{CLS}.

\begin{proposition}[=Proposition 3.3.7 in \cite{CLS}]
    \label{Prop:3.3.7CLS}
    Let $N'$ be a sublattice of finite index in $N$ and let $\Sigma$ be a fan in $N_\R=N'_\R$. let $G=N/N'$. Denote by $X_{\Sigma,N}$ and $X_{\Sigma,N'}$ the toric varieties defined by the fan $\Sigma$ when considered in $N_\R, N'_\R$ respectively. Then \[
    \phi:X_{\Sigma, N'}\rightarrow X_{\Sigma, N}
    \]
    induced by the inclusion $N'\hookrightarrow N$ presents $X_{\Sigma, N}$ as the quotient $X_{\Sigma, N'}/G$. 
\end{proposition}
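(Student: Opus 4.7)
The plan is to reduce to the affine case, work with monoid algebras, and then glue. For each $\sigma \in \Sigma$ one has the affine pieces
\[
U_{\sigma, N'} = \spec k[\sigma^\vee \cap M'], \qquad U_{\sigma, N} = \spec k[\sigma^\vee \cap M],
\]
where $M = \Hom(N, \Z)$ and $M' = \Hom(N', \Z)$. The inclusion $N' \hookrightarrow N$ of finite index dualises to an inclusion $M \hookrightarrow M'$ of finite index, with cokernel $M'/M \cong G^\vee := \Hom(G, \mathbb{G}_m)$ (which is non-canonically isomorphic to $G$ since $G$ is finite abelian). The map $\phi$ restricts on affines to the morphism $U_{\sigma, N'} \to U_{\sigma, N}$ induced by the inclusion of monoid algebras $k[\sigma^\vee \cap M] \hookrightarrow k[\sigma^\vee \cap M']$.

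First I would exhibit a $G^\vee$-grading on $k[\sigma^\vee \cap M']$ coming from the surjection $M' \twoheadrightarrow G^\vee$: each $m' \in \sigma^\vee \cap M'$ lies in exactly one coset of $M$, which determines its degree. The degree-$0$ part is precisely the span of those characters $\chi^{m'}$ with $m' \in M$, i.e.\ $k[(\sigma^\vee \cap M') \cap M]$. The key combinatorial identity to verify is
\[
(\sigma^\vee \cap M') \cap M \;=\; \sigma^\vee \cap M,
\]
which is immediate from $M \subset M'$. Equivalently, this grading corresponds to an action of $G^\vee$ on $U_{\sigma, N'}$ which, geometrically, is the restriction of the torus action along the inclusion $G^\vee \hookrightarrow T_{N'}$ obtained as the kernel of the surjective torus homomorphism $T_{N'} \twoheadrightarrow T_N$ dual to $N' \hookrightarrow N$.

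Second, I would show that $(k[\sigma^\vee \cap M'])^{G^\vee} = k[\sigma^\vee \cap M]$, hence taking $\spec$ yields $U_{\sigma, N} = U_{\sigma, N'}/G^\vee$. Since we are in characteristic $0$ and $G^\vee$ is finite abelian, the invariants are exactly the degree-$0$ summand of the grading, so this step reduces to the identity above. Identifying $G$ with $G^\vee$ then gives the statement $U_{\sigma, N} = U_{\sigma, N'}/G$.

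Finally I would globalise. The affine quotient maps $U_{\sigma, N'} \to U_{\sigma, N}$ are compatible with face inclusions (because the constructions are functorial in $\sigma$ and the $G^\vee$-action is induced from a single subgroup of $T_{N'}$), so they glue along the standard open covers $\{U_{\sigma, N'}\}_{\sigma \in \Sigma}$ and $\{U_{\sigma, N}\}_{\sigma \in \Sigma}$ of $X_{\Sigma, N'}$ and $X_{\Sigma, N}$ respectively; this produces the global quotient presentation. The main obstacle I expect is the combinatorial bookkeeping to make the $G^\vee$-grading and the identification of invariants with $k[\sigma^\vee \cap M]$ fully rigorous, together with checking equivariance of the gluing data so that the locally defined $G^\vee$-action assembles into a single action on $X_{\Sigma, N'}$ with quotient $X_{\Sigma, N}$.
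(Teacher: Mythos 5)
The paper does not prove this statement; it is cited verbatim as Proposition~3.3.7 of Cox--Little--Schenck \cite{CLS}, so there is no in-paper argument to compare against. Your proposal is essentially the standard textbook proof: reduce to the affine pieces $U_{\sigma}$, observe that $k[\sigma^\vee \cap M'] \supset k[\sigma^\vee \cap M]$ carries an $(M'/M)$-grading whose degree-zero part is the invariant subalgebra, so that $\spec$ gives the affine quotient, and then glue using that the acting finite subgroup of $T_{N'}$ is globally defined and the affine quotient maps are compatible with face inclusions. This matches CLS, and the argument is correct.

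One small imprecision worth fixing: the finite subgroup of $T_{N'}$ doing the acting is $\ker(T_{N'} \twoheadrightarrow T_N) = \Hom(M'/M,\mathbb{G}_m)$, and since $M'/M \cong \Ext^1(G,\Z) \cong \Hom(G,\Q/\Z)$ is the Pontryagin dual $G^\vee$, the kernel is $(G^\vee)^\vee$, which is \emph{canonically} isomorphic to $G$ by biduality for finite abelian groups. You label this kernel $G^\vee$ and then invoke a non-canonical identification $G \cong G^\vee$ at the end; while harmless here, the canonical identification with $G$ is cleaner and avoids any ambiguity. You might also state explicitly that, for a finite group acting on an affine $k$-variety in characteristic zero, $\spec(A^G)$ is a geometric quotient (orbits are finite hence closed), which is what the proposition asserts.
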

\noindent The map $\phi:X_{\Sigma, N'}\rightarrow X_{\Sigma, N}$ is in fact a geometric quotient, and so we have $\mathcal{X}_{\Sigma, N}\cong [X_{\Sigma,N'}/G]$. 

\begin{example}
\label{Exa:TotHirz}

Consider the Hirzebruch surface $\mathcal{H}_3$, which is a toric variety whose fan $\Sigma$ has rays $(1,0), (0,-1), (-1,3), (0,1)$ and two-dimensional cones defined by adjacent rays. Note that the canonical bundle over $\mathcal{H}_3$ has a fan $\Sigma'$ with rays \[(1,0,1), (0,-1,1), (-1,3,1), (0,1,1), (0,0,1)\] and maximal cones obtained by lifting cones from $\Sigma$. Denote by $\sigma$ the support of the fan $\Sigma'$. The toric variety $\tot \omega_{\mathcal{H}_3}$ is Gorenstein and one observes, using Corollary \ref{Cor:VGITforGor}, that the associated stack $[\tot \omega_{\mathcal{H}_3}]$ is derived equivalent to $[\tot \omega_{W\P(1,1,3)}]$.

To continue, we consider a different fan for $W\P(1,1,3)$, namely the standard fan for $\P^2$ in a different lattice. Let $N'$ be the lattice generated by the three vectors $v_0=(-\frac{1}{3},-\frac{1}{3}), v_1=(1,0)$ and $v_2=(0,1)$, so $N'$ is a refinement of $N$. Then the complete simplicial fan $\Sigma_W$ with ray generators $v_0, v_1, v_2$ has associated toric variety $X_{\Sigma_W}\cong W\P(1,1,3)$ (see also Proposition 1.15 in \cite{RT11}). Note that, viewed in the original lattice $N$, the fan $\Sigma_W$ is simply the standard fan of $\P^2$. 
Using Proposition \ref{Prop:3.3.7CLS}, the inclusion of lattices $N\hookrightarrow N'$ then gives $W\P(1,1,3)\cong \P^2/G$ where $G=N'/N\cong (\Z/3\Z)$. Hence, considering the associated toric stacks, we have $[W\P(1,1,3)]\cong[\P^2/(\Z/3\Z)]$. Construct the fan for $[\tot \omega_{W\P(1,1,3)}]$ in $(N'\oplus\Z)_{\R}$ via Proposition 7.3.1 in \cite{CLS}. Introducing the vector $e_3$ as generators of the $\Z$ component in $N'\oplus \Z$, the resulting fan has ray generators $v_0+e_3, v_1+e_3, v_2+e_3, e_3$. Viewed in $N$, these rays have primitive generators $(1,0,1), (0,1,1), (-1,-1,3), (0,0,1)$. The fan can be recognized as the fan for $\tot \O_{\P^2}(-5)$, and so $[\tot\omega_{W\P(1,1,3)}]\cong [\tot \O_{\P^2}(-5)/(\Z/3\Z)]$ via Proposition \ref{Prop:3.3.7CLS}. Write $\tot \O_{\P^2}(-5)$ as $\operatorname{Sym}(\O_{\P^2}(5))=\mathbb{A}(\O_{\P^2}(5))$. Finally, we note that toric vector bundles are torus-equivariant, and as direct sum of such bundles, the tilting bundle on $\P^2$ is torus-equivariant, as is $\tot \O_{\P^2}(-5)$. The group $N'/N\cong (\Z/3\Z)$ is obtained as kernel of the map of tori $T_N\rightarrow T_{N'}$, and so is a subgroup, hence both bundle naturally carry a $(\Z/3\Z)$-equivariant structure, allowing us to apply Theorem \ref{Thm:Nova4.1} to obtain a tilting bundle $\mathcal{T}_G$ on $[\P^2/(\Z/3\Z)]$.
We check that for $i>0$, $H^i(\P^2,\mathcal{T}_G^\vee\otimes\mathcal{T}_G\otimes S^\bullet(\O_{\P^2}(5)))=0$, and thus obtain a tilting object on $[\tot\O_{\P^2}(-5)/(N'/N)]$ by Theorem \ref{Thm:NovaAffineBdlTiltBdl}. Hence, by Corollary \ref{Cor:TiltCplxworks} we obtain an NCCR of $R=k[\sigma^\vee\cap M]$. 
\end{example}

The Example \ref{Exa:TotHirz} is simplicial, and has thus already been shown to have NCCRs in work by Faber-Muller-Smith \cite{FMS19} (and, equivalently and more recently, Ballard et al. \cite{BBB+}). Our result also provides a way to explicitly generate NCCRs for the case of canonical bundles over weighted projective spaces in general.

\begin{lemma}
    \label{Lem:WPNCCR}
    Let $\Sigma$ be a fan such that $X_\Sigma$ is a weighted projective space of dimension $n$. Then there is a finite abelian group $G$ such that $\mathcal{X}_\Sigma\cong[\P^n/G]$ and a $G$-equivariant bundle $\mathcal{E}$ on $\P^n$ such that $[\tot\omega_{X_\Sigma}]\cong [\mathbb{A}(\mathcal{E})/G]$.
    Furthermore, there is a tilting object on $[\mathbb{A}(\mathcal{E})/G]$, thus yielding an NCCR of $R=k[\cone(P\times\{1\})^\vee\cap M]$, where $P$ is the convex hull of the primitive generators of $\Sigma(1)$.
\end{lemma}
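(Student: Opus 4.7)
The plan is to generalise Example \ref{Exa:TotHirz} from $W\P(1,1,3)$ to an arbitrary weighted projective space. Let $u_0, \ldots, u_n \in N$ denote the primitive ray generators of $\Sigma$, satisfying the weight relation $\sum_{i=0}^n a_i u_i = 0$. First I would introduce the sublattice $\tilde N := \sum_i \Z\, a_i u_i \subseteq N$; since the $u_i$ span $N_{\R}$ with a single relation, any $n$ of the $a_i u_i$ are linearly independent, so they form a basis of $\tilde N$, and one checks directly that each $a_i u_i$ is primitive in $\tilde N$. Viewed in $\tilde N$, the fan $\Sigma$ therefore becomes the standard fan of $\P^n$, so Proposition \ref{Prop:3.3.7CLS} yields $X_\Sigma \cong \P^n/G$ with $G := N/\tilde N$ a finite abelian group, and passing to Cox stacks gives $\mathcal{X}_\Sigma \cong [\P^n/G]$.

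Second, I would identify the canonical-bundle stack. The fan $\Sigma_{\mathcal{V}}$ of $\tot \omega_{X_\Sigma}$ is the star subdivision of $\sigma$ at $(0,1)$, with rays $\{(u_i, 1)\}_{i=0}^n \cup \{(0, 1)\}$ in $N \oplus \Z$; in the sublattice $\tilde N \oplus \Z$ the primitive generators along those rays become $(a_i u_i, a_i)$ and $(0, 1)$. Setting $d := \sum_i a_i$ and $\mathcal{E} := \O_{\P^n}(d)$, an explicit change of basis (sending the basis $\tilde e_i := a_i u_i$ of $\tilde N$ to $f_i := \tilde e_i + a_i e_{n+1}$ for $i \le n$) transforms these rays into $(e_1, 0), \ldots, (e_n, 0), (-\sum_j e_j, d), (0, 1)$, i.e.\ the standard fan of $\tot \O_{\P^n}(-d) = \mathbb{A}(\mathcal{E})$. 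Applying Proposition \ref{Prop:3.3.7CLS} to the pair $\tilde N \oplus \Z \subset N \oplus \Z$ then yields $[\tot \omega_{X_\Sigma}] \cong [\mathbb{A}(\mathcal{E})/G]$, establishing the first two assertions of the Lemma.

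Third, I would produce the tilting object. Because $G$ arises as the kernel of the surjection of tori $T_{\tilde N} \twoheadrightarrow T_N$ dual to $\tilde N \hookrightarrow N$, it sits inside the big torus of $\P^n$, so every torus-equivariant sheaf on $\P^n$ carries a canonical $G$-equivariant structure; in particular Beilinson's tilting bundle $\mathcal{T}_{\P^n} := \bigoplus_{i=0}^n \O_{\P^n}(i)$ and $\mathcal{E} = \O_{\P^n}(d)$ are $G$-equivariant. Theorem \ref{Thm:Nova4.1} then supplies a tilting bundle $\mathcal{T}_G := \bigoplus_j \mathcal{T}_{\P^n} \otimes W_j$ on $[\P^n/G]$, where the $W_j$ range over the irreducible representations of $G$. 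The hypothesis of Theorem \ref{Thm:NovaAffineBdlTiltBdl} requires $H^i(\P^n, \mathcal{T}_G^\vee \otimes \mathcal{T}_G \otimes \operatorname{Sym}^l \mathcal{E}) = 0$ for all $i \neq 0$ and $l \geq 1$; since $\mathcal{T}_G^\vee \otimes \mathcal{T}_G$ is a direct sum of line bundles of degrees in $\{-n, \ldots, n\}$ and $\operatorname{Sym}^l \O_{\P^n}(d) = \O_{\P^n}(ld)$, this reduces to the Serre vanishing $H^i(\P^n, \O_{\P^n}(k)) = 0$ for $i > 0$ and $k \geq d - n \geq 1$, which is immediate.

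Finally, Theorem \ref{Thm:NovaAffineBdlTiltBdl} produces a tilting bundle $\pi^\ast \mathcal{T}_G$ on $\mathcal{X}_{\Sigma_{\mathcal{V}}} \cong [\mathbb{A}(\mathcal{E})/G]$. Since the star subdivision of $\sigma$ at the interior point $(0,1)$ corresponds to the star triangulation of the simplex $P = \conv(u_0, \ldots, u_n)$, which is manifestly regular (lift $0$ to positive height and the vertices to height $0$), Corollary \ref{Cor:TiltCplxworks} applies and identifies $\End_{\mathcal{X}_{\Sigma_{\mathcal{V}}}}(\pi^\ast \mathcal{T}_G)$ as an NCCR of $R = k[\sigma^\vee \cap M]$. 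The main obstacle I anticipate is the second step --- selecting $\tilde N$ correctly and verifying by explicit change of basis that the lifted fan in $\tilde N \oplus \Z$ really is the standard fan of $\tot \O_{\P^n}(-\sum_i a_i)$; once this identification is secured, the remainder of the argument is bookkeeping combining Proposition \ref{Prop:3.3.7CLS}, Theorems \ref{Thm:Nova4.1} and \ref{Thm:NovaAffineBdlTiltBdl}, and Corollary \ref{Cor:TiltCplxworks}.
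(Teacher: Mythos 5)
Your proposal is correct and follows essentially the same route as the paper: both pass to a finite-index sublattice in which $\Sigma$ becomes the standard fan of $\P^n$ (you call it $\tilde N \subset N$; the paper labels the ambient lattice $N'$ and the sublattice $N$, but the roles are identical), identify the canonical-bundle stack as $[\mathbb{A}(\O_{\P^n}(\sum a_i))/G]$ via Proposition~\ref{Prop:3.3.7CLS}, and then combine Theorems~\ref{Thm:Nova4.1} and~\ref{Thm:NovaAffineBdlTiltBdl} with Corollary~\ref{Cor:TiltCplxworks}. You spell out the cohomology-vanishing estimate and the regularity of the star triangulation more explicitly than the paper (which merely asserts ``one verifies''), but the mathematical content is the same.
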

\begin{proof}
    Let $X_\Sigma=W\P(a_0,\dots,a_n)$. and let $e_1,\dots, e_n$ be the standard basis of the lattice $N\cong \Z^n$. Consider the following $n+1$ vectors in $N\otimes_{\Z}\Q$\[
    v_0=-\frac{1}{a_0}\sum_{i=1}^n e_i, \quad v_i=\frac{e_i}{a_i}.
    \]
    Let $N'$ be the lattice generated by these rational vectors $v_0,\dots, v_n$ and let $\Sigma$ be the fan whose rays have primitive generators $v_i$ and whose maximal cones are spanned by all maximal proper subsets of $\{v_0,\dots,v_n\}$. Note in particular that $N$ is a sublattice of $N'$ of index $N'/N=\prod_{i=0}^n(\Z/a_i\Z)$. Proposition 1.15 in \cite{RT11} shows that the toric variety associated to $\Sigma$ with respect to the lattice $N'$ is isomorphic to $W\P(a_0,\dots,a_n)$. With respect to the lattice $N'$, however, the variety is simply $\P^n$ and so, by Proposition \ref{Prop:3.3.7CLS} we have $W\P(a_0,\dots,a_n)\simeq \P^n/(N'/N)$ and so the statement for the stacks follows. The index $N'$ of the sublattice is $\prod_{i=1}^n a_i$, and $N/N'\cong \prod_{i=0}^n(\Z/a_i\Z)$. When constructing the total space of the canonical bundle of $W\P(a_0,\dots,a_n)$, we use the construction in $\S7$ of\cite{CLS}. The fan for the toric vector bundle sits inside $(N'\oplus\Z)_{\R}$, with $e_{n+1}$ the generator of the $\Z$ component, and has primitive ray generators $v_0+e_{n+1},\dots, v_n+e_{n+1}, e_{n+1}$ and maximal cones arise via the star subdivision on $e_{n+1}$. In the lattice $N$, this fan has primitive generators $-\sum_{i=1}^n e_i+a_0e_{n+1}, e_1+a_1e_{n+1},\dots, e_n+a_ne_{n+1}, e_{n+1}$. The maximal cones arise from the subdivision on $e_{n+1}$ and so we recognise this as the toric vector bundle $\tot \O_{\P^n}(-\sum_{i=0}^n a_iD_i)$, where $D_i$ is the torus-invariant Weil divisor associated to the ray spanned by $-\sum e_i$ for $i=0$ and the ray spanned by $e_i$ for $i>0$. The Proposition \ref{Prop:3.3.7CLS} now gives \[
    [\tot \omega_{W\P(a_0,\dots,a_n)}]\cong \left[\tot \O_{\P^n}\left(-\sum_{i=0}^n a_iD_i\right)/(N'/N)\right]=\left[\mathbb{A}\left(\O_{\P^n}\left(\sum_{i=0}^na_iD_i\right)\right)/(N'/N)\right].
    \]
    By the same argument as before, the toric vector bundles admit an equivariant structure with respect to groups arising as subgroups of the torus $T_N$, and so we can apply Theorem \ref{Thm:Nova4.1} to obtain a tilting object $\mathcal{T}'$ on $[\P^n/(N'/N)]$. Furthermore, one verifies that $H^i(\P^n, \mathcal{T}'^\vee\otimes \mathcal{T}'\otimes S^l(\O_{\P^n}(\sum_{j=0}^n a_jD_j)))=0$ for $i\neq 0, l>0$ (as $a_j\geq 1$ for all $j$).
\end{proof}

\noindent The proof is stronger than the case of weighted projective spaces, and can treat so called \newterm{fake weighted projective spaces}, as will be demonstrated in the next result.
\begin{lemma}
\label{Lem:Fakeweighted}
Let $\Sigma$ be a complete, simplicial fan with $|\Sigma(1)|=\dim X_\Sigma+1$. Then there is a finite abelian group $G$ such that $\mathcal{X}_\Sigma\cong[\P^n/G]$ and a $G$-equivariant bundle $\mathcal{E}=\O_{\P^n}(\sum b_iD_i)$ on $\P^n$ with $a_i>0$ such that $[\tot\omega_{X_\Sigma}]\cong [\mathbb{A}(\mathcal{E})/G]$ (here, $D_i$ are the standard torus-invariant Weil divisors associated to the rays of the fan of $\P^n$).
\end{lemma}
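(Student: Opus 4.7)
The plan is to mimic the strategy of Lemma \ref{Lem:WPNCCR}, this time working with an abstract complete simplicial fan whose rays satisfy a single, not necessarily standard, linear dependence. Write $v_0,\dots,v_n$ for the primitive generators of $\Sigma(1)$. Since every maximal cone of $\Sigma$ is simplicial and $n$-dimensional, any $n$ of the $v_i$ form a $\Q$-basis of $N_\R$, so there is a unique (up to scalar) relation $\sum_{i=0}^n a_iv_i=0$, which we normalise to have $a_i\in\Z_{>0}$ and $\gcd(a_0,\dots,a_n)=1$. This already identifies $X_\Sigma$ combinatorially as a fake weighted projective space.

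Now set $N'':=\Z\langle a_1v_1,\dots,a_nv_n\rangle\subseteq N$; by the relation, $a_0v_0\in N''$ as well. A short calculation shows that the primitive of $\cone(v_i)$ in $N''$ is exactly $a_iv_i$: if $m v_i\in N''$ with $m>0$, then expanding $mv_i$ in the basis $\{a_jv_j\}_{j=1}^n$ and using $\Q$-linear independence of the $v_j$ forces $a_i\mid m$. Writing $e_j':=a_jv_j$ for $j\geq 1$, the primitive $a_0v_0$ reads $-\sum_{j=1}^n e_j'$, so in $N''$ the fan $\Sigma$ is precisely the standard fan of $\P^n$. Proposition \ref{Prop:3.3.7CLS} applied to the inclusion $N''\hookrightarrow N$ then yields $X_\Sigma\cong \P^n/G$ for the finite abelian group $G:=N/N''$, and the same argument at the level of Cox data gives $\mathcal{X}_\Sigma\cong[\P^n/G]$, as in Lemma \ref{Lem:WPNCCR}.

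I then run the same argument one dimension up. The fan $\Sigma_{\mathcal{V}}$ of $\tot\omega_{X_\Sigma}$ in $N\oplus\Z$ has primitive rays $(v_i,1)$ for $i=0,\dots,n$ together with $e_{n+1}=(0,1)$, and cones obtained by star subdivision of $\cone((v_i,1))_i$ at $e_{n+1}$. Viewing this same combinatorial fan in the sublattice $N''\oplus\Z\subseteq N\oplus\Z$, the primitive of $\cone((v_i,1))$ becomes $(a_iv_i,a_i)=a_i(v_i,1)$ by the divisibility argument above, while $e_{n+1}$ stays primitive. In the basis $(e_1',\dots,e_n',e_{n+1})$ of $N''\oplus\Z$, the rays therefore read $(e_i',a_i)$ for $i\geq 1$, $(-\sum_j e_j',a_0)$ for $i=0$, and $(0,1)$. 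By the construction of total spaces of toric line bundles in CLS \S 7.3, this is exactly the fan of $\tot\O_{\P^n}(-\sum_{i=0}^n a_iD_i)=\mathbb{A}(\O_{\P^n}(\sum_{i=0}^n a_iD_i))$, and a second application of Proposition \ref{Prop:3.3.7CLS} to $N''\oplus\Z\hookrightarrow N\oplus\Z$ (whose quotient is still $G$, since the $\Z$-summand is shared) yields $[\tot\omega_{X_\Sigma}]\cong[\mathbb{A}(\mathcal{E})/G]$ with $\mathcal{E}=\O_{\P^n}(\sum_{i=0}^n a_iD_i)$, so the coefficients in the statement are $b_i=a_i>0$. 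The $G$-equivariant structure on $\mathcal{E}$ is the canonical one inherited from the fact that $G$ is a subgroup of the torus $T_{N''}$ acting on $\P^n$.

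The main technical step, and the only potential obstacle, is verifying the identification of the fan in $N''\oplus\Z$ with the fan of $\mathbb{A}(\O_{\P^n}(\sum a_iD_i))$: one must be careful with the sign convention that CLS \S 7.3 uses to assign lattice heights to torus-invariant divisors of a line bundle, and with the labelling of the rays of $\P^n$ with the divisors $D_i$. Once this dictionary is fixed, both the recognition $\mathcal{X}_\Sigma\cong[\P^n/G]$ and the line-bundle identification drop out uniformly from the single relation $\sum a_iv_i=0$, and positivity of the $b_i$ is simply the positivity of the $a_i$.
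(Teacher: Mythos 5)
Your proof is correct and follows essentially the same route as the paper's: normalise the unique linear relation $\sum a_i v_i=0$ (the paper phrases this via $\lambda_0 v_0=-\sum\lambda_i v_i$ and uses $|\lambda_i|$), pass to the sublattice generated by the scaled rays where the fan becomes the standard fan of $\P^n$, apply Proposition \ref{Prop:3.3.7CLS}, and then run the same construction in $N\oplus\Z$ to identify the canonical bundle with $\mathbb{A}(\O_{\P^n}(\sum a_iD_i))$. Your write-up is more careful than the paper's on two points that the paper leaves implicit: that completeness forces all $a_i$ to have the same sign (so the normalisation $a_i>0$ is legitimate), and the explicit divisibility argument showing that $a_iv_i$ is the primitive generator of $\cone(v_i)$ in $N''$.
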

\begin{proof}
    We proceed similar to the proof of Lemma \ref{Lem:WPNCCR}. Let $n=\dim X_\Sigma$ and write $v_0,\dots,v_n$ for the $n+1$ primitive generators of the rays in $\Sigma(1)$. These are linearly dependent over $\Q$ and so there are $\lambda_i\in \Z$ such that $\lambda_0 v_0=-\sum \lambda_i v_i$. Note that any subset of $n$ of the $v_i$ is linearly independent, otherwise the fan $\Sigma$ would not be complete. Then we note that in $N'=\Z\langle|\lambda_i|v_i\rangle\subset N$ each cone is smooth as any maximal cone has $n$ rays which form a basis of $N'$. In the lattice $N'$, the fan takes the shape of the standard fan for $\P^n$, and so we can finish the proof as in the previous Lemma \ref{Lem:WPNCCR}.
\end{proof}

\noindent We immediately obtain the following Corollary.

\begin{corollary}
   \label{Cor:SimplicialReproof}
   Let $\sigma=\cone(P\times\{1\})$ be a simplicial cone where $P$ is a translate of a polytope $Q$ with primitive vertices and $0\in \operatorname{Int}(Q)$. Then $R=k[\sigma^\vee\cap M]$ has an NCCR. 
\end{corollary}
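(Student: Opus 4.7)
The plan is to reduce Corollary \ref{Cor:SimplicialReproof} to Lemma \ref{Lem:Fakeweighted} together with the already-established tilting machinery. First, using Lemma \ref{Lem:TranslationOfCone}, I would replace $P$ by its translate $Q$ so that we may assume $0 \in \operatorname{Int}(P)$ and that $P$ has primitive vertices. Simpliciality of $\sigma = \cone(P \times \{1\})$ forces $P$ itself to be an $n$-simplex (with $n = \dim N_{\R}$), since the rays of $\sigma$ are exactly the rays through its vertices.

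Next I would star-subdivide $\sigma$ at the ray spanned by the Gorenstein element $(0,\ldots,0,1)$, which lies in $\operatorname{Int}(\sigma)$ by the interior-point hypothesis. This produces a complete simplicial fan $\mathcal{V}$ refining $\sigma$, with $n+2$ rays and whose maximal cones join each facet of $\sigma$ to the new ray. By the construction of \cite{CLS}, Proposition 7.3.1, the fan $\mathcal{V}$ is precisely the fan of $\tot \omega_{X_\Sigma}$, where $\Sigma$ is the complete simplicial fan in $N_{\R}$ whose rays are generated by the vertices of $P$. Since $|\Sigma(1)| = n+1 = \dim X_\Sigma + 1$, Lemma \ref{Lem:Fakeweighted} applies, producing a finite abelian group $G$, an isomorphism $\mathcal{X}_\Sigma \cong [\P^n/G]$, and a $G$-equivariant line bundle $\mathcal{E} = \mathcal{O}_{\P^n}(\sum b_i D_i)$ with all $b_i > 0$ such that $\mathcal{X}_{\mathcal{V}} = [\tot \omega_{X_\Sigma}] \cong [\mathbb{A}(\mathcal{E})/G]$.

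From here I would apply Theorem \ref{Thm:Nova4.1} to the standard Beilinson tilting bundle $\mathcal{O}_{\P^n} \oplus \cdots \oplus \mathcal{O}_{\P^n}(n)$ to obtain a tilting bundle $\mathcal{T}_G$ on $[\P^n/G]$, and then Theorem \ref{Thm:NovaAffineBdlTiltBdl} to pull it back to a tilting bundle on $\mathcal{X}_{\mathcal{V}} \cong [\mathbb{A}(\mathcal{E})/G]$. The hypothesis $H^i(\P^n, \mathcal{T}_G^\vee \otimes \mathcal{T}_G \otimes S^l(\mathcal{E})) = 0$ for $i \neq 0$ and $l > 0$ follows from the positivity of $\mathcal{E}$ exactly as in Lemma \ref{Lem:WPNCCR}: every summand of $\mathcal{T}_G^\vee \otimes \mathcal{T}_G$ is of the form $\mathcal{O}_{\P^n}(d)$ with $|d| \le n$, so twisting by $S^l(\mathcal{E})$ for $l \ge 1$ yields sheaves of the form $\mathcal{O}_{\P^n}(e)$ with $e \ge 1$, whose higher cohomology vanishes.

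The last step is to invoke Corollary \ref{Cor:TiltCplxworks}, for which I must verify that the triangulation of $P$ corresponding to the refining fan $\mathcal{V}$ is regular. That triangulation is the star triangulation of $P$ at the interior point $0$, and it is regular: the piecewise-linear height function assigning $-1$ to $0$ and $0$ to each vertex of $P$ is strictly convex on the star triangulation, exhibiting it as its lower envelope. Corollary \ref{Cor:TiltCplxworks} then produces the desired NCCR $\End_{\mathcal{X}_{\mathcal{V}}}(\pi^\ast \mathcal{T}_G)$ for $R = k[\sigma^\vee \cap M]$. I expect the main subtlety to be the bookkeeping in the star-subdivision step — namely checking that the fan $\mathcal{V}$ really coincides with the fan for $\tot \omega_{X_\Sigma}$ — but this is essentially forced by the interior-point assumption and the Gorenstein normalisation $\langle \mathfrak{m}_\sigma, u_\rho \rangle = 1$ for every ray of $\sigma$.
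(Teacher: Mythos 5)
Your proof is correct and follows essentially the same route as the paper: translate so $0\in\operatorname{Int}(P)$ via Lemma \ref{Lem:TranslationOfCone}, identify $\mathcal{X}_\Sigma$ (for $\Sigma$ the face fan) with $[\P^n/G]$ and $[\tot\omega_{X_\Sigma}]$ with $[\mathbb{A}(\mathcal{E})/G]$ via Lemma \ref{Lem:Fakeweighted}, equip the Beilinson tilting bundle with a $G$-equivariant structure and apply Theorem \ref{Thm:Nova4.1}, check the cohomology vanishing, pull back via Theorem \ref{Thm:NovaGeneralCplx}/\ref{Thm:NovaAffineBdlTiltBdl}, and conclude with Corollary \ref{Cor:TiltCplxworks}. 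The only real difference is that you explicitly verify regularity of the star triangulation of $P$ at $0$ — a useful detail that the paper leaves implicit — but the overall argument is the same.
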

\begin{proof}
    By Lemma \ref{Lem:TranslationOfCone}, we may without loss of generality assume $\sigma=\cone(Q\times\{1\})$. The face fan of $Q$, $\Sigma$, is simplicial with primitive generators defined by the vertices of $Q$. Then Lemma \ref{Lem:Fakeweighted} gives us the isomorphism $\mathcal{X}_\Sigma\cong[\P^n/G]$ for a finite group $G$. The $G$-equivariant structure of the standard tilting bundle $\mathcal{T}$ on $\P^n$ allows us to apply Theorem \ref{Thm:Nova4.1},  giving a tilting bundle $\mathcal{T}_G=\mathcal{T}\otimes \bigoplus \O_{\P^n}\otimes W_j$ on $\mathcal{X}_\Sigma$ for $W_j$ the irreps of $G$. Further, the lemma also gives
    $[\tot\omega_{X_\Sigma}]\cong [\mathbb{A}(\O_{\P^n}(\sum b_iD_i))/G]$ for some $b_i\ge 1$. This is a very ample line bundle and we check the vanishing condition $H^i(\P^n,\mathcal{T}_G^\vee\otimes \mathcal{T}_G\otimes \O_{\P^n}(l\cdot\sum b_iD_i))=0$.
    Thus, the Theorem \ref{Thm:NovaGeneralCplx} applies and we get an NCCR for $R=k[\sigma^\vee\cap M]$ as required.
\end{proof}
\begin{remark}
    This result partially reproves that simplicial toric algebras admit NCCRs, see e.g. \cite{FMS19}. Our Corollary is weaker in that we assume an interior point to the polytope $P$ and primitivity of the vertices when translating the polytope to make this interior point the origin. 
\end{remark}

\noindent At this point, one could hope that there always exists some lattice $N'$ in which a given simplicial fan $\Sigma$ is smooth and that we can proceed similarly to above. Unfortunately, this is not always possible.  In case we do find such a lattice, however, we obtain the following result.

\begin{proposition}
    \label{Prop:IfLattice}
    Let $\Sigma$ be a simplicial fan of a projective toric variety $X_{\Sigma,N}$ in $N_{\R}$ and suppose there is a finite-index sublattice $N'\subset N$ such that $\Sigma$ is smooth with respect to $N'$. Then there exists a $(N/N')$-equivariant bundle $\mathcal{E}$ on $X_{\Sigma,N'}$ (the smooth projective variety of $\Sigma$ with respect to $N'$) such that $[\tot \omega_{X_{\Sigma,N}}]=[\mathbb{A}(\mathcal{E})/(N/N')]$.
\end{proposition}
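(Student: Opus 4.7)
The plan is to mirror the explicit lattice-changing construction already carried out for (fake) weighted projective spaces in Lemmas \ref{Lem:WPNCCR} and \ref{Lem:Fakeweighted}. Set $G := N/N'$, a finite abelian group. First I would invoke Proposition \ref{Prop:3.3.7CLS} applied to the inclusion $N'\hookrightarrow N$ to obtain a geometric quotient presentation $X_{\Sigma,N}\cong X_{\Sigma,N'}/G$, which lifts to the stack isomorphism $\mathcal{X}_{\Sigma,N}\cong[X_{\Sigma,N'}/G]$.

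Next I would construct the fan of $\tot\omega_{X_{\Sigma,N}}$ inside $(N\oplus\Z)_\R$ using the total-space recipe of $\S 7.3$ of \cite{CLS}. For each $\rho\in\Sigma(1)$, denote by $v_\rho$ and $v'_\rho$ the primitive generators of $\rho$ in $N$ and $N'$ respectively, and let $\lambda_\rho\ge 1$ be the unique integer with $v'_\rho=\lambda_\rho v_\rho$. The ray generators of the fan of $\tot\omega_{X_{\Sigma,N}}$ are then $(v_\rho,1)$ for $\rho\in\Sigma(1)$ together with $(0,\dots,0,1)$, and the maximal cones arise via star-subdivision on the last ray. The sublattice $N'\oplus\Z\subseteq N\oplus\Z$ has finite index with cokernel $G$, and regarded inside $N'\oplus\Z$ the primitive ray generators become $(v'_\rho,\lambda_\rho)=\lambda_\rho(v_\rho,1)$ together with $(0,\dots,0,1)$. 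Matching against the recipe for total spaces of toric line bundles, this is exactly the fan of $\tot\mathcal{E}^\vee$ with
\[
\mathcal{E} \;=\; \O_{X_{\Sigma,N'}}\!\Bigl(\sum_{\rho\in\Sigma(1)}\lambda_\rho D'_\rho\Bigr),
\]
where $D'_\rho$ is the torus-invariant Weil divisor of $\rho$ on $X_{\Sigma,N'}$. The hypothesis that $\Sigma$ is smooth with respect to $N'$ ensures that each $D'_\rho$ is Cartier, so that $\mathcal{E}$ is a genuine line bundle.

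Finally, I would apply Proposition \ref{Prop:3.3.7CLS} a second time, now to the inclusion $N'\oplus\Z\hookrightarrow N\oplus\Z$, whose cokernel is still $G$, to conclude $\tot\omega_{X_{\Sigma,N}}\cong \tot\mathcal{E}^\vee/G$ and hence the desired stack isomorphism $[\tot\omega_{X_{\Sigma,N}}]\cong[\mathbb{A}(\mathcal{E})/G]$. The required $G$-equivariance of $\mathcal{E}$ is automatic: as a toric line bundle $\mathcal{E}$ carries a canonical $T_{N'}$-linearisation, and $G$ embeds into $T_{N'}$ as the kernel of the torus surjection $T_{N'}\twoheadrightarrow T_N$ induced by $N'\hookrightarrow N$. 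The main subtlety I expect is careful bookkeeping of the total-space sign and index conventions in order to verify that $(v'_\rho,\lambda_\rho)$ is genuinely primitive in $N'\oplus\Z$ and that the line bundle obtained is $\mathcal{E}$ rather than $\mathcal{E}^\vee$ or a torsion twist thereof; once these conventions are pinned down, the argument is a formal iteration of Proposition \ref{Prop:3.3.7CLS} together with the toric dictionary.
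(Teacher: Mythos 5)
Your proposal is correct and follows essentially the same route as the paper: compute the fan of $\tot\omega_{X_{\Sigma,N}}$ in $N\oplus\Z$, rewrite its primitive ray generators with respect to $N'\oplus\Z$ using the integers $\lambda_\rho$ relating the two primitive generators on each ray, recognize the resulting fan as $\mathbb{A}\bigl(\O_{X_{\Sigma,N'}}(\sum_\rho\lambda_\rho D'_\rho)\bigr)$, and apply Proposition \ref{Prop:3.3.7CLS} to the inclusion $N'\oplus\Z\hookrightarrow N\oplus\Z$, with $G$-equivariance coming from $G=N/N'$ sitting inside $T_{N'}$ as the kernel of $T_{N'}\twoheadrightarrow T_N$. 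The only difference is that you are somewhat more careful about the primitivity of $(v'_\rho,\lambda_\rho)$ and the $\mathcal{E}$ versus $\mathcal{E}^\vee$ bookkeeping, which the paper glosses over; both points check out (primitivity is automatic since $v'_\rho$ is already primitive in $N'$, and $\tot\O(D)=\mathbb{A}(\O(-D))$ gives the stated $\mathcal{E}$).
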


\begin{proof}
Let $\{u_\rho\vert\rho\in \Sigma(1)\}$ denote the set of primitive generators of the rays of $\Sigma(1)$ with respect to the lattice $N$. Then the vector bundle $\tot \omega_{X_{\Sigma,N}}$ has a fan $\Sigma_\mathcal{V}$ in $(N\oplus \Z)_{\R}$ obtained by considering the rays generated by $\{u_\rho+e_{n+1}\vert \rho\in\Sigma(1)\}\cup\{e_{n+1}\}$, where $e_{n+1}$ is the generator of the $\Z$-component of $N\oplus \Z$. In $N'$, the rays $\rho\in\Sigma(1)$ have primitive generators $v_\rho$. The primitive generators in the two lattices are related, as the generators in the sublattice $N'$ viewed as element in $N$ is a multiple of the generators of $\rho\cap N$. As such, for every ray $\rho$, there is an integer $\beta_\rho$ such that $u_{\rho}=\frac{1}{\beta_\rho}v_\rho$. Thus the fan $\Sigma_\mathcal{V}$ has primitive ray generators $\{ v_\rho+b_\rho e_{n+1}\vert \rho \in \Sigma\}\cup\{e_{n+1}\}$ and maximal cones again obtaining via star subdivision on $e_{n+1}$. We recognise this to be the fan of the toric variety $\tot \O_{X_{\Sigma,N'}}(-\Sigma\beta_\rho D_\rho)$, where $D_\rho$ is the toris invariant Weil divisor associated to the ray $\rho$. Thus, by Proposition \ref{Prop:3.3.7CLS}, we obtain\[
[\tot \omega_{X_{\Sigma,N}}]=[\mathbb{A}(\mathcal{E})/(N/N')].
\]
The equivariance statement follows again as the bundle is torus-equivariant by construction, and the group $N/N'$ is a subgroup of the torus so we obtain a $(N/N')$-equivariant structure. 
\end{proof}
\begin{remark}
In the context of the Gorenstein cones $\sigma=\cone(P\times\{1\})$ we aim to examine, we thus should ask when a polytope $P$ with interior point 0 has a face fan that, in an appropriate lattice, defines a smooth projective variety. This happens if and only if each cone is simplicial and the primitive generators of each cone's rays form a basis of the lattice. We do note that oftentimes, already the statement on simplicity fails.
 As an example, consider the cube in $\R^3$ -  its face fan contains the cone $\cone((1,1,1),(1,-1,1),(-1,1,1),(-1,-1,1))$, which is not simplicial. In the case of cones over two-dimensional polytopes, however, the face fan always gives a simplicial projective toric variety (since all two-dimensional cones are simplicial). 
  
\end{remark}

The results above suggest the following preliminary strategy for finding an NCCR to a given Gorenstein cone $\sigma=\cone(P\times\{1\})$.
\begin{enumerate}
    \item First, check that the polytope $P$ contains an interior point. If necessary, translate $P$ so that $0\in\operatorname{Int}(P)$. Currently, we focus on those polytopes whose vertices are primitive after this translation operation. 
    \item Consider the face fan of $P$. If the fan is simplicial, aim to find a sublattice $N'$ of $N$ such that $\Sigma_P$ is smooth when viewed as fan in $N'$.
    \item Find a tilting complex $\mathcal{T}$ on $X_{\Sigma_P, N'}$ (or partial tilting with finite global dimension) and check for $G$-equivariance, where $G=N/N'$. 
    We note that, by the proof of Proposition \ref{Prop:3.3.7CLS} in \cite{CLS}, the group emerges as kernel of the map between the tori $T_{N'}\rightarrow T_N$ and so, as toric vector bundles are torus-equivariant, they are also equivariant with respect to $G$.
    \item Apply Theorem \ref{Thm:Nova4.1} to obtain a tilting object $\mathcal{T}_G$ on $[X_{\Sigma_P,N'}/G]$, which by Proposition \ref{Prop:3.3.7CLS} is isomorphic to $\mathcal{X}_{\Sigma_P,N}$.
    \item Find the $G$-equivariant vector bundle $\mathcal{E}$ on $X_{\Sigma,N'}$ such that $[\tot \omega_{X_{\Sigma,N}}]\cong[\mathbb{A}(\mathcal{E})/G]$, using Proposition \ref{Prop:IfLattice}.
    \item Compute $H^i(X,\mathcal{T}_G^\vee\otimes \mathcal{T}_G\otimes S^\bullet(\mathcal{E}))$ for $i>0$. 
    \item If the cohomology vanishes, apply Corollary \ref{Cor:Nova+myresult} and an NCCR of $R_\sigma=k[\sigma^\vee\cap M]$.
\end{enumerate}

This strategy makes several assumptions which will not hold in general - it does not deal with cases where the face fan is not simplicial, it requires the polytope to have primitive vertices (after translating) and we also in general are not guaranteed the existence of a lattice in which the fan becomes smooth or of tilting objects/vanishing cohomology. It would be an interesting endeavor for future research to relax some of these conditions. Nevertheless, the strategy is useful for finding explicit NCCRs in some cases and so we illustrate its use in a few more examples.

\begin{example}
Consider a two-dimensional polytope $P$ such that the vertices are primitive and $0\in\operatorname{Int}(P)$. Since two-dimensional cones are simplicial, the face fan of any two-dimensional polytope is simplicial.
In \cite{HP14}, the authors prove that all smooth rational surfaces admit tilting bundles, obtained by manipulating standard tilting bundles on Hirzebruch surfaces.

Denote by $\sigma$ the cone $\cone(P\times\{1\})$. Then $X_\sigma=\tot\omega_{X_P}$, where $X_P$ is the projective variety defined by the face fan of $P$. For a given such cone, reproving the existence of an NCCR (see Theorem \ref{Thm:Broomhead}) can be done by showing $H^i(X_P, \mathcal{T}^\vee\otimes \mathcal{T}\otimes \omega_{X_P}^{\otimes l})=0$ for $i\neq 0, l>0$. This also gives an explicit form of said NCCR as $\End(\pi^\ast\mathcal{T})$ for $\mathcal{T}$ the tilting bundle.
\end{example}

\begin{example}
\label{Exa:Fabricated}
    Consider the polytope \[P=\conv((2,2,3),(0,2,3),(1,3,3,),(1,1,3),(2,3,6),(0,1,0))\] and consider the cone $\sigma=\cone(P\times\{1\})$. The polytope $P$ contains the interior point $m=(1,2,3)$. Shifting by $m$ gives the polytope \[P'=\conv((1,0,0),(-1,0,0),(0,1,0),(0,-1,0),(1,1,3),(-1,-1,-3))\] with associated cone $\sigma'=\cone(P'\times\{1\})$. Consider the lattice $N'=\Z\langle (1,0,0),(0,1,0),(1,1,3)\rangle$. It has index $N/N'\cong (\Z/3\Z)$ in the lattice $N\cong \Z^3$ with standard generators $(1,0,0)$, $(0,1,0)$, $(0,0,1)$. We consider the face fan $\Sigma$ of $P'$. In $N'$, the fan $\Sigma$ corresponds to $\P^1\times\P^1\P^1$, and so by Proposition \ref{Prop:3.3.7CLS}, we have $\mathcal{X}_{\Sigma,N}\cong[(\P^1\times\P^1\times\P^1)/(\Z/3\Z)]$. The canonical bundle $\tot \omega_{X_{\Sigma,N}}$ has a fan corresponding to the star subdivision of $\sigma'$ on $(0,0,0,1)$. In $N'\oplus \Z$ this gives the fan for the vector bundle $\tot \omega_{\P^1\times\P^1\times\P^1}$, and thus $[\tot \omega_{X_{\Sigma,N}}]\cong[\tot \omega_{\P^1\times\P^1\times\P^1}/(\Z/3\Z)]$. The standard tilting bundle $\bigoplus_{l_i\in \{0,1\}} \O(l_1,l_2,l_3)$ on $\P^1\times\P^1\times\P^1$ gives, via Theorem \ref{Thm:Nova4.1}, a tilting bundle $\mathcal{T}_G$ on $\mathcal{X}_{\Sigma,N}$, which lifts to the canonical bundle. It remains to check that $H^i(\P^1\times\P^1\times\P^1, \mathcal{T}_G\otimes\mathcal{T}_G\otimes\O(2l,2l,2l))=0$ for $i\neq 0, l>0$.
\end{example}

\begin{example}
\label{Exa:FMSviaCanonical}
    Consider the cone $\sigma=\cone((2,2,1),(2,0,1),(0,0,1),(0,-2,1))$. We start by shifting the underlying polytope $P=\conv((2,2),(2,0),(0,0),(0,-2))$ by $(-1,0)$ to obtain the, by Lemma \ref{Lem:TranslationOfCone} equivalent, situation of \[\sigma'=\cone((-1,0,1),(1,2,1),(-1,-2,1),(1,0,1)).\] Denote the lattice corresponding to the first two coordinates by $N$. Now consider the index 2 sublattice $\Z\langle(1,0),(1,2)\rangle$, denoted by $N'$. There, the face fan of the polytope $\conv((-1,0),(1,0),(1,2),(-1,-2))$ becomes the natural fan for $\P^1\times \P^1$ and so $\mathcal{X}_{\Sigma_P,N}\cong[\P^1\times\P^1/(\Z/2\Z)]$. 
    
    \noindent The standard tilting bundle $\mathcal{T}$ on $\P^1\times \P^1$ allows for a $(\Z/2\Z)$-equivariant structure, and so we obtain a tilting object $\mathcal{T}_G=\mathcal{T}\otimes(\O_{\P^1\times\P^1} \otimes W_1)\oplus \mathcal{T}\otimes (\O_{\P^1\times\P^1} \otimes W_2)$, where $W_1,W_2$ are the irreps of $\Z/2\Z$. We note that the ray generators of the toric vector bundle $\tot \omega_{X_{\Sigma_P,N}}$ are $(1,0,1), (-1,-2,1), (-1,0,1), (1,2,1)$ and $(0,0,1)$ and are primitive in both lattices. Thus, we get the isomorphism $[\tot\omega_{X_{\Sigma_P.N}}]\cong [\tot\omega_{\P^1\times\P^1}/(\Z/2\Z)]$ and $\tot \omega_{\P^1\times\P^1}=\mathbb{A}(\O(-K_{\P^1\times\P^1}))=\mathbb{A}(\O_{\P^1\times\P^1}(1,1))$. 
    One verifies $H^i(\P^1\times\P^1, \mathcal{T}_G^\vee\otimes \mathcal{T}_G\otimes \O(2l,2l))=0$ for all $i\neq 0, l>0$, and thus Theorem \ref{Thm:NovaGeneralCplx} gives an NCCR for $R=k[(\sigma')^\vee\cap M]\cong k[\sigma^\vee\cap M]$.
\end{example}

    The above example is closely related to Example 7.10 in \cite{FMS19}. In fact, the cone $C$ given in their example is precisely the cone $\cone(\frac{1}{2}P\times \{1\})$. In their work, Faber-Muller and Smith construct an NCCR for the toric variety associated to the cone $C^\vee$ (i.e. for the algebra $k[C\cap M]$). This leads to the following question.
    \begin{question}
        \label{Qn:DualNCCR}
        If a Gorenstein cone $\sigma$ has dual cone $\sigma^\vee$ also Gorenstein, does the existence of an NCCR for $k[\sigma^\vee\cap M]$ imply the existence of an NCCR for $k[\sigma\cap N]$?
    \end{question}
    Furthermore, the appearance of the factor of $\frac{1}{2}$, here obtained by modifying the lattice (our example can be obtained from theirs by refining the lattice $N$ to the index 4 sublattice $\frac{1}{2}N$), suggests another question:
    \begin{question}
    \label{Qn:MultipleOfPoly}
    Suppose $P$ is a lattice polytope. Is it true that $\sigma=\cone(P\times\{1\})$ has an NCCR if and only if $\sigma'=\cone(kP\times\{1\})$ does?
    \end{question}
\begin{remark}
    A good reason to consider this question is that we can extend our considerations to the case of $\Q$-Gorenstein cones. By basechange, any $\Q$-Gorenstein cone $\sigma$ can be written as $\sigma=\cone(P\times\{r\})$ for some lattice polytope $P\subseteq N_\R$ and $r\in \Z_{>0}$. But this is the same cone as $\sigma=\cone(\frac{1}{r}P\times\{1\})$, which may not be a lattice polytope. Further, the cone corresponds to the toric variety associated to $\cone(P\times\{1\})$ in the index $r$ sublattice $N_2=N\oplus r\Z$ of $N_1=N\oplus \Z$. Hence, $\mathcal{X}_{\sigma,N_1}=[X_{\sigma,N_2}/(\Z/r\Z)]$ by Proposition \ref{Prop:3.3.7CLS}. Finding an NCCR for $R_{\sigma, N_2}$ (the algebra derived from $X_{\sigma,N_2}$) may thus be helpful in any attempt of finding an NCCR for $R_{\sigma, N_1}$ (the algebra associated to $X_{\sigma, N_1}$).

\end{remark}

\section{The reflexive case}
\label{sec:refl}

Let us conclude the discussion of Gorenstein cones arising as canonical bundles of projective varieties by considering the case of reflexive Gorenstein cones. These cones are natural to consider apart from other, as they correspond to toric Fano varieties.
We start by proving the following theorem, showing that reflexive Gorenstein cones obtained as cones over simplicial, reflexive polytopes with less than $\dim P+2$ vertices allow for an NCCR of the associated toric algebra.

\begin{theorem}
    \label{Thm:FanoAlmostSimplicial}
    Let $P\in N_{\R}\cong \R^n$ be a simplicial, reflexive polytope with $\le n+2$ vertices. Consider the cone $\sigma=\cone(P\times\{1\})$. Then $R=k[\sigma^\vee\cap M]$ has an NCCR.
\end{theorem}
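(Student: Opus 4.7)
The plan is to split according to the number of vertices $v := |P(0)|$. Since $P$ is reflexive, its vertices are automatically primitive lattice points (otherwise the dual polytope $P^{\circ}$ would fail to be a lattice polytope), $0 \in \operatorname{Int}(P)$, and the face fan $\Sigma_P$ defines a toric Fano variety. Because $P$ is simplicial, $\mathcal{X}_{\Sigma_P}$ is a smooth Deligne-Mumford stack by Theorem \ref{Thm:4.12FK18}, and the star subdivision of $\sigma$ on the Gorenstein element $(0,\dots,0,1)$ is a regular triangulation of $P$ whose associated stack is $[\tot \omega_{X_{\Sigma_P}}]$.

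If $v = n+1$, then $\sigma$ is itself simplicial: the $n+1$ primitive rays are linearly independent because $P$ is a simplex containing $0$ in its interior. This case is handled directly by Corollary \ref{Cor:SimplicialReproof} (which relies on Lemma \ref{Lem:Fakeweighted}): one writes $\mathcal{X}_{\Sigma_P} \cong [\P^n/G]$ for a finite abelian $G$, realises $[\tot \omega_{X_{\Sigma_P}}]$ as a quotient of an affine bundle over $\P^n$ built from a positive-degree $G$-equivariant line bundle, and combines the Beilinson tilting bundle with Theorems \ref{Thm:Nova4.1} and \ref{Thm:NovaGeneralCplx} to obtain a tilting bundle on $[\tot \omega_{X_{\Sigma_P}}]$; Corollary \ref{Cor:TiltCplxworks} then yields the NCCR.

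If $v = n+2$, the cone $\sigma$ is no longer simplicial, so I work on the Cox stack $\mathcal{X}_{\Sigma_P}$ and lift through Theorem \ref{Thm:NovaDMstack}. The stack $\mathcal{X}_{\Sigma_P}$ is a smooth toric Fano Deligne-Mumford stack of Picard rank $2$, and this is exactly the setting in which Borisov-Hua \cite{BH09} verify King's conjecture: they produce a full strong exceptional collection of line bundles $\mathcal{L}_1, \dots, \mathcal{L}_N$, so that $\mathcal{T} := \bigoplus_i \mathcal{L}_i$ is a tilting bundle on $\mathcal{X}_{\Sigma_P}$. Applying Theorem \ref{Thm:NovaDMstack} with $V = \omega_{X_{\Sigma_P}}$ then produces a tilting complex on $\mathcal{X}_{\mathcal{V}}$, where $\mathcal{V}$ is the fan of $\tot \omega_{X_{\Sigma_P}}$, and Corollary \ref{Cor:TiltCplxworks} applied to the regular triangulation given by the star subdivision of $P$ on $0$ delivers the NCCR.

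The hard part is verifying the cohomological hypothesis of Theorem \ref{Thm:NovaDMstack} in the second case, namely that $H^k(\mathcal{X}_{\Sigma_P}, \mathcal{L}_j \otimes \mathcal{L}_i^{-1} \otimes \omega_{X_{\Sigma_P}}^{-l}) = 0$ for all $k > 0$, all pairs $(i,j)$, and all $l \ge 1$. Ampleness of $\omega^{-1}$ gives asymptotic vanishing by Serre, but the statement must hold from $l = 1$ onwards. The strategy is to combine the combinatorial description of line-bundle cohomology on toric DM stacks (cohomology is supported in certain forbidden chambers of the Picard group) with the explicit placement of the differences $\mathcal{L}_j \otimes \mathcal{L}_i^{-1}$ inside $\operatorname{Pic}(\mathcal{X}_{\Sigma_P})$ guaranteed by Borisov-Hua: the exceptional collection is arranged so that these differences already lie near the nef cone, and a single positive twist by the anticanonical suffices to push every relevant line bundle into the cohomology-vanishing region. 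The Kleinschmidt-type classification of Picard-rank-two smooth toric Fano DM stacks reduces this to a finite combinatorial check.
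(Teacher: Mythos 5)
Your overall structure is sound and your case split is reasonable, but the proof has a genuine gap precisely at the step you flag as "the hard part." For $v=n+2$ you invoke Theorem~\ref{Thm:NovaDMstack}, and correctly identify that you must verify the vanishing $H^k(\mathcal{X}_{\Sigma_P}, \mathcal{L}_j\otimes\mathcal{L}_i^{-1}\otimes(\omega^\vee)^{\otimes l})=0$ for all $k>0$, $l\ge1$. But you only gesture at a strategy ("a single positive twist by the anticanonical suffices to push every relevant line bundle into the cohomology-vanishing region") and then defer to "a Kleinschmidt-type classification \dots\ reduces this to a finite combinatorial check." Neither of these claims is substantiated, and the second is actively misleading: there is no obvious sense in which Picard-rank-two smooth toric Fano DM stacks form a finite family across all dimensions, so a finite-check argument is not available. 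The paper instead supplies a uniform geometric argument by analysing Borisov--Hua's \emph{forbidden cones} directly. In Picard rank $1$ the single forbidden cone is a degree half-line and the differences $\mathcal{L}_i^\vee\otimes\mathcal{L}_j$ already have degree $>\deg K$, so twisting by $l\sum D_\rho$ only increases degree and stays outside it. In Picard rank $2$ there are exactly three forbidden cones $F_\emptyset,F_+,F_-$; Borisov--Hua place the $\mathcal{L}_i$ in a fundamental parallelogram $Q+p$, so that the differences land in the interior of $2Q$; the paper then checks explicitly that adding $l\sum D_\rho$ cannot move a bundle into $F_\emptyset$ (since $F_\emptyset=-\sum(1+\R_{\ge0})D_\rho$) and does not change on which hyperplane parallel to the supporting hyperplanes $H_\pm$ of $F_\pm$ the bundle lies, hence the twist stays outside all three. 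This concrete two-step argument is what you are missing, and it is the entire content of the theorem beyond routine assembly of the earlier results.

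Two smaller points. First, your $v=n+1$ branch via Corollary~\ref{Cor:SimplicialReproof} is a legitimate alternative to the paper's uniform treatment (the paper handles Picard ranks $1$ and $2$ together via the forbidden-cone argument), and your justification that $\sigma$ is simplicial when $P$ is an $n$-simplex with $0$ in its interior is correct: the unique linear relation among the $n+1$ vertices has nonzero coefficient sum, so the lifted generators in $N\oplus\Z$ are independent. Second, your observation that reflexive implies primitive vertices is correct and worth making explicit (a non-primitive vertex would force the corresponding facet of $P^\circ$ into a non-lattice hyperplane). But neither of these observations closes the gap in the $v=n+2$ case; you should work out the forbidden-cone geometry rather than appeal to a classification.
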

\begin{proof}
As $\sigma$ is the cone over a reflexive polytope, it is reflexive Gorenstein of index 1 (see Proposition 1.11 \cite{BN07}). 
Consider the face fan $\Sigma_P$ of $P$. By construction, it is a complete, simplicial fan. Since the fan is simplicial, by Theorem \ref{Thm:4.12FK18} the toric DM stack is smooth. Furthermore, we note that $P$ is reflexive, and so $X_{\Sigma_P}$ is Fano. Hence, Theorem 5.11 in \cite{BH09} guarantees the existence of a tilting bundle on $\mathcal{X}_{\Sigma_P}$, as $\operatorname{rk}\operatorname{Pic}(\mathcal{X}_{\Sigma_P})\le 2$.  
By Theorem \ref{Thm:NovaDMstack}, the tilting bundle $\mathcal{T}$ pulls back to a tilting bundle on $[\tot \omega_{X_\Sigma}]$ if and only if $H^i(\mathcal{X}_{\Sigma},\mathcal{T}^\vee\otimes\mathcal{T}\otimes (\omega_{\mathcal{X}_\Sigma}^\vee)^{\otimes l})=0$ for $l>0, i\neq0$. 
To prove this, we study the form of the tilting bundle given in \cite{BH09}. Borisov and Hua define convex sets called \newterm{forbidden cones} in $\operatorname{Pic}_\R(\mathcal{X}_\Sigma)=\operatorname{Pic}(\mathcal{X}_\Sigma)\otimes \R$ and show that a line bundle whose image in $\operatorname{Pic}_\R(\mathcal{X}_\Sigma)$ does not lie in any forbidden cone is acyclic.

If $\operatorname{rk}\operatorname{Pic}(\mathcal{X}_\Sigma)=1$, the single forbidden cone is given by $x\in \operatorname{Pic}_\R(\mathcal{X}_\Sigma)$ such that $\deg(x)\le -\sum_{\rho\in \Sigma(1)} \deg(D_\rho)=\deg(K_{\Sigma})$, where $K_\Sigma$ is the canonical divisor. Here, the degree function takes value 1 on the positive generator of the Picard group.
Borisov and Hua go on to prove that their full strong exceptional collection of line bundles $\{\mathcal{L}_i\}$, which we can build the tilting bundle out of, has the property that $\mathcal{L}_i^\vee\otimes\mathcal{L}_j$ has degree $>\deg(K_\Sigma)$ for any $i,j$. We note that $\mathcal{L}_i^\vee\otimes \mathcal{L}_k\otimes \O(l\sum_{\rho\in\Sigma(1)}D_\rho)$ has degree $\deg(\mathcal{L}_i^\vee\otimes \mathcal{L}_j)+l|\Sigma(1)|$ and so the line bundle still lies outside the unique forbidden cone. Hence, all direct summands of $\mathcal{T}^\vee\otimes\mathcal{T}\otimes (\omega_{\mathcal{X}_\Sigma}^\vee)^{\otimes l}$ are acyclic, and so the cohomology vanishes as required.

In the case of $\operatorname{rk}\operatorname{Pic}(\mathcal{X}_\Sigma)=2$, the proof changes slightly, but follows the same idea. In this case, there are exactly three forbidden cones, which we will name $F_\emptyset, F_+$ and $F_-$, which are defined via three subsets of $\Sigma(1)$: the empty set as well as two sets $I_+, I_-$ forming a partition of $\Sigma(1)$. There is a parallelogram $Q\ni 0$ such that no points in the interior of $2Q$ lie in the forbidden cones, and the boundary of $2Q$ contains exactly three points that lie in the forbidden cones, $-\sum_{\rho \in \Sigma(1)}D_\rho$, $-\sum_{\rho\in I_+}D_\rho$ and $-\sum_{\rho\in I_-}D_\rho$. In fact, one pair of parallel sides of $2P$ gives supporting hyperplanes $H_-, H_+$ for the forbidden cones $F_-$ and $F_+$.
The full strong exceptional collection of line bundle $\{\mathcal{L}_i\}$ is defined as those line bundles whose image in $\operatorname{Pic}_\R(\mathcal{X}_\Sigma)$ lies in $Q+p$ for some generic point $p$ with the property that $P+p$ contains no point of $\operatorname{Pic}(\mathcal{X}_\Sigma)\otimes \Q$. All bundles of the form $\mathcal{L}_i^\vee\otimes \mathcal{L}_j$ by construction lie in the interior of $2Q$ and thus not in any forbidden cone, hence are acyclic. 

\noindent However, more is true: One can easily verify that $\mathcal{L}_i^\vee\otimes\mathcal{L}_j\otimes \O(l\sum_{\rho\in\Sigma(1)} D_\rho)$ remains outside the forbidden cones as well. The forbidden cone $F_\emptyset$ is defined as $-\sum_{\rho\in \Sigma(1)}(1+\R_{\ge0})D_\rho$, and so adding $l\sum_{\rho\in\Sigma(1)}D_\rho$ cannot translate $\mathcal{L}_i^\vee\otimes \mathcal{L}_j$ into the forbidden cone $F_\emptyset$.
Furthermore, adding $l\sum_{\rho\in\Sigma(1)}D_\rho$ does not affect on which hyperplane parallel to the supporting hyperplanes $H_{\pm}$ the line bundle lies. Hence, $\mathcal{L}_i^\vee\otimes\mathcal{L}_j\otimes \O(l\sum_{\rho\in\Sigma(1)}D_\rho)$ lies between, but not in, the two forbidden cones $F_{\pm}$. Since it lies in none of the forbidden cones, it is acyclic. This is true for all components of $\mathcal{T}^\vee\otimes \mathcal{T}\otimes (\omega_{\mathcal{X}_\Sigma}^\vee)^{\otimes l}$, and so the cohomology vanishes as required. By Theorem \ref{Thm:NovaDMstack}, $R=k[\sigma^\vee\cap M]$ thus has an NCCR, as required.
\end{proof}
\begin{remark}
    In light of Lemma \ref{Lem:TranslationOfCone}, we also obtain NCCRs for (almost) simplicial Gorenstein cones which are reflexive with respect to other lattice points than $(0,\dots,0,1)$
\end{remark}

The vanishing condition here was deduced by using the explicit construction of the tilting bundle on the smooth DM stack $\mathcal{X}_{\Sigma_P}$. However, we suspect that vanishing of higher cohomology of $\mathcal{T}^\vee\otimes \mathcal{T}\otimes (\omega^\vee)^{\otimes l}$ is a property of toric Fanos for tilting objects $\mathcal{T}$, and not of the specific tilting bundle constructed by Borisov and Hua. In the example of the tilting bundle, we could show strong acyclicity of $\O(F_j-F_i+\sum D_\rho)$ for any direct summands $\mathcal{L}_i=\O(F_i)$, $\mathcal{L}_j=\O(F_j)$ of $\mathcal{T}$. The intuition we have tells us that a similar condition holds in general: any component of $\mathcal{T}^\vee\otimes \mathcal{T}$ and their dual are acyclic and we suggest that twisting by $(\omega^\vee)^{\otimes l}$ preserves that property in the Fano case. 
Thus we formulate the following conjecture.

\begin{conjecture}
    \label{Conj:FanoVanishingII}
    Let $P$ be a reflexive polytope. Consider a simplicial fan $\Sigma$ such that the primitive generators of the rays $\rho\in\Sigma(1)$ are the vertices of $P$. If $\mathcal{X}_\Sigma$ has a tilting object $\mathcal{T}$, then so does $\mathcal{X}_\mathcal{V}$, where $\mathcal{V}$ is the fan of the canonical bundle over $X_\Sigma$. Thus, if $\mathcal{X}_\Sigma$ has a tilting object, $R=k[|\cone(P\times\{1\})|^\vee\cap M]$ has an NCCR.
\end{conjecture}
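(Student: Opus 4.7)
The plan is to reduce the conjecture to a cohomology vanishing statement via Theorem \ref{Thm:NovaDMstack}: setting $V=\omega_{X_{\Sigma}}$, it suffices to verify
\[
H^i\bigl(\mathcal{X}_\Sigma,\; \mathcal{T}^\vee \otimes \mathcal{T} \otimes \operatorname{Sym}^\bullet(f_\Sigma^\ast \omega_{X_\Sigma})\bigr) = 0
\]
for all $i \neq 0$. Since $P$ is reflexive, $X_\Sigma$ is Fano and $\omega_{X_\Sigma}^\vee$ is ample. The $l=0$ summand is the tilting hypothesis, so the real content is to establish $H^i(\mathcal{X}_\Sigma, \mathcal{T}^\vee \otimes \mathcal{T} \otimes (\omega_{\mathcal{X}_\Sigma}^\vee)^{\otimes l})=0$ for $i\neq 0$ and $l>0$.

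My first move would be to reduce to the case where $\mathcal{T}$ splits as a direct sum of line bundles $\bigoplus_i \mathcal{L}_i$, since then $\mathcal{T}^\vee\otimes\mathcal{T}\otimes(\omega_{\mathcal{X}_\Sigma}^\vee)^{\otimes l}$ decomposes as a direct sum of line bundles of the form $\mathcal{L}_i^\vee\otimes\mathcal{L}_j\otimes\O\bigl(l\sum_\rho D_\rho\bigr)$. The general case of a tilting complex should then follow by a spectral sequence argument using the decomposition of each cohomology sheaf. With the reduction in hand, I would generalise the Borisov--Hua forbidden-cone analysis used in Theorem \ref{Thm:FanoAlmostSimplicial}. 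The tilting assumption for $\mathcal{T}$ already says that every difference $\mathcal{L}_i^\vee\otimes\mathcal{L}_j$ avoids all forbidden cones in $\operatorname{Pic}_\R(\mathcal{X}_\Sigma)$; the goal is to show that translation by $l\sum_\rho D_\rho = -lK_{\mathcal{X}_\Sigma}$ preserves this avoidance. The forbidden cone $F_\emptyset$ is a translate of $-\sum(1+\R_{\geq 0})D_\rho$, and is therefore automatically avoided by translating in the $+\sum D_\rho$ direction, exactly as in the proof of Theorem \ref{Thm:FanoAlmostSimplicial}. The hope is that the forbidden cones indexed by proper subsets $I\subsetneq\Sigma(1)$ are similarly avoided thanks to the reflexivity of $P$.

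The main obstacle will be controlling arbitrary Picard rank. In Theorem \ref{Thm:FanoAlmostSimplicial} the proof succeeded because there are at most three forbidden cones when $\operatorname{rk}\operatorname{Pic}(\mathcal{X}_\Sigma)\leq 2$, and the geometry of the parallelogram $Q$ gave immediate control. For higher Picard rank, the forbidden cones proliferate and the translation direction $-K_{\mathcal{X}_\Sigma}$ no longer sits on a natural supporting hyperplane of each. A promising way to sidestep this combinatorial explosion is to abandon the forbidden-cone calculus and invoke a Kawamata--Viehweg type vanishing on the Fano DM stack $\mathcal{X}_\Sigma$: write each line-bundle summand as $\mathcal{N}_{i,j,l}\otimes \omega_{\mathcal{X}_\Sigma}^\vee$ and show that $\mathcal{N}_{i,j,l}$ is nef once $l$ exceeds a bound depending only on the fixed tilting object. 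For the finitely many remaining small values of $l$, one would fall back on a combinatorial Bott-type vanishing on toric DM stacks, verifying the cohomology vanishing on a case-by-case basis using the polytope of sections of each line bundle.
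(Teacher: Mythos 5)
This statement is Conjecture~\ref{Conj:FanoVanishingII}, which the paper does \emph{not} prove --- it is posed as an open conjecture, so there is no proof to compare against. The paper proves only a special case, Theorem~\ref{Thm:FanoAlmostSimplicial}, where the reflexive polytope is simplicial with at most $n+2$ vertices, i.e.\ $\operatorname{rk}\operatorname{Pic}(\mathcal{X}_\Sigma)\le 2$, precisely the regime in which Borisov and Hua \cite{BH09} construct a full strong exceptional collection of line bundles. Your opening move (reduce to vanishing of $H^i(\mathcal{X}_\Sigma,\mathcal{T}^\vee\otimes\mathcal{T}\otimes(\omega^\vee)^{\otimes l})$ via Theorem~\ref{Thm:NovaDMstack}) is the right one and matches what the paper does in the proven special case.

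There are genuine gaps in the rest. First, your reduction to the case where $\mathcal{T}$ is a direct sum of line bundles is unjustified: a tilting complex on a toric DM stack need not be a bundle at all, and even a tilting bundle need not split into line bundles; the hypothetical ``spectral sequence argument using the decomposition of each cohomology sheaf'' would need to be made precise, since the cohomology sheaves of a complex do not generally carry the required splitting or the partial-tilting property. In Theorem~\ref{Thm:FanoAlmostSimplicial} the line-bundle decomposition is not a reduction step but an input: it comes for free from the explicit Borisov--Hua construction, which is only available for $\operatorname{rk}\operatorname{Pic}\le 2$. For higher Picard rank the existence of \emph{any} tilting object on $\mathcal{X}_\Sigma$ is itself open (this is in effect Question~\ref{Qn:FanoSimplHasTilting}), so there is no candidate collection $\{\mathcal{L}_i\}$ to feed the forbidden-cone calculus. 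Second, your Kawamata--Viehweg fallback is a plan rather than a proof: showing $\mathcal{N}_{i,j,l}$ is nef for $l\gg 0$ would need a concrete bound in terms of $\mathcal{T}$, and ``verifying case-by-case for small $l$'' is not a valid step for the infinite family of reflexive polytopes covered by the conjecture. As stated, your proposal correctly identifies both the right machinery and the main obstruction (arbitrary Picard rank), but it does not close either gap and so does not constitute a proof --- consistent with the paper's own decision to leave the statement as a conjecture.
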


If the reflexive polytope $P$ is simplicial, its face fan is already simplicial and hence the associated toric DM stack is smooth (Theorem \ref{Thm:4.12FK18}). This describes smooth Fano DM stacks as discussed in Borisov and Hua's paper \cite{BH09}, so one may hope that their methods extend to prove tilting bundles or complexes for such $\mathcal{X}_\Sigma$. This gives the following special case of Conjecture \ref{Conj:FanoVanishingII}.

\begin{conjecture}
    \label{Conj:FanoVanishing}
    Let $P$ be a simplicial, reflexive polytope. Consider the face fan $\Sigma$ of $P$. If $\mathcal{X}_\Sigma$ has a tilting object $\mathcal{T}$, then so does $\mathcal{X}_\mathcal{V}$, where $\mathcal{V}$ is the fan of the canonical bundle over $X_\Sigma$. Thus, if $\mathcal{X}_\Sigma$ has a tilting object, $R=k[|\cone(P\times\{1\})|^\vee\cap M]$ has an NCCR.
\end{conjecture}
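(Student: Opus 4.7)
The plan is to apply Theorem \ref{Thm:NovaDMstack} directly, taking $V = \omega_{X_\Sigma}$. Since $\omega_{X_\Sigma}$ is a line bundle, its symmetric algebra decomposes as $\operatorname{Sym}^\bullet(f_\Sigma^\ast \omega_{X_\Sigma}^\vee) = \bigoplus_{l \geq 0} (f_\Sigma^\ast \omega_{X_\Sigma}^\vee)^{\otimes l}$, so the hypothesis of that theorem reduces to verifying the single family of vanishings
\[
H^i\bigl(\mathcal{X}_\Sigma,\; \mathcal{T}^\vee \otimes \mathcal{T} \otimes (\omega_{\mathcal{X}_\Sigma}^\vee)^{\otimes l}\bigr) = 0 \qquad \text{for all } i \neq 0 \text{ and all } l \geq 1,
\]
the case $l = 0$ being the partial-tilting hypothesis $\Ext^{>0}(\mathcal{T},\mathcal{T}) = 0$. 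Once this is known, Theorem \ref{Thm:NovaDMstack} directly gives both the tilting object on $\mathcal{X}_\mathcal{V}$ (namely $\pi^\ast \mathcal{T}$, where $\pi \colon \mathcal{X}_\mathcal{V} \to \mathcal{X}_\Sigma$ is the natural projection) and the NCCR of $R = k[|\mathcal{V}|^\vee \cap M] = k[\cone(P\times\{1\})^\vee \cap M]$, so the work is entirely in the cohomology estimate.

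The geometric input needed is the Fano property: because $P$ is reflexive and simplicial, $\mathcal{X}_\Sigma$ is a smooth proper toric DM stack (Theorem \ref{Thm:4.12FK18}) on which $\omega_{\mathcal{X}_\Sigma}^\vee = \mathcal{O}(\sum_\rho D_\rho)$ is ample. My proposed route mirrors the argument of Theorem \ref{Thm:FanoAlmostSimplicial}: first replace $\mathcal{T}$ by a derived-equivalent \emph{line-bundle} model $\bigoplus_a \mathcal{L}_a$ (which does not change $\Rshom(\mathcal{T},\mathcal{T})$ and hence preserves the cohomology groups in question), reducing the problem to showing
\[
H^i\bigl(\mathcal{X}_\Sigma,\; \mathcal{L}_a^{-1} \otimes \mathcal{L}_b \otimes \mathcal{O}(l \textstyle\sum_\rho D_\rho)\bigr) = 0, \qquad i \neq 0,\; l \geq 1,
\]
for all pairs $(a,b)$. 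Now invoke the Borisov--Hua machinery of \emph{forbidden cones} in $\operatorname{Pic}_\R(\mathcal{X}_\Sigma)$: acyclicity of a line bundle is equivalent to its Picard class avoiding every forbidden cone. Since $\bigoplus_a \mathcal{L}_a$ is tilting, each class $[\mathcal{L}_a^{-1} \otimes \mathcal{L}_b]$ already lies outside all forbidden cones, and the task is to show this property is preserved under translation by $l[\sum_\rho D_\rho]$. For the canonical forbidden cone $F_\emptyset = -\sum_\rho (1 + \R_{\geq 0}) D_\rho$ this is automatic since translation in the anticanonical direction moves one away from it; for the remaining forbidden cones, one must argue that anticanonical translation is transverse to the separating hyperplanes, exactly as in the Picard-rank-$\leq 2$ analysis of Theorem \ref{Thm:FanoAlmostSimplicial}.

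The main obstacle is thus two-fold and explains why the statement is phrased as a conjecture. First, reducing an arbitrary tilting object to a sum of line bundles is a form of the Kawamata--Borisov--Hua conjecture on strong exceptional collections on simplicial toric Fano DM stacks, which is known only in special cases (e.g.\ Borisov--Hua's Picard-rank-$\leq 2$ result used in Theorem \ref{Thm:FanoAlmostSimplicial}). Second, even granting such a model, extending the combinatorial control of forbidden cones under anticanonical translation to arbitrary Picard rank requires a systematic description of the forbidden cone locus in higher rank, going beyond the explicit parallelogram analysis of \cite{BH09}. A reasonable intermediate goal would be to prove the conjecture unconditionally in Picard rank $\leq 2$ (recovering and slightly generalising Theorem \ref{Thm:FanoAlmostSimplicial} to arbitrary tilting objects) and then to identify a class of line-bundle tilting complexes on higher-Picard-rank Fano DM stacks stable under twisting by the anticanonical class, which would yield the general case via Theorem \ref{Thm:NovaDMstack}.
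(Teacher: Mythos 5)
This statement is labelled as a \emph{conjecture} in the paper, and the paper offers no proof of it; you correctly recognise this, and your proposal is in fact a strategy sketch with an honest accounting of the obstructions. Your route—applying Theorem~\ref{Thm:NovaDMstack} with $V=\omega_{X_\Sigma}$, reducing the hypotheses to the cohomology vanishings $H^i(\mathcal{X}_\Sigma,\mathcal{T}^\vee\otimes\mathcal{T}\otimes(\omega_{\mathcal{X}_\Sigma}^\vee)^{\otimes l})=0$ for $i\neq 0$, $l\geq 1$, and then analysing these via the Borisov--Hua forbidden-cone formalism—is exactly the route the paper takes to prove the special case, Theorem~\ref{Thm:FanoAlmostSimplicial}, when the Picard rank is at most $2$. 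The two obstructions you name (absence of a line-bundle model of the tilting object in general, and the lack of a workable description of forbidden cones in higher Picard rank) are precisely what keeps the statement conjectural; the paper says as much in the remarks preceding and following the conjecture.

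One step in your sketch is, however, incorrect as stated: you claim that replacing $\mathcal{T}$ by a derived-equivalent line-bundle model $\bigoplus_a\mathcal{L}_a$ ``does not change $\Rshom(\mathcal{T},\mathcal{T})$ and hence preserves the cohomology groups in question.'' It does change $\Rshom$: two tilting objects giving Morita-equivalent endomorphism algebras need not have isomorphic sheaf-$\mathcal{H}om$ complexes, so $\mathcal{T}^\vee\otimes\mathcal{T}$ and $(\bigoplus_a\mathcal{L}_a)^\vee\otimes(\bigoplus_a\mathcal{L}_a)$ can differ, and the twisted cohomology groups $H^i(\mathcal{X}_\Sigma,-\otimes(\omega^\vee)^{\otimes l})$ are not invariants of the derived equivalence class. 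The reduction is still legitimate for the purpose of the conjecture, but for a different reason than the one you give: the conclusion is existential (``$\mathcal{X}_\mathcal{V}$ has \emph{some} tilting object''), so it suffices to exhibit \emph{any} tilting object on $\mathcal{X}_\Sigma$ whose twisted self-exts vanish—one does not need the vanishing for the originally given $\mathcal{T}$. It is also worth noting that the paper's own stated intuition (in the discussion following Conjecture~\ref{Conj:FanoVanishingII}) is that the vanishing should hold for \emph{arbitrary} tilting objects on a toric Fano DM stack, not merely line-bundle ones; if that were true it would bypass the Kawamata--Borisov--Hua obstruction entirely, but no mechanism is offered for proving it beyond the forbidden-cone computation in Picard rank $\leq 2$.
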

\begin{remark}
    In the case of a smooth toric Fano with a tilting bundle, this is a consequence of Theorem \ref{Cor:TiltCplxCanbdl}. Following Theorem 5.1 in \cite{Nova18}, the author notes the cohomology vanishing condition for smooth Fano's. This is also reflected by Theorem 3.6 in \cite{BS10}. It is further known that if a smooth projective Fano variety $X$ admits a tilting object of minimal possible global dimension, then the pullback to the canonical bundle admits a tilting object and an NCCR of the corresponding cone (see \S 1.2 in \cite{IdPKW}).
\end{remark}

\noindent It is a commonly known fact that the isomorphism classes of reflexive polytopes are in bijection with the isomorphism classes of toric Fano varieties by associating to a reflexive polytope the toric variety of its face fan (see, e.g., \cite{Casa06}). As such, the cases considered in Conjecture \ref{Conj:FanoVanishingII} correspond to simplicialisations of toric Fano varieties. We note the following result, elucidating their nature a little.
\begin{lemma}
    \label{Lem:SimplFanoisWeakFano}
    Let $\Sigma$ the fan of a toric Fano variety. Consider a fan $\Sigma'$ obtained by simplicially subdividing the cones of $\Sigma$ without adding additional rays. Then $X_{\Sigma'}$ is weak Fano.
\end{lemma}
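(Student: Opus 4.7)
The plan is to identify $-K_{X_{\Sigma'}}$ with the pullback of $-K_{X_\Sigma}$ along the refinement morphism $f\colon X_{\Sigma'}\to X_\Sigma$, and then deduce nefness and bigness from this identification. Since no rays are added, $\Sigma'(1)=\Sigma(1)$, so both anticanonical divisors are given by the same formal sum $\sum_{\rho}D_\rho$ over the same set of rays, and the question of matching them under pullback reduces to comparing their support functions.

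Concretely, let $\phi$ be the support function on $|\Sigma|$ determined by $-K_{X_\Sigma}$, so $\phi$ is linear on each $\sigma\in\Sigma$ with $\phi(u_\rho)=-1$ for every $\rho\in\Sigma(1)$ (here we use that $X_\Sigma$ is a toric Fano, so $-K_{X_\Sigma}$ is $\Q$-Cartier). Let $\phi'$ be the analogous support function on $\Sigma'$ attached to $-K_{X_{\Sigma'}}$. For any $\sigma'\in\Sigma'$ there is some $\sigma\in\Sigma$ with $\sigma'\subseteq\sigma$; the restriction $\phi|_{\sigma'}$ is linear because $\phi|_\sigma$ is, and it agrees with $\phi'$ on the primitive generators of $\sigma'$ since $\Sigma'(1)\subseteq\Sigma(1)$ and both assign $-1$ there. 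Because $\Sigma'$ is simplicial, a linear function on $\sigma'$ is determined by its values on the ray generators, so $\phi'=\phi$ on $\sigma'$, hence everywhere. Translating this equality of support functions back to divisorial data yields $f^*(-K_{X_\Sigma})=-K_{X_{\Sigma'}}$ as $\Q$-Cartier divisors (the $\Q$-factoriality of $X_{\Sigma'}$ coming from $\Sigma'$ being simplicial).

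With the pullback identity in hand, the rest is formal. The morphism $f$ is proper and birational since $\Sigma'$ is a refinement of $\Sigma$ with $|\Sigma'|=|\Sigma|$. Pullback of an ample divisor under a proper morphism is nef, so $-K_{X_{\Sigma'}}=f^*(-K_{X_\Sigma})$ is nef. Pullback under a proper birational morphism preserves bigness, so $-K_{X_{\Sigma'}}$ is also big. Therefore $X_{\Sigma'}$ is weak Fano.

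The delicate point is the support function comparison: one must check that $\phi$ remains linear after restriction to each cone of the refinement, which is exactly where the hypotheses \emph{simplicial subdivision} and \emph{no additional rays} enter. If a new ray $\rho_{\mathrm{new}}$ were allowed, then $u_{\rho_{\mathrm{new}}}$ would sit strictly inside some $\sigma\in\Sigma$, and typically $\phi(u_{\rho_{\mathrm{new}}})\ne-1$, obstructing the identification $-K_{X_{\Sigma'}}=f^*(-K_{X_\Sigma})$ and in general producing only a discrepancy (making $X_{\Sigma'}$ merely a resolution rather than weak Fano). This is the main obstacle that the hypothesis on rays is tailored to overcome.
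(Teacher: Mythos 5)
Your proof is correct, and it arrives at essentially the same pivotal observation as the paper — namely, that the support function of $-K_{X_{\Sigma'}}$ coincides with that of $-K_{X_\Sigma}$ because $\Sigma'(1)=\Sigma(1)$ — but packages the rest of the argument differently. You translate that equality into the pullback identity $f^*(-K_{X_\Sigma})=-K_{X_{\Sigma'}}$ for the refinement morphism $f$, and then invoke two general facts about proper birational morphisms of normal varieties: pullback of an ample divisor is nef, and pullback of a big divisor along a proper birational morphism stays big (via $f_*\mathcal{O}_{X_{\Sigma'}}=\mathcal{O}_{X_\Sigma}$). The paper, by contrast, stays entirely within toric combinatorics after establishing the support-function equality: nefness follows from Lemma 9.2.1 of \cite{CLS} (nef $\Leftrightarrow$ convex support function), and bigness from the criterion that a nef divisor on a complete toric variety is big iff its associated polytope is full-dimensional, together with $P_{-K_{X_{\Sigma'}}}=P_{-K_{X_\Sigma}}$. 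Your route is somewhat shorter and more conceptual — it makes transparent that the lemma is really an instance of the general statement ``a crepant refinement of a Fano is weak Fano'' — while the paper's route avoids any appeal to birational geometry and is self-contained within the toric dictionary. Your argument for the support-function equality (restricting the linear pieces of $\phi$ to cones of $\Sigma'$ and comparing values on ray generators) is also a bit more direct than the paper's, which cites the formula $\varphi_D(u)=\min_{m\in P_D}\langle m,u\rangle$; you have correctly identified where the hypotheses ``simplicial subdivision'' and ``no new rays'' are used, and you have noted the $\Q$-Cartier subtlety that makes $\varphi'$ well-defined in the first place.
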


\begin{proof}
    A compact variety is weak Fano if its anticanonical divisor is nef and big. Since $|\Sigma'|=|\Sigma|$, the variety is compact (as the fan is complete). The anticanonical divisor $-K_{X_{\Sigma'}}$ is given by $\sum_{\rho\in \Sigma'(1)} D_\rho$ and it is associated to a polytope $P_{-K_{X_{\Sigma'}}}=\{m\in M_\R\mid \langle m, u_\rho\rangle\ge -1\}$. On a complete toric variety $X_{\Sigma'}$, the support function of a divisor $D$ is given by $\varphi_D(u)=\min_{m\in P_D}\langle m,u\rangle$ (Theorem 6.1.7 in \cite{CLS}) and so the support function of the anticanonical divisor on $X_{\Sigma'}$ is the same as on $X_\Sigma$. Since $-K_{X_\Sigma}$ is ample (as $X_\Sigma$ is Fano), it is in particular nef. The support of $\Sigma$ is convex and thus Lemma 9.2.1 in \cite{CLS} gives that the divisor $-K_{X_{\Sigma}}$ being nef is equivalent to the support function $\varphi_{-K_{X_\Sigma}}:|\Sigma|\rightarrow \R$ being convex. As $\varphi_{-K_{X_\Sigma}}=\varphi_{-K_{X_\Sigma'}}$, Lemma 9.2.1 shows that $-K_{X_{\Sigma'}}$ is nef on $X_{\Sigma'}$. A nef divisor $D$ on a complete toric variety $X_\Phi$ is big if and only if $\dim P_D=\dim X_\Phi$, and so it suffices to show that $\dim P_{-K_{X_{\Sigma'}}}=\dim X_{\Sigma'}=\dim X_\Sigma$. By definition, $P_{-K_{X_\Sigma'}}=P_{-K_{X_\Sigma}}$, and the statement follows. Thus, the anticanonical divisor on $X_{\Sigma'}$ is nef and big, and thus the variety is weak Fano, as claimed.
\end{proof}

Knowing this, and in light of previous results, we pose the following question.

\begin{question}
    \label{Qn:FanoSimplHasTilting}
    Let $X_\Sigma$ be a toric Fano variety. Is there always a simplicial toric weak Fano variety $X_{\Sigma'}$ such that $\Sigma(1)=\Sigma'(1)$ and that $\mathcal{X}_{\Sigma'}$ admits a tilting complex?
\end{question}

For the remainder of the paper, let us elaborate why we suspect that verifying Conjecture \ref{Conj:FanoVanishingII} provides a significant step towards proving Conjecture \ref{Conj:affinetoric}. To reduce the general case of Gorenstein cones to this, we begin by observing the following, well-known fact. 

\begin{proposition}[=Proposition 2.2 in \cite{HM06}]
    \label{Prop:FaceOfReflexive}
    Let $P$ be a lattice polytope. Then $P$ is lattice equivalent to a face of some reflexive polytope $Q$.
\end{proposition}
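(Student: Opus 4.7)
The strategy is to realize $P$ explicitly as the top face of a reflexive polytope $Q$ built by embedding $P$ at height one in an enlarged lattice and then ``capping off'' below. Let $d=\dim P$ and let $F_1,\dots,F_m$ be the facets of $P$ with primitive inward normals $n_j\in M$ and defining inequalities $\langle n_j,u\rangle\ge -a_j$; after a preliminary lattice translation we may assume $0\in P$. Embed $P$ at height one as $P':=P\times\{1\}\subset\widetilde N_\R$, where $\widetilde N:=N\oplus\Z$. The hyperplane $\{t=1\}$ will be the ``top'' facet of $Q$ and sits at lattice distance one from the origin automatically.

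For each facet $F_j$ of $P$, consider the unique hyperplane of the form $H_j=\{(u,t):\langle n_j,u\rangle+c_j t=1\}$ passing through $F_j\times\{1\}$; the constraint forces $c_j=1+a_j\in\Z$. Since $(n_j,c_j)\in\widetilde M$ is primitive (because $n_j$ is primitive in $M$), the set $H_j\cap\widetilde N$ is a nonempty affine sublattice, and we can pick a lattice point $p_j\in H_j\cap\widetilde N$ with height $t_j<1$, e.g.\ $t_j=-1$ (this reduces to finding $w_j\in N$ with $\langle n_j,w_j\rangle=2+a_j$, solvable because $n_j$ is primitive). Define
$$
Q:=\operatorname{conv}\bigl(P'\cup\{p_1,\dots,p_m\}\cup\{-e_{d+1}\}\bigr).
$$
By construction, $0\in\operatorname{int}(Q)$, $P'$ lies at the maximal height and so is a face of $Q$, and both $\{t=\pm 1\}$ as well as each $H_j$ is a supporting hyperplane at lattice distance exactly one. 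If these are precisely the facets of $Q$, then $Q$ is reflexive and $P\cong P'$ is a face.

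The main obstacle is verifying that no spurious facets arise: combining auxiliary vertices $p_i,p_j$ attached to adjacent facets $F_i,F_j$ could a priori yield a facet of $Q$ with a normal that is not primitive at lattice distance one. The cleanest remedy is to enlarge the ambient lattice to $\widetilde N:=N\oplus\Z^m$ (one new coordinate per facet of $P$), place $P'$ as $P\times\{(1,0,\dots,0)\}$, and take each $p_j$ to sit in its own new coordinate direction, e.g.\ $p_j=(w_j,0,\dots,-1,\dots,0)$ with the $-1$ in the $(j+1)$-st slot and $w_j\in N$ chosen so that the analogous hyperplane $H_j$ passes through both $F_j\times\{(1,0,\dots,0)\}$ and $p_j$. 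Because the $p_j$'s are spread across mutually orthogonal lattice directions, any facet of $Q$ can contain at most one auxiliary vertex $p_j$ and must therefore coincide with one of the prescribed $H_j$'s or with $\{t_1=\pm 1\}$. The technical heart of the proof is precisely this combinatorial/lattice check that the facet structure of $Q$ matches the prescription; the reflexivity of $Q$ and the identification of $P$ with the face $P\times\{(1,0,\dots,0)\}$ then follow immediately.
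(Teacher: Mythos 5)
Your construction does not work, and the gap is not merely the acknowledged ``spurious facets'' check but something more fundamental: the polytope $Q$ you build does not contain the origin in its interior, so it cannot be reflexive regardless of how the facet combinatorics shake out.

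The trouble starts already in the $N\oplus\Z$ version, with a sign/orientation error in the placement of $p_j$. You set $c_j=1+a_j>0$, so along $H_j=\{\langle n_j,u\rangle+c_jt=1\}$ the value $\langle n_j,u\rangle=1-c_jt$ \emph{increases} as $t$ drops. Since $n_j$ is the \emph{inward} normal of $F_j$, moving to height $t=-1$ pushes $w_j$ past the interior of $P$ and out the opposite side; the pair $(F_j\times\{1\},\,p_j)$ then typically spans a diagonal of $Q$, not an edge, and the actual slanted facets of $Q$ lie on the wrong hyperplanes at the wrong distance. A minimal example: take $P=[-1,1]$, so $a_1=a_2=1$, $c_1=c_2=2$, $p_1=(3,-1)$, $p_2=(-3,-1)$. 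Then $Q=\operatorname{conv}((-1,1),(1,1),(3,-1),(-3,-1))$ has slanted edges on $x+t=2$ and $-x+t=2$, at lattice distance $2$; your $H_1,H_2$ are diagonals of $Q$, not faces. (With the correct sign one would aim for facets on $\{\langle n_j,u\rangle+(a_j-1)t=-1\}$, which is a different construction.)

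The ``cleanest remedy'' in $N\oplus\Z^m$ is worse: with $P'$ at $(1,0,\dots,0)$ and $p_j$ having a $-1$ in its own private auxiliary slot $t_j$ (and all other vertices $0$ there), every vertex of $Q$ has $t_j\le0$ for $j\ge2$, so $\{t_j=0\}$ is a supporting hyperplane of $Q$ through the origin and $0\in\partial Q$. In fact one checks directly that $0\notin Q$ at all: any convex combination giving $0$ forces $\lambda_{p_j}=0$ for $j\ge2$, and then the $N$-component forces $-w_1\in P$, contradicting $\langle n_1,w_1\rangle=2+a_1>a_1$. Beyond this, the claim that ``any facet of $Q$ can contain at most one auxiliary vertex $p_j$'' is asserted without proof and is not true in general (placing points in different coordinate directions does not prevent several of them from lying on a common facet — think of the standard simplex). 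Finally, you declare ``by construction $0\in\operatorname{int}(Q)$'' after only arranging $0\in P$, but not every lattice polytope has an interior lattice point, so even the easier containment $0\in\operatorname{int}(Q)$ cannot be guaranteed by translation alone. The proof of Haase--Melnikov proceeds along genuinely different lines (a half-space description with one auxiliary coordinate per facet, with a careful verification that the resulting polytope is a lattice polytope), and that lattice-point check is precisely the work your proposal defers.
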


Thus, we know that any Gorenstein cone $\cone(P\times\{1\})$ appears (embedded into a bigger lattice) as a face of a reflexive Gorenstein cone $\cone(Q\times\{1\})$. We expect that providing an NCCR for the toric algebra associated to $\cone(P\times \{1\})$ helps in constructing an NCCR for the toric algebra associated to $\cone(Q\times \{1\})$, thus reducing Conjecture \ref{Conj:affinetoric} to the case of reflexive Gorenstein cones. 

\begin{conjecture}
    \label{Conj:FaceGivesNCCR}
    Let $Q\subset \R^k$ be a lattice polytope with associated Gorenstein cone $\sigma=\cone(Q\times\{1\})\subset \R^{k+1}$. Suppose $Q$ is lattice equivalent to a face $F$ of a lattice polytope $P\subset \R^n$ such that there exists an NCCR for $R_{\sigma'}=k[(\sigma')^\vee\cap M']$, where $\sigma'=\cone(P\times\{1\})\subset \R^{n+1}$ and $M'$ is the $(n+1)$-dimensional character lattice of $X_{\sigma'}$. Then there exists an NCCR for $R_\sigma=k[\sigma\cap M]$.
\end{conjecture}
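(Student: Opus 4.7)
The natural strategy is to realize $R_\sigma$ by localization of $R_{\sigma'}$ followed by descent along a torus quotient. The key geometric observation is that, because $Q$ is lattice-equivalent to a face $F$ of $P$, the cone $\tau := \cone(F\times\{1\})$ is a face of $\sigma'$ with $\tau = \sigma' \cap m_0^\perp$ for some $m_0 \in (\sigma')^\vee \cap M'$; hence $R_\tau := k[\tau^\vee \cap M']$ is the localization $(R_{\sigma'})_{\chi^{m_0}}$. Since $\tau$ sits in an ambient lattice of rank $n+1$ while spanning only a $(k+1)$-dimensional subspace, the affine toric variety $U_\tau$ splits as $U_\sigma \times (\C^\ast)^{n-k}$, and correspondingly
\[
R_\tau \cong R_\sigma \otimes_k k[\Z^{n-k}].
\]

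The first step is to check that the NCCR $\Lambda' = \End_{R_{\sigma'}}(N')$ localizes to an NCCR of $R_\tau$. Writing $f := \chi^{m_0}$, set $N_\tau := N'_f$ and $\Lambda_\tau := \Lambda'_f$. Standard facts then give that $N_\tau$ is reflexive over $R_\tau$, that $\Lambda_\tau = \End_{R_\tau}(N_\tau)$ (as $\End$ commutes with flat localization on finitely presented modules), that $\gldim \Lambda_\tau \le \gldim \Lambda' < \infty$, and that the MCM condition transfers via $\Ext^i_{R_\tau}(\Lambda_\tau, R_\tau) = \Ext^i_{R_{\sigma'}}(\Lambda', R_{\sigma'})_f = 0$ for $i>0$. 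Hence $\Lambda_\tau$ is an NCCR of $R_\tau$.

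The crucial second step is to descend this NCCR along the torus quotient $\spec R_\tau \to \spec R_\sigma$, where $T'' := (\C^\ast)^{n-k}$ acts on the second factor of $R_\tau = R_\sigma \otimes_k k[\Z^{n-k}]$. The optimistic scenario is to arrange $N_\tau$ in pulled-back form $N_\tau \cong N_\sigma \otimes_k k[\Z^{n-k}]$ for some reflexive $R_\sigma$-module $N_\sigma$; in that case
\[
\End_{R_\tau}(N_\tau) \cong \End_{R_\sigma}(N_\sigma) \otimes_k k[\Z^{n-k}],
\]
and taking the $\Z^{n-k}$-degree-zero part yields $\End_{R_\sigma}(N_\sigma)$, which inherits finite global dimension and the MCM property by the same $\Ext$ argument. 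When $\Lambda'$ is toric (i.e.\ $N'$ is a direct sum of torus-invariant fractional ideals), $N'$ carries a $T_{\sigma'}$-equivariant structure restricting to a $T''$-equivariant structure on $N_\tau$, and the $T''$-weight-zero component provides the desired $N_\sigma$; this handles the cases of most practical interest.

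The main obstacle arises when $\Lambda'$ is not toric, since then $N_\tau$ need not split as a product and naive descent may fail. A more flexible route is to argue categorically via Theorem \ref{Prop:Parttiltingworks}: if $\Lambda'$ is modelled by a (partial) tilting complex $\mathcal{T}'$ on a simplicial stacky resolution $\mathcal{X}_{\Sigma'}$ of $\sigma'$, one restricts $\mathcal{T}'$ to the open substack lying over $\spec R_\tau$, exploits the product decomposition $U_\tau \cong U_\sigma \times T''$ to transfer a tilting complex (after choosing a $T''$-equivariant model) to a simplicial stacky resolution of $\sigma$, and re-applies Theorem \ref{Prop:Parttiltingworks} to obtain the NCCR of $R_\sigma$. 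Thus the conjecture should reduce to the existence of a sufficiently equivariant model for the NCCR on the ambient cone, which is the principal technical difficulty.
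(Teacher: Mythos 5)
The statement you were asked to prove is Conjecture \ref{Conj:FaceGivesNCCR}, which the paper deliberately leaves open; the only accompanying text is a remark that ``progress has been made towards proving this conjecture and will be the subject of upcoming work.'' There is therefore no proof in the paper against which to compare your attempt.

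Assessed on its own merits, your sketch correctly identifies the right geometric reduction and, importantly, the right obstruction. The observation that $\tau := \cone(F\times\{1\})$ is a face of $\sigma'$, so that $R_\tau := k[\tau^\vee\cap M']$ is the localization $(R_{\sigma'})_{\chi^{m_0}}$ and $R_\tau\cong R_\sigma\otimes_k k[\Z^{n-k}]$, is correct, and your first step (localizing the given NCCR to an NCCR of $R_\tau$) is standard and sound: reflexivity, the description $\Lambda_\tau=\End_{R_\tau}(N_\tau)$, the bound $\gldim\Lambda_\tau\le\gldim\Lambda'$, and the Cohen--Macaulay condition via $\Ext^i_{R_\tau}(\Lambda_\tau,R_\tau)=\Ext^i_{R_{\sigma'}}(\Lambda',R_{\sigma'})_f$ all pass through flat localization of finitely generated modules over a Noetherian ring. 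The second step, however, is not carried out. Torus descent from $R_\tau$ to $R_\sigma$ works when $N_\tau$ is of the pulled-back form $N_\sigma\otimes_k k[\Z^{n-k}]$ (e.g.\ when $N'$ is toric), but you concede there is no reason for a general NCCR to admit such an equivariant model, and the alternative categorical route through Theorem \ref{Prop:Parttiltingworks} leaves unverified precisely the hard points: that restricting a (partial) tilting complex on $\mathcal{X}_{\Sigma'}$ to the open substack over $\spec R_\tau$ yields a $T''$-equivariant model, and that this pushes down to a partial tilting complex with finite-global-dimension endomorphisms on a simplicial stacky refinement of $\sigma$. Your own closing sentence, that the conjecture ``should reduce to the existence of a sufficiently equivariant model for the NCCR on the ambient cone,'' names the missing idea exactly; it is the reason the paper states the result only as a conjecture. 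What you have is a plausible and well-organized plan of attack, not a proof.
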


\begin{remark}
    Progress has been made towards proving this conjecture and will be the subject of upcoming work. 
\end{remark}

To conlcude this paper, we elaborate how positively answering Question \ref{Qn:FanoSimplHasTilting} and the pair of conjectures \ref{Conj:FanoVanishingII} and \ref{Conj:FaceGivesNCCR} provides a proof of Van den Bergh's conjecture (Conjecture \ref{Conj:affinetoric}). This will thus provide a guide for future research.

\begin{proposition}
    \label{Thm:IFQnandConjHAVEvdB}
    Given Conjecture \ref{Conj:FanoVanishingII} and \ref{Conj:FaceGivesNCCR}, if the answer to Question \ref{Qn:FanoSimplHasTilting} is positive, then for any Gorenstein cone $\sigma$, $R=k[\sigma^\vee\cap M]$ admits an NCCR.
\end{proposition}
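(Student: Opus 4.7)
The plan is to reduce Conjecture \ref{Conj:affinetoric} to the reflexive Gorenstein case via an embedding, and then dispose of the reflexive case using the three stated ingredients. Starting with an arbitrary Gorenstein cone $\sigma$, I would, after a suitable change of basis, write $\sigma = \cone(P \times \{1\})$ for some lattice polytope $P$. By Proposition \ref{Prop:FaceOfReflexive}, $P$ is lattice equivalent to a face of some reflexive lattice polytope $Q$, giving an enclosing reflexive Gorenstein cone $\sigma' = \cone(Q \times \{1\})$. The strategy is to produce an NCCR for $R_{\sigma'}$ and then transfer it back to $R_\sigma$.

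To produce the NCCR for $R_{\sigma'}$, I would apply the positive answer to Question \ref{Qn:FanoSimplHasTilting} to the toric Fano $X_{\Sigma_Q}$ whose face fan has primitive ray generators equal to the vertices of $Q$. This yields a simplicial (weak Fano) toric variety $X_{\Sigma'}$ with $\Sigma'(1) = \Sigma_Q(1)$ such that $\mathcal{X}_{\Sigma'}$ admits a tilting complex. Because $\Sigma'$ is simplicial with primitive ray generators exactly the vertices of the reflexive polytope $Q$, Conjecture \ref{Conj:FanoVanishingII} applies (with its $P$ playing the role of $Q$ and its $\Sigma$ playing the role of $\Sigma'$) and yields an NCCR for $R_{\sigma'} = k[|\cone(Q \times \{1\})|^\vee \cap M']$. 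Finally, I would invoke Conjecture \ref{Conj:FaceGivesNCCR} with the roles chosen so that its $Q$ is our $P$ (the face) and its $P$ is our $Q$ (the reflexive polytope). Since $P$ is lattice equivalent to a face of $Q$ and $R_{\sigma'}$ admits an NCCR, the conjecture delivers an NCCR for $R_\sigma = k[\sigma^\vee \cap M]$, completing the argument.

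The proposition itself is therefore a short chain of implications once all three inputs are granted; the only subtlety is keeping track of which polytope plays the role of the face and which of the reflexive ambient in each invocation. The genuine difficulty, of course, lies entirely in the ingredients: Question \ref{Qn:FanoSimplHasTilting} calls for tilting objects on simplicialisations of singular toric Fanos beyond the $\operatorname{rk}\operatorname{Pic} \le 2$ range of \cite{BH09}; Conjecture \ref{Conj:FanoVanishingII} demands the vanishing of $H^{>0}(\mathcal{X}_\Sigma, \mathcal{T}^\vee \otimes \mathcal{T} \otimes (\omega_{\mathcal{X}_\Sigma}^\vee)^{\otimes l})$ for arbitrary tilting $\mathcal{T}$ rather than the specific exceptional collections of Borisov--Hua; and Conjecture \ref{Conj:FaceGivesNCCR} requires a face-restriction mechanism producing reflexive endomorphism modules over a facial Gorenstein subcone from those over an ambient reflexive cone. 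I expect the last of these to be the main obstacle within the broader strategy, since it is the place where truly new machinery (presumably a combination of VGIT, semi-orthogonal decompositions along facial loci, and reflexive pullback) is needed to relate the two distinct combinatorial settings.
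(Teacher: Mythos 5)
Your argument is correct and follows exactly the same chain of reductions as the paper's own proof: embed $\sigma$ as a face of a reflexive Gorenstein cone via Proposition \ref{Prop:FaceOfReflexive}, obtain a tilting complex on a simplicialisation via the positive answer to Question \ref{Qn:FanoSimplHasTilting}, produce an NCCR for the ambient reflexive cone via Conjecture \ref{Conj:FanoVanishingII}, and transfer it to $R_\sigma$ via Conjecture \ref{Conj:FaceGivesNCCR}. The only difference is a harmless swap of which polytope you name $P$ versus $Q$, which you track correctly.
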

\begin{proof}
Write the cone $\sigma$ as $\sigma=\cone(Q\times\{1\})$. By Proposition \ref{Prop:FaceOfReflexive}, there exists a reflexive polytope $P$ such that $Q$ is lattice equivalent to a face $F$ of $P$. Write $\sigma'=\cone(P\times\{1\})$. Since we assume the answer to Question \ref{Qn:FanoSimplHasTilting} to be positive, we have a simplicial fan $\Sigma'$ with a tilting complex on $\mathcal{X}_{\Sigma'}$ such that the vertices of $P$ correspond to the primitive generators of the rays $\rho\in \sigma'(1)$. By Conjecture \ref{Conj:FanoVanishingII}, there is an NCCR $\Lambda'$ of $R_{\sigma'}=k[(\sigma')^\vee\cap M']$ where $M'$ is the character lattice of $X_{\sigma'}$. Applying Conjecture \ref{Conj:FaceGivesNCCR} gives the desired NCCR $\Lambda$ of $R_\sigma=k[\sigma^\vee\cap M]$.
\end{proof}

\bibliography{Bib}
\end{document}